\documentclass{amsart}
\usepackage[utf8]{inputenc}
\usepackage{amsmath}
\usepackage{amsfonts}
\usepackage{amssymb}
\usepackage{float}
\usepackage{tikz}
\usepackage[colorlinks=true]{hyperref}
\hypersetup{urlcolor=blue, citecolor=red}
\usepackage{hyperref}

\newtheorem{theorem}{Theorem}[section]
\newtheorem{corollary}[theorem]{Corollary}

\newtheorem{lemma}[theorem]{Lemma}
\newtheorem{proposition}[theorem]{Proposition}

\theoremstyle{definition}

\newtheorem{remark}[theorem]{Remark}

\newcommand{\R}{\mathbb{R}}
\newcommand{\N}{\mathbb{N}}

\newcommand{\T}{\mathbb{T}}
\newcommand{\Z}{\mathbb{Z}}

\begin{document}

\title[Quasi-periodic quasilinear equations]{Low-regularity well-posedness for dispersive equations with derivative nonlinearity and quasi-periodic initial data}

\author{Robert Schippa}
\email{rschippa@uni-bonn.de}
\address{Mathematical Institute of the University of Bonn, Endenicher Allee 60, D-53115 Bonn, Germany}

\keywords{Quasi-periodic data, bilinear square function estimates, short-time Fourier restriction, KdV equation, Benjamin-Ono equation}

\makeatletter
\@namedef{subjclassname@2020}{%
  \textup{2020} Mathematics Subject Classification}
\makeatother

\subjclass[2020]{Primary 42B37. Secondary 35Q53, 37C55.}

\begin{abstract}
We show low-regularity well-posedness of the Korteweg-de Vries equation with spatially quasi-periodic initial data. To this end, we employ frequency-dependent time localization and a bilinear version of the C\'ordoba--Fefferman square function estimate to show a bilinear Strichartz estimate for quasi-periodic functions. The solutions are proved to preserve the Sobolev regularity of the initial data, which was not the case in earlier works. The argument extends to other dispersion relations and higher order nonlinearities.
\end{abstract}

\maketitle

\section{Introduction}

The low-regularity well-posedness of nonlinear dispersive equations with quasi-periodic initial data has recently undergone a flurry of developments. The works of Damanik \emph{et al.} and Xu \cite{DamanikLiXu2024,Xu2025} (see also Papenburg \cite{Papenburg2025GeneralLWP}) rely on estimates for Picard iterates or energy arguments to prove well-posedness results, which do not take advantage of dispersive effects. In recent work of the author \cite{Schippa2025Quasiperiodic} it was pointed out how square function and decoupling arguments yield sharp Strichartz estimates and local well-posedness results for semilinear dispersive equations with quasi-periodic initial data. By these means, the sharp (up to endpoints) local well-posedness of the cubic NLS was obtained in \cite{Schippa2025Quasiperiodic}. Furthermore, in the works \cite{DamanikLiXu2024,Xu2025} the obtained solutions were constructed with weaker Fourier decay than the initial data. The solutions constructed in \cite{Schippa2025Quasiperiodic} preserved the initial regularity.

\smallskip

Here we consider dispersive PDE with quasi-periodic initial data and derivative nonlinearity. We focus our attention on Sobolev spaces based on the Besicovitch-$2$-almost periodic functions, which are limits of trigonometric polynomials on the real line with respect to a spatially averaged $L^2$-norm.
We analyze the KdV equation in detail, which reads
\begin{equation}
\label{eq:KdVIntro}
\left\{ \begin{array}{cl}
\partial_t u + \partial_x^3 u &= u \partial_x u, \quad (t,x) \in \R \times \R, \\
u(0) &= u_0 \in H^s_{\Lambda}(\R).
\end{array} \right.
\end{equation}
We consider real-valued solutions. The argument applies more generally to dispersive equations with different dispersion relation and general power-type derivative nonlinearities. Examples include the (generalized) Benjamin-Ono equation and the generalized KdV equation. We consider the Benjamin-Ono equation below.
Function spaces for quasi-periodic functions are explained in detail in Section \ref{section:Prelim}. 

The KdV equation has been extensively studied, also for quasi-periodic initial data. In \cite{BinderDamanikGoldsteinLukic2018} global solutions were constructed for special quasi-periodic analytic initial data, verifying the Deift conjecture for this class of initial data. The Deift conjecture claims global existence of almost-periodic in time solutions for spatially almost-periodic initial data. This was disproved in its full generality in \cite{ChapoutoKillipVisan2024}.

The Sobolev spaces under consideration $H^s_{\Lambda}$ for quasi-periodic functions with spectrum in $\Lambda= \omega_1 \Z + \ldots + \omega_{\nu} \Z$, $\omega \in \R_{>0}^{\nu}$ being rationally independent, are isomorphic to $H^s(\T^{\nu})$ via the linear mapping $H^s(\T^{\nu}) \to H^s_{\Lambda}$ given by $f \mapsto f(\omega \cdot x)$ for $s \geq 0$. Let $\partial_{\omega} := \omega \cdot \nabla $. The isomorphism transforms the equation with quasi-periodic data to the following degenerate KdV equation on $\T^{\nu}$:
\begin{equation}
\label{eq:DegenerateKdVIntro}
\left\{ \begin{array}{cl}
\partial_t u + \partial_{\omega}^3 u &= u \partial_{\omega} u, \quad (t,x) \in \R \times \T^{\nu}, \\
u(0) &= u_0 \in H^s(\T^{\nu}).
\end{array} \right.
\end{equation}
By energy arguments, this equation is locally well-posed for $s>1+\frac{\nu}{2}$. For the Benjamin-Ono equation this argument is detailed by Aitzhan--Ambrose in \cite{AitzhanAmbrose2024}.

 The constructed solutions in \cite{BinderDamanikGoldsteinLukic2018} were not proved to satisfy the same exponential Fourier decay as the initial data. Tsugawa \cite{Tsugawa2012} proved local well-posedness via the contraction mapping principle, imposing a weight on the low frequencies. He moreover argued \cite[Proposition~5.1,~Remark~5.2]{Tsugawa2012} that with the outset of \cite{Tsugawa2012} solving the KdV initial value problem for quasi-periodic data is not possible via the contraction mapping principle without imposing a low frequency weight. 
The following is our main result:
\begin{theorem}
\eqref{eq:KdVIntro} is locally well-posed for initial data in $H^{s}_{\Lambda}$ for $s>1+\frac{\nu-1}{2} $.
\end{theorem}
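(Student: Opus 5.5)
We work on the torus side: by the isomorphism $H^s(\T^{\nu})\cong H^s_\Lambda$ it suffices to show local well-posedness of \eqref{eq:DegenerateKdVIntro} in $H^s(\T^\nu)$ for $s>1+\frac{\nu-1}{2}$. Since Tsugawa's obstruction rules out a plain contraction argument in Bourgain spaces, we follow the short-time Fourier restriction approach of Ionescu--Kenig--Tataru and Koch--Tataru. For a dyadic frequency $N$ (measured in $|\omega\cdot k|$, with $k\in\Z^\nu$ the torus frequency) we localize in time to intervals of length $T(N)=N^{-1}$. We then define short-time $X^{s,b}$-type spaces $F^s_T$, their nonlinear counterparts $N^s_T$, and energy spaces $E^s_T$. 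The proof reduces to three estimates on $[0,T]$, $T\le 1$:
\begin{align*}
\|u\|_{F^s_T}&\lesssim \|u\|_{E^s_T}+\|\partial_\omega(u^2)\|_{N^s_T},\\
\|\partial_\omega(uv)\|_{N^s_T}&\lesssim \|u\|_{F^s_T}\|v\|_{F^s_T},\\
\|u\|_{E^s_T}^2&\lesssim \|u_0\|_{H^s}^2+T^{\theta}\|u\|_{F^s_T}^3.
\end{align*}
We also need the analogous energy estimate for differences of solutions at regularity $s-1$ or $0$, combined with a frequency envelope argument. Together with a continuity argument, the a priori bounds for smooth solutions (which exist for $s>1+\frac{\nu}{2}$ by energy methods) and the difference estimates give existence, uniqueness and continuous dependence. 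The solution stays in $C([0,T],H^s)$, so the regularity of the data is preserved.

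The linear estimate is standard. The nonlinear and energy estimates rest on a bilinear Strichartz estimate for quasi-periodic free solutions on time intervals of length $N^{-1}$. Take $u_{N_1}$ and $v_{N_2}$ with $N_2\ll N_1$, localized to $|\omega\cdot k|\sim N_i$ with $|k|\lesssim N_i$. The resonance function for KdV is $3\xi_1\xi_2(\xi_1+\xi_2)$ with $\xi_i=\omega\cdot k_i$. On a time interval of length $N_1^{-1}$ the space-time Fourier support splits into slabs of thickness $\sim N_1$ in the modulation variable. Inside such a slab we count lattice points $k\in\Z^\nu$ with $|k|\lesssim N_1$ on which the quantity $\omega\cdot k$ varies by $O(1)$. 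This count is controlled by the bilinear Córdoba--Fefferman square function estimate: we decompose into strips transversal to $\omega$ and exploit the transversality of the two curves $\xi\mapsto\xi^3$ at separated frequencies. The outcome we aim for is
\[
\|P_{N_1}u\,P_{N_2}v\|_{L^2_{t,x}([0,N_1^{-1}]\times\T^\nu)}\lesssim N_1^{-1}\,N_2^{\frac{\nu-1}{2}+\varepsilon}\|u\|_{L^2}\|v\|_{L^2}.
\]
Heuristically, the $N_2^{(\nu-1)/2}$ factor comes from the $\nu-1$ directions orthogonal to $\omega$, where no dispersion is present. This is exactly where the threshold $1+\frac{\nu-1}{2}$, gaining half a derivative in one direction over $1+\frac{\nu}{2}$, should come from.

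With the bilinear estimate in hand, the nonlinear estimate follows by the usual case analysis: high$\times$low$\to$high, high$\times$high$\to$low (where we sum over $N_1/N$ subintervals), and low$\times$low. The derivative loss $N$ in the output is compensated by the $N_1^{-1}$ gain together with the $N_2^{(\nu-1)/2+\varepsilon}$ loss, which is summable against $\|v\|_{H^s}$ for $s>1+\frac{\nu-1}{2}$. The energy estimate uses the standard symmetrization: the high$\times$high$\times$low commutator term transfers the derivative onto the low frequency, giving $N_2\cdot N_2^{(\nu-1)/2}$. This is summable precisely when $s>1+\frac{\nu-1}{2}$, and a modified energy with a correction term handles the boundary cases. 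The difference estimate is done at a lower regularity where the symmetrization is unavailable; it too requires a correction term.

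I expect the main obstacle to be the bilinear Strichartz estimate on frequency-dependent time intervals. The lattice point counting for $\omega\cdot k$ in thin slabs has to be done uniformly and without any Diophantine condition on $\omega$; only rational independence is assumed. I would first try the bilinear square function estimate in the variables $(\omega\cdot k,\,t)$, treating the orthogonal directions trivially via Cauchy--Schwarz, since this directly produces the factor $N_2^{(\nu-1)/2}$.
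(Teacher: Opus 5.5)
\textbf{Gap: the central bilinear estimate is false.} Your overall architecture matches the paper's: short-time spaces, linear, nonlinear and energy estimates, $L^2$-level difference estimates, frequency envelopes, and a bilinear C\'ordoba--Fefferman estimate in the tangential variable with trivial counting in the $\nu-1$ transverse directions. The problem is the quantitative outcome you aim for. On a time interval of length $T$ no bilinear estimate can beat $T^{1/2}$. Free evolutions of plane waves $e^{ik\cdot x}$ have modulus one, so for $u=e^{ik_1\cdot x}$, $v=e^{ik_2\cdot x}$ the left-hand side is comparable to $T^{1/2}\|u\|_{L^2}\|v\|_{L^2}$.

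Take $k_2$ fixed and $k_1=me_1$ with $m\omega_1\sim N_1$, so that $|k_1|\sim N_1$ lies within your localization. Then your bound $N_1^{-1}N_2^{\frac{\nu-1}{2}+\varepsilon}$ on $[0,N_1^{-1}]$ fails as $N_1\to\infty$. The true estimate (Proposition~\ref{prop:ShorttimeBilinearEstimateAiry}) is $T^{1/2}M_{\min}^{\frac{\nu-1}{2}}$ for $N^{-2}\le T\le N^{-1}$. The square function estimate only reduces to tangential intervals of length $T^{-1}N^{-2}\le 1$, so the Euclidean-type gain $N_1^{-1}$ appears only at $T\sim N_1^{-2}$.

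\textbf{The loss is in the height, not in $N_2$.} At tangential frequency $|\omega\cdot k|\sim N$ one only has $|k|\gtrsim N$. So restricting to $|k|\lesssim N_i$ is not without loss of generality. The number of $k$ with $|k|\lesssim M$ and $\omega\cdot k$ in a unit interval grows like $M^{\nu-1}$, with $M\sim\langle k\rangle$ the \emph{height}. A separate height localization $R_M$, as in the paper, is indispensable.

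\textbf{Consequence for the time scale $T(N)=N^{-1}$.} With the correct estimate, the trilinear bound on an interval of length $N^{-1}$ is $N^{-1}M_{\min}^{\frac{\nu-1}{2}}$, which exactly absorbs the derivative. The nonlinear estimate then carries no power of $T$, which is how you in fact wrote it. But then
$$\|u\|_{F}\lesssim\|u_0\|_{H^s}+T^{\theta/2}\|u\|_F^{3/2}+\|u\|_F^2$$
closes only for small data. This is exactly the situation of the paper's Benjamin--Ono result, which uses $T(N)=N^{-1}$ and is a small-data theorem.

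\textbf{How the paper gets general data.} For the general-data KdV statement the paper takes $T(N)=N^{-3/2}$; any $N^{-\alpha}$ with $1<\alpha<2$ works. These intervals are still longer than $N^{-2}$, so Proposition~\ref{prop:TrilinearEstimate} applies, and $N\min(T,N^{-3/2})\lesssim T^{1/4}$ supplies the smallness in the nonlinear estimate.

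\textbf{The energy estimate needs an anisotropic argument, not plain symmetrization.} There is a term with the low tangential frequency at high height and the high tangential frequency at low height $M'$. It cannot be symmetrized by integration by parts. The paper bounds the derivative by $N\lesssim M'$ and sums $(M')^{\frac{\nu+1}{2}}$. The commutator coming from the height cutoff only yields $M'/M$, not $N'/N$. No modified energy is needed. The threshold $s>1+\frac{\nu-1}{2}$ comes from summing $N'(M')^{\frac{\nu-1}{2}}$ with $N'\lesssim M'$, i.e.\ in terms of height rather than your $N_2\cdot N_2^{\frac{\nu-1}{2}}$.
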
 
This appears to be the first local well-posedness result for the KdV equation with quasi-periodic initial data, in which the solutions are proved to be as regular as the initial data without weight on the low frequencies, and which does not follow from energy arguments. Local well-posedness is understood in the Hadamard sense, i.e., existence, uniqueness, and continuous dependence of the solutions on the initial data, which is here the case in the regularity of the initial data.

\smallskip

In the present paper we show a novel low-regularity well-posedness result using dispersive properties. However, we cannot expect to solve the KdV equation via the contraction mapping principle - as was the case for NLS \cite{Schippa2025Quasiperiodic}. 
The reason is the absence of smoothing, already in the quasi-periodic setting and resonant frequencies, which are absent in the periodic setting. We remark that endpoint and long-time Strichartz estimates were recently reported in \cite{Inami2026}.

Bourgain \cite{Bourgain1993KdV} showed how to overcome the derivative loss for the KdV equation on the torus by resonance considerations in Fourier restriction spaces. The resonance function reads
\begin{equation*}
\Omega_{\T}(k_1,k_2) = (k_1+k_2)^3 -k_1^3 - k_2^3 = 3 k_1 k_2(k_1+k_2)
\end{equation*}
and for $|k_1| \geq 2|k_2|$ we have $|\Omega_{\T}(k_1,k_2)| \sim |k_1|^2 |k_2|$. Fourier restriction norms allow to recover a factor of $|\Omega_{\T}|^{-\frac{1}{2}}$, which mitigates the derivative loss, and allowed for solving the KdV equation via the contraction mapping in Sobolev spaces $H^{s}(\T)$, $s\geq - \frac{1}{2}$ (cf. \cite[Theorem~1.8]{KenigPonceVega1996}).
The analysis in the genuine quasi-periodic case sharply diverges from the periodic case, as the resonance function can become arbitrarily small for high frequencies:
\begin{equation*}
\Omega(k_1,k_2) = ((k_1+k_2) \cdot \omega)^3 - (k_1 \cdot \omega)^3 - (k_2 \cdot \omega)^3 = 3( (k_1+k_2)\cdot \omega) (k_1 \cdot \omega) (k_2 \cdot \omega).
\end{equation*}
Carrying out Bourgain's argument for quasi-periodic functions, Tsugawa incurred the low frequency weight.
Here we use frequency-dependent time localization to ameliorate the derivative loss. This is a robust approach to treat quasilinear dispersive equations, as well on the real line as on tori - as the behavior of linear solutions after frequency-dependent time localization becomes independent of the ambient geometry. The seminal work \cite{IonescuKenigTataru2008} combined Fourier restriction and frequency-dependent time localization to show a quasilinear local well-posedness result; we also refer to  \cite{BurqGerardTzvetkov2004,KochTzvetkov2003,KochTataru2007} with the list of references being merely exemplary. 

\smallskip

The versatility of frequency-dependent time localization to treat quasilinear equations on non-Euclidean geometries was emphasized in the PhD thesis of the author \cite{ShorttimeFourierTransformRestriction}. The present work is the first treating quasi-periodic problems with frequency-dependent time localization. We extend the arguments in \cite{Schippa2025Quasiperiodic,ShorttimeFourierTransformRestriction}. 

For periodic solutions to the Airy equation with separated frequencies $N_2 \leq N_1/8$, the bilinear Strichartz estimate on frequency-dependent times, which resembles the estimate on the real line, reads
\begin{equation*}
\| e^{t \partial_x^3} P_{N_1} u_1 e^{t \partial_x^3} P_{N_2} u_2 \|_{L^2_{t,x}([0,N_1^{-2}] \times \T)} \lesssim N_1^{-1} \| u_1 \|_{L^2(\T)} \| u_2 \|_{L^2(\T)}.
\end{equation*}
This is covered in the paper by Moyua--Vega \cite{MoyuaVega2008}. 

Estimates on Euclidean space are easier to obtain, Bourgain \cite{Bourgain1998} recorded an instance for the Schr\"odinger equation.
Then, Hani \cite{Hani2012} extended the estimate to general manifolds. In \cite{Schippa2025Trilinear} the Schr\"odinger estimate was recovered by $\ell^2$-decoupling arguments. We shall use the latter to extend the argument to quasi-periodic functions and other dispersion relations as well. The cornerstone is a bilinear refinement of the C\'ordoba--Fefferman square function estimate \cite{Fefferman1973,Cordoba1979,Cordoba1982}.

\smallskip

As remarked earlier, the method applies to the Benjamin-Ono equation as well:
\begin{equation}
\label{eq:BOIntroduction}
\left\{ \begin{array}{cl}
\partial_t u + \mathcal{H} \partial_x^2 u &= u \partial_x u, \quad (t,x) \in \R \times \R, \\
u(0) &= u_0 \in H_{\Lambda}^s(\R).
\end{array} \right.
\end{equation}
Energy arguments yield again local well-posedness for $s > 1 + \frac{\nu}{2}$ \cite{AitzhanAmbrose2024}. For simplicity of exposition, we show a well-posedness result for small data on the unit time scale:
\begin{theorem}
\label{thm:QPBenjaminOno}
Let $s>\frac{\nu+1}{2}$. There is $c>0$ such that for any $u_0 \in H^s_{\Lambda}$ with $\| u_0 \|_{H^s_\Lambda} \leq c$ there is a unique solution $u \in F^s(T) \hookrightarrow C([0,1],H^s_{\Lambda})$ and continuous dependence of the solution on the initial data in $H^s_\Lambda$.
\end{theorem}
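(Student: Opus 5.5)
We prove Theorem \ref{thm:QPBenjaminOno} with the short-time Fourier restriction method of Ionescu--Kenig--Tataru. We work on $\T^\nu$ via the isomorphism $H^s_\Lambda \simeq H^s(\T^\nu)$, with dispersion relation $\varphi(k) = -(k\cdot\omega)|k\cdot\omega|$. We localize a frequency $N$ piece (measured by $|k\cdot\omega|\sim N$ together with $|k|$ for the Sobolev weight) to time intervals of length $N^{-1}$. This is the natural scale for Benjamin--Ono, since the derivative loss in $u\partial_x u$ is one power of $N$. The short-time $X^{s,b}$-type space $F^s(1)$, its nonlinear companion $N^s(1)$ and the energy space $B^s(1)$ are defined in the usual way. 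We also use the embedding $F^s(1)\hookrightarrow C([0,1],H^s_\Lambda)$. The proof reduces to three estimates for $T\le 1$:
\begin{equation*}
\|u\|_{F^s(T)} \lesssim \|u\|_{B^s(T)} + \|u\partial_x u\|_{N^s(T)}, \qquad \|\partial_x(uv)\|_{N^s(T)} \lesssim \|u\|_{F^s(T)}\|v\|_{F^s(T)},
\end{equation*}
and the energy estimate $\|u\|_{B^s(T)}^2 \lesssim \|u_0\|_{H^s}^2 + \|u\|_{F^s(T)}^3$. The first is the standard linear estimate and is independent of geometry. A continuity argument for small data then gives a priori bounds on $[0,1]$. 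Existence follows by approximating with smooth solutions from the energy method ($s>1+\nu/2$). Uniqueness and continuous dependence follow from the same estimates for differences at regularity $\bar s<s$ (say $\bar s=0$-level plus $s$-level with frequency envelopes, à la Bona--Smith).

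The key ingredient is a bilinear Strichartz estimate on frequency-dependent times for quasi-periodic functions. For $N_2\ll N_1$ and time intervals of length $N_1^{-1}$, we want
\begin{equation*}
\|e^{-t\mathcal{H}\partial_\omega^2}P_{N_1}u_1\, e^{-t\mathcal{H}\partial_\omega^2}P_{N_2}u_2\|_{L^2_{t,x}([0,N_1^{-1}]\times\T^\nu)} \lesssim N_1^{-\frac12} N_2^{\frac{\nu-1}{2}+\varepsilon}\|u_1\|_{L^2}\|u_2\|_{L^2}.
\end{equation*}
We prove it as in the KdV case via the bilinear Córdoba--Fefferman square function estimate. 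After rescaling, on the time scale $N_1^{-1}$ the frequencies $k\cdot\omega$ in the high-frequency block live on separated pieces of the parabola $\xi\mapsto\xi|\xi|$. The transversality between the high and low blocks (derivative gap $\sim N_1$) lets us decouple the $N_2$-block into unit intervals in $\xi=k\cdot\omega$. We then count the lattice points $k\in\Z^\nu$ with $|k|\lesssim N_2$ and $k\cdot\omega$ in a unit interval. There are $\lesssim N_2^{\nu-1}$ of these, which produces the factor $N_2^{(\nu-1)/2}$.

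The nonlinear estimate follows from this. For the high$\times$low$\to$high interaction we lose $N_1$ from the derivative and gain $N_1^{-1/2}$ from the bilinear estimate. We also gain $N_1^{-1/2}$ because time localization at scale $N_1^{-1}$ gives modulation $\gtrsim N_1$. This leaves a factor $N_2^{(\nu-1)/2+\varepsilon}$, which is summable against $N_2^{s}$ precisely when $s>\frac{\nu-1}{2}$. The high$\times$high$\to$low interactions require splitting the output into $N_1/N_{\text{low}}$ time intervals. Here the low frequency output is controlled by Bernstein in $\nu$ variables ($N^{\nu/2}$), which forces the threshold $s>\frac{\nu+1}{2}$.

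The main obstacle is the energy estimate. There the commutator structure removes one derivative in the high$\times$low interaction, leaving $\int P_{N_1}u\, P_{N_1}u\, \partial_x P_{N_2}u$. We must bound this on $[0,T]$ by splitting into $N_1 T$ intervals of length $N_1^{-1}$. Each interval costs the bilinear estimate squared, giving $N_1\cdot N_1^{-1}N_2\cdot N_2^{\nu-1}\cdot$(norms), which is summable for $s>\frac{\nu}{2}$. The delicate points are the boundary terms from the sharp time cutoffs, handled in the standard way. The other delicate point is the resonant set where $|k_2\cdot\omega|$ is tiny but $|k_2|$ is large. In that case the smallness of $k_2\cdot\omega$ in $\partial_x$ compensates, and the lattice count must be done in $|k_2|$ rather than in $|k_2\cdot\omega|$. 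I expect the final threshold $s>\frac{\nu+1}{2}$ to be dictated by this energy estimate together with the high$\times$high interactions.
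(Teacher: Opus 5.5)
Your architecture matches the paper's: frequency-dependent time localization $T(N)=N^{-1}$, a bilinear C\'ordoba--Fefferman estimate with lattice counting, small data on $[0,1]$, and $\mathcal{L}^2$-level differences with frequency envelopes. There is, however, a genuine gap at the one genuinely quasi-periodic point. The tangential frequency $|k\cdot\omega|$, which defines $P_N$ and the time scale, does not control the height $|k|$, which carries the Sobolev weight.

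\textbf{The bilinear estimate.} Your lattice count ``$|k|\lesssim N_2$ and $k\cdot\omega$ in a unit interval'' presupposes exactly this control, and the bilinear estimate with constant $N_1^{-1/2}N_2^{\frac{\nu-1}{2}+\varepsilon}$ is false. Let $S$ be the $\sim M^{\nu-1}$ lattice points of height $\le M$ with $k\cdot\omega$ in a fixed interval of length $\frac{1}{10}$. Let $u_1$ and $u_2$ be coherent sums over $k_1^0+S$ and $k_2^0-S$. On $[0,N_1^{-1}]$ the phases stay coherent, and the product has norm $\gtrsim N_1^{-1/2}M^{\frac{\nu-1}{2}}\|u_1\|\|u_2\|$ with $M$ arbitrary, independent of $N_2$. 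The correct statement (Proposition \ref{prop:ShorttimeBilinearBO}) localizes heights with $R_M$. It then splits the larger-height factor almost orthogonally into cubes of side $M_{\min}$, and its constant is $N^{-1/2}M_{\min}^{\frac{\nu-1}{2}}$.

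\textbf{High$\times$high$\to$low.} This step is also off, for the same reason. There is no Bernstein inequality at tangential frequency $N_{\mathrm{low}}$ without a height bound. None is needed either: split into $N_{\mathrm{high}}/N_{\mathrm{low}}$ intervals and apply Proposition \ref{prop:TrilinearEstimateProp} on each, which gives $N_{\mathrm{low}}\cdot\frac{N_{\mathrm{high}}}{N_{\mathrm{low}}}\cdot N_{\mathrm{high}}^{-1}M_{\min}^{b/2}$. So the nonlinear estimate closes for $s>\frac{\nu-1}{2}$ and does not produce the threshold.

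\textbf{The energy estimate.} The threshold $s>\frac{\nu+1}{2}=\frac{b}{2}+1$ comes entirely from the energy estimate, where your argument has two problems.

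First, ``bilinear estimate squared'' does not fit a trilinear form. On each interval of length $N^{-1}$ the trilinear estimate gives $N'\cdot N^{-1}M_{\min}^{b/2}$. After $\sim N$ intervals the symmetrized term therefore costs $N'(M')^{b/2}\lesssim (M')^{1+b/2}$, using $N'\lesssim M'$. This requires $s>\frac{\nu+1}{2}$, not $s>\frac{\nu}{2}$.

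Second, and more seriously, the reduction to $\int P_{N_1}u\,P_{N_1}u\,\partial_x P_{N_2}u$ fails exactly on the set you flag. That is the term $P_NR_M\partial_x(P_{\ll N}u\cdot R_{\ll M}u)$, where the low-tangential factor has height $\gtrsim M$ and the high-tangential factor has height $M'\ll M$.
\begin{itemize}
\item The height weight does not commute with multiplication by a function of height $\gtrsim M$; in fact $P_NR_M$ annihilates $R_{M'}u$.
\item The two high-tangential factors $P_NR_Mu$ and $\tilde P_NR_{M'}u$ are different functions.
\item Hence there is no symmetrization, the derivative stays at tangential frequency $N$, and the smallness of $k_2\cdot\omega$ never enters.
\end{itemize}
The missing idea, as in the proof of Proposition \ref{prop:EnergyEstimates}, is that $N\lesssim M'$. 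The high-tangential factor pays for the derivative with its own height, giving $N\,(M')^{b/2}\le (M')^{1+b/2}$ against its weight $(M')^{s}$. This is precisely where $s>\frac{b}{2}+1$ is needed. Without it your energy estimate is incomplete, and your derivation of the threshold is incorrect even though the final number matches.
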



Already for decaying data in standard $L^2$-Sobolev spaces $H^s(\R)$ the Benjamin-Ono equation is notorious for being insoluble via the contraction mapping principle \cite{MolinetSautTzvetkov2001,KochTzvetkov2005}. The scaling critical regularity is $s_c = -\frac{1}{2}$. A breakthrough result was the employment of a gauge transformation reminiscent of the Cole-Hopf transform, which goes back to Tao \cite{Tao2004}. We refer to \cite{KochTzvetkov2003,IonescuKenig2007,BurqPlanchon2008,
Molinet2008} for further reading.

Finally, Killip--Laurens--Vi\c{s}an \cite{KillipLaurensVisan2024} proved the sharp well-posedness for regularities $s > -\frac{1}{2}$ on the real line by using the complete integrability. This was preceded by the sharp result on the torus due to G\'erard--Kappeler--Topalov \cite{GerardKappelerTopalov2023}. To the best of the author's knowledge, the only result for quasi-periodic data, which goes beyond energy arguments, is due to Papenburg \cite{Papenburg2025BenjaminOno} who combined a gauge transform adapted to quasi-periodic functions with the Strichartz estimates from \cite{Schippa2025Quasiperiodic}. In terms of regularity, Theorem \ref{thm:QPBenjaminOno} (modestly) improves on the local well-posedness result in \cite{Papenburg2025BenjaminOno}. The key aspect is that it does not depend on a gauge transform. This makes it possible to extend the argument to other dispersion relations or higher nonlinearities $\partial_x (u^k)$, $k \geq 3$; see \cite{ShorttimeFourierTransformRestriction}.




\smallskip

\emph{Outline of the paper.} In Section \ref{section:Prelim} we revisit Sobolev spaces for quasi-periodic functions and define the short-time adapted function spaces, in which we control the evolution. In Section \ref{section:QuasilinearLWP} we show how the estimates in short-time function spaces yield the quasilinear local well-posedness via frequency envelopes. The KdV equation \eqref{eq:KdVIntro} is analyzed in detail. In Section \ref{section:Trilinear} we prove multilinear estimates in adapted function spaces, which are applied in Section \ref{section:NonlinearEstimates} to prove short-time nonlinear and energy estimates for the KdV equation. In Section \ref{section:BenjaminOno} we show how a trilinear estimate obtained in Section \ref{section:Trilinear}, combined with the approach detailed in the context of the KdV equation in the previous sections, yields Theorem \ref{thm:QPBenjaminOno}.

\section{Preliminaries and function spaces}
\label{section:Prelim}

\subsection{Sobolev spaces for almost periodic functions}
We define the mean quadratic norm by
\begin{equation*}
\| u \|_{\mathcal{L}^2(\R)}^2 = \lim_{T \to \infty} \frac{1}{2T} \int_{-T}^T |u(x)|^2 dx < \infty.
\end{equation*}
The paper is concerned with the analysis of quasi-periodic functions. To ease notation, when denoting integrals in space, we do not indicate the spatial mean. We denote averaged multilinear expressions as
\begin{equation*}
\int_{\R} u_1 u_2 \ldots u_m d\bar{x} := \lim_{T \to \infty} \frac{1}{2T} \int_{-T}^T u_1 \ldots u_m dx.
\end{equation*}

For a trigonometric polynomial, i.e., a linear combination of complex exponentials, we find for any $M \in \N$, $(k_i)_{i \in \N}$ with $k_i \neq k_j$ for $i \neq j$,
\begin{equation*}
\big\| \sum_{i =1 }^M a_i e^{i k_i x} \big\|_{\mathcal{L}^2(\R)}^2 = \sum_{i = 1}^M |a_i|^2.
\end{equation*}
The set of $2$-Besicovitch almost periodic functions is given by the closure of trigonometric polynomials in the averaged $L^2$-norm:
\begin{equation*}
\mathcal{B}_2(\R) = \overline{\{ \sum_{\lambda \in \Lambda} a_\lambda e^{i k_\lambda x} : \, \Lambda \subseteq \R, \; \# \Lambda < \infty \}}^{\| \cdot \|_{\mathcal{L}^2}}.
\end{equation*}
It is well-known that for $f \in \mathcal{B}_2(\R)$ we have the (not necessarily pointwise convergent) representation in the Hilbert space sense
\begin{equation*}
f(x) = \sum_{\lambda \in \Lambda} \hat{f}(\lambda) e^{i \lambda x} \text{ with } \hat{f}(\lambda) = \frac{1}{2T} \int_{-T}^T f(x) e^{-i \lambda x} dx.
\end{equation*}
with $\Lambda$ at most countably infinite and 
\begin{equation*}
\| f \|^2_{\mathcal{B}_2(\R)} = \sum_{\lambda \in \Lambda} |\hat{f}(\lambda)|^2.
\end{equation*}
Let $\nu \geq 1$, $\omega \in \R^{\nu}_{>0}$ with $(\omega_1,\ldots,\omega_{\nu})$ being rationally independent.
We let $\sigma(f) = \{ \lambda \in \Lambda : \hat{f}(\lambda) \neq 0 \}$ denote the spectrum of $f$. Recall the following classification of almost periodic functions:
\begin{itemize}
\item If $\sigma(f) \subseteq \Lambda = \omega \Z$, then $f$ is a \emph{periodic function}.
\item If $\sigma(f) \subseteq \Lambda = \omega_1 \Z + \ldots + \omega_{\nu} \Z$ with $\omega \in \R^{\nu}_{>0}$ a rationally independent vector, then $f$ is a \emph{quasi-periodic function}.
\item If $\sigma(f) \subseteq \Lambda = \sum_{k=1}^{\infty} \omega_k \Z$ with $\omega \in \R^{\N}_{>0}$ a rationally independent vector, then $f$ is an \emph{almost-periodic function}. 
\end{itemize}

In the following we consider \emph{quasi-periodic initial data}. For $\nu \in \N$ let $\omega \in \R^{\nu}_{>0}$ be a rationally independent vector. We consider functions on the real line with spectrum in the lattice $\Lambda = \omega_1 \Z + \omega_2 \Z + \ldots + \omega_{\nu} \Z$, which are in the distributional sense given by
\begin{equation*}
	f(x) = \sum_{k \in \Z^{\nu}} e^{i k_{\omega} x} \hat{f}(k_{\omega}).
\end{equation*}
Above we let $k_{\omega} = k \cdot \omega$ and $\langle k \rangle = (1+|k|^2)^{\frac{1}{2}}$. We let $b=\nu-1$ - this is the dimension of frequencies \emph{transverse} to $\omega$. These frequencies are hard to control due to the lack of dispersion. The \emph{height} of a frequency $k_\omega$ is defined by $|k|$. We consider Sobolev spaces of quasi-periodic functions with regularity $s \geq 0$:
\begin{equation*}
	H^{s}_{\Lambda} = \{ f \in \mathcal{B}_2(\R) : \, \sigma(f) \subseteq \Lambda, \; \| f \|^2_{H^s} = \sum_{k \in \Z^{\nu}}  \langle k \rangle^{2s} |\hat{f}(k_{\omega} )|^2 < \infty \}.
\end{equation*}

\begin{remark}
	A more natural choice of norms might be to consider instead weights in the tangential frequencies $k_{\omega}$:
	\begin{equation*}
		\| f \|_{H^{s_1,s_2}_{\Lambda}}^2 = \sum_{k \in \Z^{\nu}} \langle k \rangle^{2 s_1} \langle k_\omega \rangle^{2 s_2} |\hat{f}(k_\omega)|^2.
	\end{equation*}
	By this we indeed find a refinement of the short-time nonlinear estimate, but it causes a problem in commutator estimates which are required to prove energy estimates; cf. the proof of Proposition \ref{prop:EnergyEstimates}.
\end{remark}

\subsection{Frequency localization and adapted $U^p$-$V^p$-spaces}

We define several frequency localization operators.
Let $N,M \in 2^{\Z}$. Let $\chi_1 \in C^\infty_c(B_1(0,2))$ be a radially decreasing function with $\chi_1 \equiv 1$ on $B_1(0,1)$. For $N \in 2^{\N} = \{2,4,\ldots \}$ we let $\chi_N(\xi) = \chi_1(\xi / N) - \chi_1(2 \xi/N)$. Clearly, $1 \equiv \sum_{N \in 2^{\N_0}} \chi_N(x)$.

 We consider the frequency-localization $P_N$, which localizes frequencies $|k_{\omega}| \sim N$:
\begin{equation*}
	P_N u (x) = \sum_{k \in \Z^{\nu}} \chi_N(k_{\omega}) e^{i k_{\omega} x } \hat{u}(k_{\omega}).
\end{equation*} 
We consider the height localization $R_M$, which localizes frequencies $k_\omega$ to $\langle k \rangle \sim M$:
\begin{equation*}
	R_M u(x) = \sum_{k \in \Z^{\nu}} \chi_M(\langle k \rangle) e^{i k_{\omega} x} \hat{u}(k_{\omega}).
\end{equation*}

For an introduction to Banach-valued spaces of bounded variation $V^p H^r_{\Lambda}$, and its predual $U^p H^r_{\Lambda}$ in the present context we refer to \cite[Sections~2.3--2.5]{ShorttimeFourierTransformRestriction}.
The adapted function spaces $V^p_{Ai} H^r_{\Lambda} = e^{t \partial_x^3} V^p H^r_{\Lambda}$, $U^p_{Ai} H^r_{\Lambda} = e^{t \partial_x^3} U^p H^r_{\Lambda}$ are defined like in \cite[Section~2]{ShorttimeFourierTransformRestriction}. 
We define for an interval $I \subseteq \R$
\begin{equation*}
DU^2(I) = \{ \partial_t u : u \in U^2(I) \}
\end{equation*}
with the derivative taken in the sense of tempered distributions. It holds $L^1(I) \hookrightarrow DU^2(I)$ by \cite[Lemma~2.3.6]{ShorttimeFourierTransformRestriction}. 
In \cite[Lemma~2.3.7]{ShorttimeFourierTransformRestriction} is pointed out 
\begin{equation*}
\| f \|_{DU^2(I)} = \sup_{v \in V^2_0(I) = 1} \big| \int_I \int f \bar{v} d \bar{x} dt \big|.
\end{equation*}

\smallskip

Let
\begin{equation*}
I_N =
\begin{cases}
[0,2], \, &N =1, \\
[N/2,2N], \, &N \in 2^{\N}.
\end{cases}
\end{equation*}
We consider short-time adapted function spaces for frequency-localized functions for $N \in 2^{\N_0}$:
\begin{equation*}
\begin{split}
F_{N,r}(T) &= \{ u \in U^2_{Ai} H^0_{\Lambda} : \sigma( u) \subseteq I_N,  \\ 
&\quad \; \| u \|_{F_{N,r}(T)} = \sup_{\substack{I \text{ an interval } \subseteq [0,T]: \\ |I| = \min( N^{-\frac{3}{2}}, T)}} \| 1_{I}(t) u \|_{U^2_{Ai} H^r_{\Lambda}} < \infty \}.
\end{split}
\end{equation*}
Above $1_I$ denotes the indicator function of $I$. When $r=0$, the subindex will be omitted to lighten notations.
The frequency-dependent time localization hinges only on the tangential frequencies. We assemble the short-time function space by Littlewood-Paley decomposition:
\begin{equation*}
\| u \|^2_{F^{r}(T)} = \sum_{N \geq 1} \| P_N u \|^2_{F_{N,r}(T)}.
\end{equation*}
This will be the function space, in which we propagate the solution.
The frequency-dependent time localization $T=T(N)= N^{-\frac{3}{2}}$ does not reduce the periodic geometry to the Euclidean geometry (see \cite[Chapter~3]{ShorttimeFourierTransformRestriction}). Rather, it simultaneouly allows us to overcome the derivative loss in the nonlinearity, captures the smallness of time intervals $(0,T)$, $T \in (0,1)$ (technically, this amounts to leeway in the modulation regularity), and yields favorable energy estimates as well. Indeed, any choice $T=T(N)=N^{-\alpha}$, $\alpha \in (1,2)$ works for the KdV equation.

The dual space to capture the nonlinearity is defined by
\begin{equation*}
\begin{split}
\mathcal{N}_{N,r}(T) &= \{ u \in DU^2_{Ai} H^0_\Lambda : \sigma(u) \subseteq I_N, 
 \\ &\quad \; \| u \|_{\mathcal{N}_N(T)} = \sup_{\substack{I \text{ an interval } \subseteq [0,T] : \\  | I| = \min(T,  N^{-\frac{3}{2}})}} \| 1_I(t) u \|_{DU^2_{Ai} H^r_{\Lambda}} < \infty \}.
\end{split}
\end{equation*}
The space $\mathcal{N}^{r}(T)$ is assembled by Littlewood-Paley theory like above. We define an energy norm, which considers frequency-localized contributions separately:
\begin{equation*}
\| u \|_{E^{r}(T)}^2 = \sum_{N \geq 1} \sup_{t \in [0,T]} \| P_N u(t) \|_{H^r_{\Lambda}}^2.
\end{equation*}
We consider the function space:
\begin{equation*}
E^{r}(T) = \{ u \in C([0,T];H^0_{\Lambda}) \, : \, \| u \|_{E^{r}(T)} < \infty \}.
\end{equation*}

Analogous to the periodic case we have the short-time linear energy estimate due to the Duhamel principle (cf. \cite[Lemma~2.4.1]{ShorttimeFourierTransformRestriction}):
\begin{lemma}[Short-time linear energy estimate]
\label{lem:LinearEnergyEstimate}
Let $r \geq 0$, and $u \in F^{r}(T)$, $v \in L^1([0,T],H^r_\Lambda)$ be a Duhamel solution to 
\begin{equation*}
\partial_t u + \partial_x^3 u = v.
\end{equation*} 
Then it holds
\begin{equation*}
\| u \|_{F^{r}(T)} \lesssim \| u \|_{E^{r}(T)} + \| v \|_{\mathcal{N}^{r}(T)}.
\end{equation*}
\end{lemma}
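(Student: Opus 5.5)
The estimate reduces to a statement for each dyadic frequency block $N$ on each short time interval. Since $P_N$ commutes with $\partial_t + \partial_x^3$, the function $u_N := P_N u$ is a Duhamel solution to $\partial_t u_N + \partial_x^3 u_N = P_N v$. By square-summing over $N \geq 1$, it suffices to prove for each $N$ that
\begin{equation*}
\| P_N u \|_{F_{N,r}(T)} \lesssim \sup_{t \in [0,T]} \| P_N u(t) \|_{H^r_\Lambda} + \| P_N v \|_{\mathcal{N}_{N,r}(T)}.
\end{equation*}
On the support of $P_N$ the weight $\langle k \rangle^r$ is just a multiplier commuting with the flow. We may therefore carry it along, or equivalently replace $u$ by $\langle \nabla \rangle^r$-type height weighted versions via $R_M$, and work with $r=0$.

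Fix an interval $I = [a,b] \subseteq [0,T]$ with $|I| = \min(N^{-3/2},T)$. On $I$ the Duhamel formula gives
\begin{equation*}
u_N(t) = e^{-(t-a)\partial_x^3} u_N(a) + \int_a^t e^{-(t-s)\partial_x^3} P_N v(s)\, ds, \quad t \in I.
\end{equation*}
We estimate $1_I u_N$ in $U^2_{Ai}$ term by term.

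\textbf{The free part.} The function $1_I(t) e^{-(t-a)\partial_x^3} u_N(a)$ is a single $U^2$-atom, after conjugating by the flow. Its norm is therefore bounded by $\| u_N(a) \|_{L^2} \leq \sup_{t\in[0,T]} \|u_N(t)\|_{H^0_\Lambda}$. All of this is compatible with the quasi-periodic setting, because $e^{t\partial_x^3}$ acts unitarily on $H^r_\Lambda$ as a Fourier multiplier $e^{-it k_\omega^3}$.

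\textbf{The Duhamel part.} Conjugating by the linear flow, $w(t) = e^{t\partial_x^3}\int_a^t e^{-(t-s)\partial_x^3}P_N v\,ds$ satisfies $\partial_t w = e^{t\partial_x^3} 1_I P_N v$ with $w(a)=0$. By the definition of $DU^2$, we get $\| 1_{[a,\infty)} w \|_{U^2} \lesssim \| 1_I P_N v \|_{DU^2_{Ai}}$, since $DU^2$ is precisely the space of derivatives of $U^2$ functions and $w$ is the antiderivative vanishing at $a$. Multiplying by $1_I$ costs at most a constant in $U^2$: restriction to an interval is bounded on $U^2$, since truncating an atom yields an atom. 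The right-hand side is in turn bounded by $\| P_N v \|_{\mathcal{N}_{N}(T)}$. The hypothesis $v \in L^1([0,T],H^r_\Lambda)$, together with $L^1 \hookrightarrow DU^2$ (Lemma 2.3.6 of \cite{ShorttimeFourierTransformRestriction}), guarantees that the Duhamel integral makes sense and that $\partial_t w$ is an honest distribution in $DU^2$.

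\textbf{Conclusion.} Taking the supremum over $I$ and summing squares over $N$ gives the claim. The main point of care is the Duhamel step, namely identifying the $U^2$-norm of the antiderivative with the $DU^2$-norm of $1_I P_N v$ and checking that the restriction to $I$ is harmless. This is a direct consequence of the definition of $DU^2$ and the boundedness of the cutoff $1_I$ on $U^2$. As in \cite[Lemma~2.4.1]{ShorttimeFourierTransformRestriction}, nothing depends on the periodic versus quasi-periodic geometry beyond unitarity of the flow on each frequency block.
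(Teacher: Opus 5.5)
Your proposal is correct and follows the same route the paper indicates, namely the Duhamel principle on each interval of length $\min(T,N^{-3/2})$ as in \cite[Lemma~2.4.1]{ShorttimeFourierTransformRestriction}: the free part is a single $U^2$-atom, the inhomogeneous part is controlled by the $DU^2$-norm of $1_I P_N v$, and one then square-sums in $N$. One cosmetic point is that the antiderivative of $1_I e^{t\partial_x^3}P_N v$ vanishing at $-\infty$ is $1_{[a,\infty)}(t)\, w(\min(t,b))$ rather than $1_{[a,\infty)} w$; since it agrees with $w$ on $I$, your bound is unaffected after multiplying by $1_I$.
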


\section{Proof of the quasilinear local well-posedness result}
\label{section:QuasilinearLWP}

This section gives an overview of the argument. The proof of local well-posedness for quasilinear equations via short-time Fourier restriction is standard, so we shall be brief. It follows the general principles:
\begin{enumerate}
\item The proof of a priori estimates for solutions,
\item The proof of estimates for differences of solutions in a weaker topology,
\item The conclusion of continuous dependence of solutions in the original topology via frequency envelopes.
\end{enumerate}
Frequency envelopes were already used by Tao \cite{Tao2004} to prove global well-posedness of the Benjamin-Ono equation on the real line in $H^1(\R)$.

\subsection{Existence of regular solutions}

The starting point of our proof of low regularity well-posedness is the existence of strong solutions to \eqref{eq:KdVIntro} with $u_0 \in H^r_{\Lambda}$, $r = 10 \nu$, which are denoted by $u = S_T^{\infty}(u_0) \in C([0,T],H^r_\Lambda)$ with $T=T(\| u_0 \|_{H^r_\Lambda})$. This was proved via energy arguments for the Benjamin-Ono equation for $r > \frac{\nu}{2}+1$ in \cite{AitzhanAmbrose2024} and extends to the KdV equation. Then we shall see how the short-time analysis allows us to control the solutions depending on much lower regularity. The data-to-solution mapping at low regularities is obtained as extension of the mapping at higher regularities. This requires in the first step to argue that the solutions exist on times depending on the low regularity.
A solution $u \in C([0,T],H^r_{\Lambda})$ to \eqref{eq:KdVIntro} is referred to as \emph{strong} provided that
\begin{equation*}
u(t) = e^{t \partial_x^3} u_0 + \int_0^t e^{(t-s)\partial_x^3} (u \partial_x u)(s) ds
\end{equation*}
and $u \partial_x u \in \mathcal{L}^2$. For $r > \frac{\nu+1}{2}$, the latter condition is satisfied by the algebra property of Sobolev spaces.

We remark that the short-time analysis can be modified to construct solutions via Galerkin approximation, considering the truncated non-linearity
\begin{equation*}
\left\{ \begin{array}{cl}
\partial_t u + \partial_x^3 u &= R_M (R_M u \partial_x  R_M u), \\
u(0) &= u_0 \in H^s_\Lambda.
\end{array} \right.
\end{equation*}
This nonlinearity is bounded in $H^s_\Lambda$ for $s \geq 0$, which yields the existence of global solutions in $H^0_\Lambda$. Proving short-time bounds independently of $M$ yields solutions by compactness arguments. For a different model this was detailed in \cite{HerrSchippaTzvetkov2026}, to which we refer for the conclusion of local well-posedness via short-time estimates. Precisely, we prove the following theorem:
\begin{theorem}
For $s > \frac{\nu+1}{2}$ the data-to-solution mapping $S_T^{\infty}$ extends uniquely to a continuous flow. For any $R>0$ there is a $T_R > 0$ and a continuous flow $S_{T_R}^s : B_{H^s_\Lambda}(0,R) \to C([0,T_R],H^s_{\Lambda})$, which assigns to any $u_0 \in B_{H^s_\Lambda}(0,R)$ a strong solution $u \in F^s(T) \hookrightarrow C([0,T],H^s_\Lambda)$. The dependence $T=T_R > 0$ is lower semicontinuous and $T \gtrsim 1$ as $\| u_0 \|_{H^s_\Lambda} \to 0$. 
\end{theorem}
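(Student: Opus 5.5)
The plan is the standard short-time Fourier restriction scheme, with three ingredients postponed to later sections (Lemma \ref{lem:LinearEnergyEstimate} is already available). For smooth solutions $u = S_T^\infty(u_0)$, $u_0 \in H^{10\nu}_\Lambda$, I would establish the following system of estimates for $s > \frac{\nu+1}{2}$ and $T \in (0,1]$:
\begin{equation*}
\left\{ \begin{array}{rl}
\| u \|_{F^s(T)} &\lesssim \| u \|_{E^s(T)} + \| \partial_x(u^2) \|_{\mathcal{N}^s(T)}, \\
\| \partial_x (u_1 u_2) \|_{\mathcal{N}^s(T)} &\lesssim T^{\theta} \| u_1 \|_{F^s(T)} \| u_2 \|_{F^s(T)}, \\
\| u \|_{E^s(T)}^2 &\lesssim \| u_0 \|_{H^s_\Lambda}^2 + T^{\theta} \| u \|_{F^s(T)}^3,
\end{array} \right.
\end{equation*}
with some $\theta > 0$. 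I would also prove the analogous estimates for differences $v = u_1 - u_2$ at regularity $0$ (or slightly negative), with the coefficients controlled by $\| u_i \|_{F^s}$.

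The first line is Lemma \ref{lem:LinearEnergyEstimate}. The second is the short-time nonlinear estimate. I would decompose dyadically in the tangential frequencies $N_1, N_2, N$ and handle the High$\times$Low$\to$High interaction as follows. On intervals of length $N^{-3/2}$ one loses at most $N^{3/2}/N_2^{3/2}$ in re-localizing the low-frequency factor. Against this, the bilinear Strichartz estimate for quasi-periodic functions on frequency-dependent times (to be proved in Section \ref{section:Trilinear} via the bilinear C\'ordoba--Fefferman square function estimate) gains roughly $N^{-1}$ times a transverse-height factor $M^{b/2}$. That height factor is absorbed by $s > \frac{\nu+1}{2} = 1 + \frac{b}{2}$. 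The High$\times$High$\to$Low and resonant interactions, where $k_\omega$ is small but the height is large, I would treat with a Sobolev/Bernstein bound in the transverse directions.

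The third line is the energy estimate, and I expect it to be the main obstacle. I would compute $\frac{d}{dt}\| P_N u \|_{H^s}^2$. In the High$\times$Low interaction the derivative must be moved by a commutator, $[P_N, u]\partial_x$ or an integration by parts. The remaining term then has the form $\int P_N u \, P_N u \, \partial_x P_{N_2} u$, and must be estimated on $[0,T]$ by splitting into $N^{3/2}T$ intervals of length $N^{-3/2}$ and applying the trilinear estimate on each. This is where the height weight $\langle k \rangle^s$ rather than $\langle k_\omega \rangle^s$ matters: the commutator must also handle the weight $\langle k\rangle^{2s}$, via $\langle k_1+k_2\rangle^{2s} - \langle k_1\rangle^{2s} \lesssim \langle k_1\rangle^{2s-1}\langle k_2\rangle$. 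Even then the derivative falls on the tangential variable while the gain is only in height, so one must carefully trade the tangential derivative against the bilinear gain. My first attempt would be a symmetrization that exploits the resonance $\Omega = 3 (k_1+k_2)_\omega (k_1)_\omega (k_2)_\omega$ in the non-degenerate region. In the region where $|(k_2)_\omega| \lesssim 1$ I would simply bound the term using $\| P_{N_2} \partial_x u \|_{L^\infty} \lesssim \| u \|_{H^s}$ for $s > \frac{\nu+1}{2}$.

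With the system in hand, a continuity argument in $T$ (using that $T \mapsto \| u \|_{E^s(T)}, \| u \|_{F^s(T)}$ are continuous for smooth solutions, with small data values as $T \to 0$) gives $\| u \|_{F^s(T_R)} \lesssim \| u_0 \|_{H^s_\Lambda}$ for $T_R$ depending on $R$, with $T_R \gtrsim 1$ for small data. The difference estimates give Lipschitz dependence in the weaker norm, and hence uniqueness. Continuity in $H^s_\Lambda$ then follows by the Bona--Smith/frequency-envelope argument: regularize the data by $P_{\leq K}$, use the frequency-envelope version of the a priori bounds to get uniform high-frequency tails, and interpolate with the weak difference bound. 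Finally, the density of $H^{10\nu}_\Lambda$ yields the unique continuous extension $S^s_{T_R}$, and the embedding $F^s(T) \hookrightarrow C([0,T],H^s_\Lambda)$ follows from $U^2 \hookrightarrow L^\infty$ on each short interval together with the $E^s$ control.
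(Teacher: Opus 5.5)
Your architecture matches the paper's: the three-estimate system (Lemma \ref{lem:LinearEnergyEstimate}, Propositions \ref{prop:NonlinearEstimate} and \ref{prop:EnergyEstimates}), the time scale $N^{-3/2}$, the square-function bilinear estimate with factor $M_{\min}^{b/2}$, $\mathcal{L}^2$-level difference bounds, and frequency envelopes.

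\textbf{The gap: the energy estimate.} You flag this step as the main obstacle and leave it to a tentative resonance/symmetrization argument. The missing observation is that tangential frequency is dominated by height, $|k\cdot\omega|\le|\omega||k|$, so $N\lesssim M$ for every block $P_NR_Mu$. With it, no resonance analysis is needed.
\begin{itemize}
\item After integration by parts, $\iint P_NR_Mu\,\partial_xP_{N'}R_{M'}u\,P_NR_Mu$ costs $N'\lesssim M'$. Proposition \ref{prop:TrilinearEstimate} on each of the $\sim TN^{3/2}$ intervals of length $N^{-3/2}$ then gives $TN'(M')^{b/2}\lesssim T(M')^{1+b/2}$.
\item The commutator with the height cutoff costs $N\cdot\frac{M'}{M}\lesssim M'$.
\item Your plan misses the case where the factor with low tangential frequency carries the large height (the term $P_{\ll N}u\,R_{\ll M}u$ in \eqref{eq:ParaproductDecomposition}). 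There your bound $\langle k_1+k_2\rangle^{2s}-\langle k_1\rangle^{2s}\lesssim\langle k_1\rangle^{2s-1}\langle k_2\rangle$ fails, and integration by parts is impossible. Your $L^\infty$ bound on $\partial_xP_{N_2}u$ does not help either, since the derivative sits on the factor at tangential frequency $N$. What rescues this case is again $N\lesssim M_{\min}$, which lets the trilinear estimate be applied directly without moving the derivative.
\end{itemize}
In every case the loss is at most $T\,M_{\min}^{1+b/2}$, and this is exactly where $s>1+\frac{b}{2}=\frac{\nu+1}{2}$ is used. A modified energy built on $\Omega$ is therefore an unnecessary detour. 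It would also generate quartic terms you have not estimated, and the short-time localization was chosen precisely to avoid modulation analysis, which degenerates in the quasi-periodic setting.

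\textbf{Bookkeeping in the nonlinear estimate.} The scheme survives here, but your accounting does not close as written.
\begin{itemize}
\item In High$\times$Low$\to$High there is no re-localization loss. Restriction to subintervals is bounded on $U^2$, and both the output and the high input already live on the scale $N^{-3/2}$.
\item The loss $(N_1/N_3)^{3/2}$ occurs instead in High$\times$High$\to$Low. It is compensated because the derivative is only $N_3$ and the trilinear bound is $|\mathcal{I}|M_{\min}^{b/2}$, giving $N_3\cdot N_1^{-3/2}(N_1/N_3)^{3/2}=N_3^{-1/2}$.
\item On an interval of length $N^{-3/2}$, Proposition \ref{prop:ShorttimeBilinearEstimateAiry} gives $N^{-3/4}M_{\min}^{b/2}$, not $N^{-1}$. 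The net factor in High$\times$Low$\to$High is therefore $N\cdot N^{-3/2}=N^{-1/2}\le T^{1/3}$.
\item The nonlinear estimate only needs $s>\frac{b}{2}$. The extra derivative in the threshold is consumed by the energy estimate, not by the $M^{b/2}$ factor.
\end{itemize}
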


\subsection{A priori estimates for solutions}

For solutions $u$ to the KdV equation with regular quasi-periodic initial data $u_0 \in H^{10 r'}_\Lambda$ we shall establish the following set of estimates for $r' \geq r > \frac{b}{2}+1$:
\begin{equation*}
\left\{ \begin{array}{cl}
\| u \|_{F^{r'}(T)} &\lesssim \| u \|_{E^{r'}(T)} + \| \partial_x (u^2) \|_{\mathcal{N}^{r'}(T)}, \\
\| \partial_x (u^2) \|_{\mathcal{N}^{r'}(T)} &\lesssim T^{\frac{1}{4}} \| u \|_{F^{r}(T)} \| u \|_{F^{r'}(T)}, \\
\| u \|_{E^{r'}(T)}^2 &\lesssim \| u_0 \|^2_{H^{r'}_{\Lambda}} + T^{\frac{1}{2}} \| u \|_{F^{r'}(T)}^2 \| u \|_{F^{r}(T)}
\end{array} \right.
\end{equation*}
for $0<T\leq 1$. The above set of estimates is the content of Lemma \ref{lem:LinearEnergyEstimate}, Propositions \ref{prop:NonlinearEstimate} and \ref{prop:EnergyEstimates}. Note that for the regular solutions at hand, the time of existence $T^*$ depends on $\| u_0 \|_{H^{10 r'}_\Lambda}$.

To find a priori estimates, we apply the above for $r'=r$.
The estimate
\begin{equation*}
\| u \|_{F^{r}(T)}^2 \lesssim \| u_0 \|^2_{H^{r}_\Lambda} + T^{\frac{1}{4}} (\| u \|_{F^{r}(T)}^3 + \| u \|^4_{F^{r}(T)})
\end{equation*}
is immediate. By the estimates (cf. \cite{IonescuKenigTataru2008})
\begin{equation*}
\limsup_{T \to 0} \| u \|_{E^{r}(T)} \lesssim \| u_0 \|_{H^{r}_{\Lambda}}, \quad \limsup_{T \to 0} \| u \partial_x u \|_{\mathcal{N}^{r}(T)} = 0,
\end{equation*}
it follows from a standard continuity argument that
\begin{equation*}
\| u \|_{F^{r}(T)} \lesssim \| u_0 \|_{H^{r}_{\Lambda}}
\end{equation*}
for $T=T(\| u_0 \|_{\mathcal{H}^{r}_{\Lambda}}) \leq T^*$. Using persistence of regularity, we find that $T=T(\| u_0 \|_{H^r_\Lambda})$ with $T \gtrsim 1$ as $u_0 \to 0$.

\subsection{A priori estimates for differences of solutions}

With a priori estimates for solutions at hand, we can turn to estimating differences of solutions $v = u_1 - u_2$ on times $T=T(\| u_1(0) \|_{H^r_\Lambda}, \| u_2(0) \|_{H^r_\Lambda})$ which satisfy the equation:
\begin{equation*}
\partial_t v + \partial_x^3 v = \partial_x (v(u_1 + u_2))/2.
\end{equation*}
We shall prove the following set of estimates for $v$ on the level of $\mathcal{L}^2$:
\begin{equation*}
\left\{ \begin{array}{cl}
\| v \|_{F^{0}(T)} &\lesssim \| v \|_{E^{0}(T)} + \| \partial_x (v(u_1+u_2))\|_{\mathcal{N}^{0}(T)}, \\
\| \partial_x (v(u_1+u_2))\|_{\mathcal{N}^{0}(T)} &\lesssim T^{\frac{1}{4}} \| v \|_{F^{0}(T)} (\| u_1 \|_{F^{r}(T)} + \| u_2 \|_{F^{r}(T)}), \\
\| v \|^2_{E^{0}(T)} &\lesssim \| v(0) \|^2_{H_\Lambda^{0}(T)} + T^{\frac{1}{2}} \| v \|_{F^{0}(T)}^2 (\| u_1 \|_{F^{r}(T)} + \| u_2 \|_{F^{r}(T)}).
\end{array} \right.
\end{equation*}
This is the content of Lemma \ref{lem:LinearEnergyEstimate}, Propositions \ref{prop:NonlinearEstimate} and \ref{prop:EnergyEstimateDifferences}.
By the a priori estimates for solutions we obtain for $T=T(\| u_i(0) \|_{H_{\Lambda}^{r}})$ Lipschitz-continuous dependence of differences in $H^0_{\Lambda}$ depending on higher regularity:
\begin{equation*}
\| v \|_{F^{0}(T)} \lesssim \| v(0) \|_{\mathcal{L}^2_x}.
\end{equation*}

\subsection{Conclusion of local well-posedness}

With the a priori estimates in the original topology and the Lipschitz dependence of solutions in a weaker topology at hand, we can conclude the continuous dependence of solutions in the $H^r_{\Lambda}$-norm via frequency envelopes \cite{Tao2004}. Since this has become folklore, the details are omitted.
$\hfill \Box$

\section{Multilinear estimates in adapted function spaces}
\label{section:Trilinear}
In this section we prove bi- and trilinear estimates in function spaces adapted to dispersive equations. Together with the frequency-dependent time localization, these will allow us to control the derivative nonlinearity.
We begin with a bilinear estimate, which relies on the bilinear C\'ordoba--Fefferman square function estimate. First, we recall the linear C\'ordoba--Fefferman square function estimate. Let $\pm \mathbb{P}^1 = \{ (\xi,\pm \xi^2) : 0 < \xi < 2 \}$ denote the graph of the positive resp. negative parabola, and $\mathcal{N}_{\delta}(\pm \mathbb{P}^1)$ denote the $\delta$-neighborhood of $\pm \mathbb{P}^1$. For $0< \alpha \leq 1$ we denote by $\mathbb{I}_{\alpha}$ a finitely overlapping partition of $(-3,3)$ into $\alpha$-intervals.

For $F \in \mathcal{S}(\R^2)$, $\theta \in \mathbb{I}_{\alpha}$ we denote by $F_{\theta}$ the Fourier projection  of the first frequency coordinate to $\theta$ given by $\hat{F}_{\theta}(\xi) = 1_{\theta}(\xi_1) \hat{F}(\xi_1,\xi_2)$. By $\pi_1 : \R^2 \to \R$ the projection $(x_1,x_2) \mapsto x_1$ is deonted. We have the following square function estimate:
\begin{theorem}[C\'ordoba--Fefferman square function estimate]
Let $0<\delta \leq 1$, $F \in \mathcal{S}(\R^2)$ with $\text{supp}(\hat{F}) \subseteq \mathcal{N}_{\delta}(\pm \mathbb{P}^1)$. Then the following holds:
\begin{equation*}
\| F \|_{L^4(\R^2)} \lesssim \big\| \big( \sum_{\theta \in \mathbb{I}_{\delta^{1/2}}} |F_{\theta}|^2 \big)^{\frac{1}{2}} \big\|_{L^4(\R^2)}.
\end{equation*}
\end{theorem}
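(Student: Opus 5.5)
The plan is the classical biorthogonality argument: write $\|F\|_{L^4}^2 = \|F^2\|_{L^2} = \|\sum_{\theta,\theta'} F_\theta F_{\theta'}\|_{L^2}$ and use Plancherel to reduce matters to the overlap of the Fourier supports of the products $F_\theta F_{\theta'}$. By a reflection we may assume the support lies in $\mathcal{N}_\delta(\mathbb{P}^1)$. We may also assume $\delta$ is small, since for $\delta \sim 1$ the number of intervals $\theta$ is $O(1)$ and the estimate follows from the triangle and Cauchy--Schwarz inequalities. We split the pairs $(\theta,\theta')$ according to the separation $\mathrm{dist}(\theta,\theta')$ measured in units of $\delta^{1/2}$. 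Pairs at distance $\lesssim \delta^{1/2}$, i.e.\ neighbouring pairs, are grouped with the diagonal. Since each interval has $O(1)$ neighbours, their contribution is bounded pointwise by $\sum_\theta |F_\theta|^2$ up to a constant, by Cauchy--Schwarz.

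For the off-diagonal terms, the Fourier support of $F_\theta F_{\theta'}$ lies in the sumset $\mathcal{N}_\delta(\mathbb{P}^1)|_\theta + \mathcal{N}_\delta(\mathbb{P}^1)|_{\theta'}$. Writing $\theta=[a,a+\delta^{1/2}]$ and $\theta'=[a',a'+\delta^{1/2}]$, a point of this set has the form $(\xi_1+\xi_1',\ \xi_1^2+\xi_1'^2+O(\delta))$. Set $\sigma=\xi_1+\xi_1'$ and $d=\xi_1-\xi_1'$, so that the second coordinate equals $\tfrac12(\sigma^2+d^2)+O(\delta)$. The key geometric claim is that the sets $S_{\theta,\theta'}$, for $(\theta,\theta')$ ranging over unordered pairs with $|a-a'|\ge C\delta^{1/2}$, are finitely overlapping, with the overlap bounded independently of $\delta$.

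To verify the claim, fix a point $(\eta_1,\eta_2)$. Then $\sigma=\eta_1$ up to $O(\delta^{1/2})$, so the sum $a+a'$ is determined up to $O(\delta^{1/2})$. Next, $d^2 = 2\eta_2-\sigma^2+O(\delta)$; because $\sigma$ is only known to within $\delta^{1/2}$, this determines $d^2$ up to $O(\delta^{1/2})$. Since $|d|\gtrsim \delta^{1/2}$ on separated pairs, we get $|d|$ up to an error $O(\delta^{1/2}/|d|)$. This error is not of size $O(\delta^{1/2})$ when $|d|$ is small, and this is the main obstacle.

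To fix it, I would not use $\sigma$ crudely. Instead I would use the exact relation
\[
\eta_2-\tfrac12\eta_1^2=\tfrac12 d^2+O(\delta),
\]
in which $\eta_1$ is known exactly at the given point. This pins down $d^2$ to within $O(\delta)$, hence $|d|$ to within $O(\delta/|d|)\lesssim \delta^{1/2}$. Together with $\sigma$ this determines $(a,a')$ up to $O(1)$ choices, which gives bounded overlap. Plancherel and almost-orthogonality then yield
\[
\Big\|\sum_{\text{sep}} F_\theta F_{\theta'}\Big\|_{L^2}^2 \lesssim \sum_{\theta,\theta'} \|F_\theta F_{\theta'}\|_{L^2}^2 = \Big\|\sum_\theta |F_\theta|^2\Big\|_{L^2}^2 .
\]
Combining this with the diagonal bound gives $\|F\|_{L^4}^2 \lesssim \|(\sum_\theta|F_\theta|^2)^{1/2}\|_{L^4}^2$, which is the asserted estimate.
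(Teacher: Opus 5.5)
Your proposal is correct. It is the classical C\'ordoba--Fefferman biorthogonality argument: Plancherel, combined with bounded overlap of the sumsets $S_\theta+S_{\theta'}$. That overlap bound follows because $\eta_1=\xi+\xi'$ holds exactly and $\eta_2-\tfrac12\eta_1^2=\tfrac12(\xi-\xi')^2+O(\delta)$.

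The paper does not prove this statement; it quotes it from Fefferman and C\'ordoba. It uses the same mechanism only in its proof of the bilinear variant.

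As a minor simplification, the near/far split and the detour are not needed. The bound $\bigl||d|-|\tilde d|\bigr|\le|d^2-\tilde d^2|^{1/2}\lesssim\delta^{1/2}$ already gives bounded overlap over all unordered pairs.
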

The square function estimate depends on the non-degeneracy of the curve and versions for $\Gamma_3 = \{ (\xi,\xi^3) : \xi \in (-2,2) \}$ need to respect the flatness at the origin. In \cite[Theorem~2.2]{Schippa2025Quasiperiodic} a square function estimate with $\delta^{1/3}$-intervals is proved.

The square function estimate can be used to estimate exponential sums by approximating the exponential sum with an oscillatory integral, carrying out the square function estimate and passing back to an exponential sum. This will be referred to as continuous approximation in the following. To the best of the author's knowledge, this approximation was first carried out by Bourgain \cite{Bourgain2013}.
In \cite[Section~3.1]{Schippa2025Quasiperiodic} was pointed out how the square function estimate together with the continuous approximation yields sharp estimates for quasi-periodic functions. 

\smallskip

Here we point out how the bilinear C\'ordoba--Fefferman square function estimate yields new bilinear estimates for exponential sums related to quasi-periodic functions. Under an additional transversality assumption we have the following refinement of the linear estimate:
\begin{theorem}[Bilinear C\'ordoba--Fefferman square function estimate for the cubic]
Let $0 < \delta \leq 1$, $F_i \in \mathcal{S}(\R^2)$, $i=1,2$ with $\text{supp}(\hat{F}_i) \subseteq \mathcal{N}_{\delta}(\Gamma_3)$ such that
\begin{equation*}
\text{dist}(\text{supp}(\pi_1 \hat{F}_1),\text{supp}(\pi_1 \hat{F}_2)) \geq 1/16 \text{ and }
\text{dist}(\text{supp}(\pi_1 \hat{F}_1),\text{supp}(- \pi_1 \hat{F}_2)) \geq 1/16.
\end{equation*}
Then it holds:
\begin{equation*}
\| F_1 F_2 \|_{L^2(\R^2)} \lesssim \big\| \big( \sum_{\theta_1 \in \mathbb{I}_{\delta}} |F_{\theta_1}|^2 \big)^{\frac{1}{2}} \big( \sum_{\theta_2 \in \mathbb{I}_{\delta}} |F_{\theta_2}|^2 \big)^{\frac{1}{2}} \big\|_{L^2(\R^2)}.
\end{equation*}
\end{theorem}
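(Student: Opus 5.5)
Expand both sides on the Fourier side and reduce to a counting (almost orthogonality) statement. Write $F_i = \sum_{\theta_i \in \mathbb{I}_\delta} F_{i,\theta_i}$, so that
\begin{equation*}
F_1 F_2 = \sum_{\theta_1,\theta_2} F_{1,\theta_1} F_{2,\theta_2}.
\end{equation*}
Each product $F_{1,\theta_1}F_{2,\theta_2}$ has Fourier support in $\mathcal{N}_\delta(\Gamma_3)\cap(\theta_1\times\R) + \mathcal{N}_\delta(\Gamma_3)\cap(\theta_2 \times \R)$. This is contained in an $O(\delta)$-neighborhood of the point
\begin{equation*}
(\xi_1+\xi_2,\ \xi_1^3+\xi_2^3),
\end{equation*}
where $\xi_i$ is the center of $\theta_i$. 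Indeed, each piece $\mathcal{N}_\delta(\Gamma_3)\cap(\theta\times\R)$ lies in a box of size $\delta \times O(\delta)$, since $\Gamma_3$ has bounded slope on $(-2,2)$. Thus the summands are supported in $O(\delta)$-balls $B_{\theta_1,\theta_2}$.

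\textbf{Bounded overlap.} The key step is to show that the balls $B_{\theta_1,\theta_2}$ have bounded overlap over pairs $(\theta_1,\theta_2)$ with $\theta_i$ meeting $\text{supp}(\pi_1\hat F_i)$. Consider the map
\begin{equation*}
\Phi(\xi_1,\xi_2) = (\xi_1+\xi_2,\ \xi_1^3+\xi_2^3).
\end{equation*}
Its Jacobian determinant is $3(\xi_2^2-\xi_1^2) = 3(\xi_2-\xi_1)(\xi_2+\xi_1)$. The hypotheses $\text{dist}(\text{supp}\,\pi_1\hat F_1, \pm\text{supp}\,\pi_1\hat F_2)\ge 1/16$ give $|\xi_1-\xi_2| \gtrsim 1$ and $|\xi_1+\xi_2| \gtrsim 1$, so $|\det D\Phi| \gtrsim 1$ on the relevant region.

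Local invertibility alone does not give a global injectivity bound, so I would argue directly. Suppose $\Phi(\xi_1,\xi_2)$ and $\Phi(\eta_1,\eta_2)$ are within $O(\delta)$. Put $a = \xi_1+\xi_2$, so $|a-(\eta_1+\eta_2)| \lesssim \delta$, and note
\begin{equation*}
\xi_1^3+\xi_2^3 = a^3 - 3a\,\xi_1\xi_2.
\end{equation*}
Since $|a| \gtrsim 1$, the product $\xi_1\xi_2$ is determined up to $O(\delta)$. Hence $\{\xi_1,\xi_2\}$ is determined up to $O(\delta)$ as the roots of $z^2 - az + \xi_1\xi_2$, because the root separation $|\xi_1-\xi_2| \gtrsim 1$ makes the roots Lipschitz in the coefficients. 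The unordered pair is therefore determined, and the ordered pair is fixed up to $O(1)$ ambiguity: either $(\xi_1,\xi_2)$ or $(\xi_2,\xi_1)$, and each alternative contributes only $O(1)$ pairs of $\delta$-intervals. So each point of the plane lies in $O(1)$ of the balls $B_{\theta_1,\theta_2}$.

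\textbf{Conclusion.} With bounded overlap, Plancherel gives
\begin{equation*}
\|F_1F_2\|_{L^2}^2 \lesssim \sum_{\theta_1,\theta_2} \|F_{1,\theta_1}F_{2,\theta_2}\|_{L^2}^2 = \Big\| \Big(\sum_{\theta_1}|F_{1,\theta_1}|^2\Big)^{\frac12} \Big(\sum_{\theta_2}|F_{2,\theta_2}|^2\Big)^{\frac12} \Big\|_{L^2}^2,
\end{equation*}
which is the claim. Here $F_{\theta_i}$ in the statement should be read as $F_{i,\theta_i}$.

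\textbf{Main obstacle.} The main point needing care is the bounded-overlap count. It uses both separation conditions: $|\xi_1+\xi_2|\gtrsim 1$ to recover the product $\xi_1\xi_2$, and $|\xi_1-\xi_2|\gtrsim 1$ for the root stability. The $\delta$-neighborhood fattening must also be tracked to stay $O(\delta)$. Sharp cutoffs $1_\theta$ are harmless here, since only Fourier supports and Plancherel are used.
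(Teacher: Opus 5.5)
Your proof is correct and is essentially the paper's argument: the paper expands $\|F_1F_2\|_{L^2}^2$ as a sum over $(\theta_1,\theta_2,\theta_3,\theta_4)$ and uses the same algebra to show that a term vanishes unless $|\xi_1-\xi_3|,|\xi_2-\xi_4|\lesssim\delta$, which is exactly your bounded overlap of the Fourier supports of $F_{1,\theta_1}F_{2,\theta_2}$. That algebra is: $|\xi_1+\xi_2|\gtrsim 1$ recovers $\xi_1\xi_2$ up to $O(\delta)$ from $\xi_1^3+\xi_2^3$, and then $|\xi_1-\xi_2|\gtrsim 1$ pins down the pair. The one detail to add is the paper's initial reduction to $\delta<1/32$ (otherwise there are $O(1)$ intervals and Cauchy--Schwarz suffices), which lets you transfer the separation hypotheses from the Fourier supports to the centers of the $\theta_i$.
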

\begin{proof}
We can suppose by Cauchy-Schwarz that $\delta < 1/32$.
We let $F_i = \sum_{\theta \in \mathbb{I}_{\delta}} F_{i \theta}$ and obtain
\begin{equation*}
\big\| \sum_{\theta_1 \in \mathbb{I}_{\delta}} F_{1 \theta_1} \sum_{\theta_2 \in \mathbb{I}_{\delta}} F_{2 \theta_2} \big\|^2_{L^2(\R^2)} = \sum_{\substack{\theta_1,\theta_2, \\ \theta_3,\theta_4}} \int F_{1 \theta_1} F_{2 \theta_2} \overline{F}_{1 \theta_3} \overline{F}_{2 \theta_4} dx.
\end{equation*}
We find by Plancherel's theorem that
\begin{equation}
\label{eq:TrivialBiorthogonality}
\int F_{1 \theta_1} F_{2 \theta_2} \overline{F}_{1 \theta_3} \overline{F}_{2 \theta_4} dx = 0,
\end{equation}
unless there are $\xi_i \in 2 \theta_i$, which satisfy
\begin{equation*}
\left\{ \begin{array}{cl}
\xi_1 + \xi_2 &= \xi_3 + \xi_4, \\
\xi_1^3 + \xi_2^3 &= \xi_3^3 + \xi_4^3 + \mathcal{O}(\delta).
\end{array} \right.
\end{equation*}
Taking the third power of the first equation and subtracting the second, we find
\begin{equation*}
\left\{ \begin{array}{cl}
\xi_1 + \xi_2 &= \xi_3 + \xi_4, \\
\xi_1 \xi_2 (\xi_1+\xi_2) &= \xi_3 \xi_4(\xi_3 + \xi_4) + \mathcal{O}(\delta).
\end{array} \right.
\end{equation*}
By the separation of $\pi_1 \text{supp}(\hat{F}_1)$ and $-\pi_1 \text{supp}(\hat{F}_2)$, we obtain
\begin{equation*}
\left\{ \begin{array}{cl}
\xi_1 + \xi_2 &= \xi_3 + \xi_4, \\
2 \xi_1 \xi_2  &=  2 \xi_3 \xi_4 + \mathcal{O}(\delta).
\end{array} \right.
\end{equation*}
This is equivalent to
\begin{equation}
\label{eq:BiorthogonalityAux}
\left\{ \begin{array}{cl}
\xi_1 + \xi_2 &= \xi_3 + \xi_4, \\
\xi_1^2 + \xi_2^2  &=  \xi_3^2 + \xi_4^2 + \mathcal{O}(\delta)
\end{array} \right.
\end{equation}
and by the classical bilinear C\'ordoba--Fefferman argument, which depends on \\ $\text{dist}(\pi_1 \text{supp}(\hat{F}_1),\pi_1 \text{supp}(\hat{F}_2)) \geq 1/16$, we find that \eqref{eq:TrivialBiorthogonality} holds unless
\begin{equation}
\label{eq:Biorthogonality}
|\xi_1 - \xi_3| \lesssim \delta, \quad |\xi_2 - \xi_4 | \lesssim \delta,
\end{equation}
from which the claim is immediate by Cauchy-Schwarz. 

For self-containedness we note that \eqref{eq:BiorthogonalityAux} yields indeed
\begin{equation*}
\left\{ \begin{array}{cl}
\xi_1 + \xi_2 &= \xi_3 + \xi_4, \\
(\xi_1- \xi_4)(\xi_1+\xi_4)  &=  (\xi_3-\xi_2)(\xi_3+\xi_2) + \mathcal{O}(\delta)
\end{array} \right.
\end{equation*}
and furthermore, by the separation condition,
\begin{equation*}
\left\{ \begin{array}{cl}
\xi_1 + \xi_2 &= \xi_3 + \xi_4, \\
\xi_1+\xi_4  &=  \xi_3+\xi_2 + \mathcal{O}(\delta),
\end{array} \right.
\end{equation*}
which yields again \eqref{eq:Biorthogonality}.
\end{proof}

The argument is flexible and foremostly hinges on adequate separation conditions of the frequencies. We record the following variant, which plays the key role when estimating solutions to the Benjamin-Ono equation.

\begin{theorem}[C\'ordoba--Fefferman bilinear square function estimate, II]
\label{thm:SFBO}
Let $0<\delta\leq 1$, $F_i \in \mathcal{S}(\R^2)$, $i=1,2$ with $\text{supp}(\hat{F}_i) \subseteq \mathcal{N}_{\delta}(\mathbb{P}^1)$ or $\text{supp}(\hat{F}_i) \subseteq \mathcal{N}_{\delta}(-\mathbb{P}^1)$ and 
\begin{equation*}
\text{dist}(\text{supp}(\pi_1 \hat{F}_1),\text{supp}(\pi_1 \hat{F}_2)) \geq 1/16 \text{ and }
\text{dist}(\text{supp}(\pi_1 \hat{F}_1),\text{supp}(- \pi_1 \hat{F}_2)) \geq 1/16.
\end{equation*}
Then it holds:
\begin{equation*}
\| F_1 F_2 \|_{L^2(\R^2)} \lesssim \big\| \big( \sum_{\theta_1 \in \mathbb{I}_{\delta}} |F_{\theta_1}|^2 \big)^{\frac{1}{2}} \big( \sum_{\theta_2 \in \mathbb{I}_{\delta}} |F_{\theta_2}|^2 \big)^{\frac{1}{2}} \big\|_{L^2(\R^2)}.
\end{equation*}
\end{theorem}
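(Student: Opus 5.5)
We follow the proof of the bilinear square function estimate for the cubic verbatim; only the algebraic reduction changes. Assume $\delta < 1/32$, since otherwise the claim follows from Cauchy--Schwarz with $O(1)$ many intervals. Decompose $F_i = \sum_{\theta \in \mathbb{I}_\delta} F_{i\theta}$ and expand $\|F_1F_2\|_{L^2}^2$ into the quadrilinear sum
\begin{equation*}
\sum_{\theta_1,\theta_2,\theta_3,\theta_4} \int F_{1\theta_1}F_{2\theta_2}\overline{F}_{1\theta_3}\overline{F}_{2\theta_4}\,dx .
\end{equation*}
By Plancherel, a term vanishes unless there are $\xi_j \in 2\theta_j$ with $\xi_1+\xi_2 = \xi_3+\xi_4$ and $\phi_1(\xi_1)+\phi_2(\xi_2) = \phi_1(\xi_3)+\phi_2(\xi_4)+\mathcal{O}(\delta)$. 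Here $\phi_i(\xi) = \epsilon_i \xi^2$ and $\epsilon_i \in \{\pm 1\}$ records whether $\hat F_i$ is supported near $\mathbb{P}^1$ or near $-\mathbb{P}^1$. The goal is to show that these conditions force $|\xi_1-\xi_3| \lesssim \delta$ and $|\xi_2-\xi_4|\lesssim\delta$. Once that holds, each $\theta_1$ interacts with only $O(1)$ intervals $\theta_3$, and likewise $\theta_2$ with $\theta_4$. Cauchy--Schwarz then bounds the sum by $\int \sum_{\theta_1,\theta_2}|F_{1\theta_1}|^2|F_{2\theta_2}|^2$, which is the right-hand side.

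The algebra splits into two cases. If $\epsilon_1=\epsilon_2$, then after multiplying by $\epsilon_1$ we are exactly in \eqref{eq:BiorthogonalityAux}. The factorization used above gives
\begin{equation*}
(\xi_1-\xi_4)(\xi_1+\xi_4) = (\xi_3-\xi_2)(\xi_3+\xi_2)+\mathcal{O}(\delta).
\end{equation*}
Since $\xi_1-\xi_4 = \xi_3-\xi_2$ and $|\xi_3-\xi_2| \gtrsim 1/16$ by the first separation hypothesis, dividing yields $\xi_1+\xi_4 = \xi_2+\xi_3+\mathcal{O}(\delta)$. Combined with the linear relation, this gives $\xi_1 = \xi_3+\mathcal{O}(\delta)$ and $\xi_2=\xi_4+\mathcal{O}(\delta)$.

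If $\epsilon_1 = -\epsilon_2$, the relation becomes $\xi_1^2-\xi_3^2 = \xi_2^2-\xi_4^2+\mathcal{O}(\delta)$. Write $\eta = \xi_1-\xi_3 = \xi_4-\xi_2$. Then
\begin{equation*}
\eta(\xi_1+\xi_3) = -\eta(\xi_2+\xi_4)+\mathcal{O}(\delta),
\end{equation*}
so $\eta\,(\xi_1+\xi_3+\xi_2+\xi_4) = \mathcal{O}(\delta)$. Now $\xi_1+\xi_2$ and $\xi_3+\xi_4$ lie within $O(\delta)$ of $\pi_1\operatorname{supp}\hat F_1 + \pi_1\operatorname{supp}\hat F_2$. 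The second hypothesis $\operatorname{dist}(\pi_1\operatorname{supp}\hat F_1, -\pi_1\operatorname{supp}\hat F_2)\geq 1/16$ says exactly that this sumset avoids a $1/16$-neighbourhood of $0$. Hence $|\xi_1+\xi_2+\xi_3+\xi_4| = 2|\xi_1+\xi_2| \gtrsim 1/16 - O(\delta) \gtrsim 1$, and therefore $|\eta|\lesssim\delta$, which is the required conclusion.

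The main obstacle is the bookkeeping of the $O(\delta)$ enlargements. The $\xi_j$ lie in $2\theta_j$ rather than in the supports themselves, so the separation constants must survive the enlargement; this is why we assume $\delta<1/32$. One must also check that the $\mathcal{O}(\delta)$ error in the quadratic relation comes only from the vertical neighbourhood thickness $\delta$. This holds because the projections onto the first coordinate are sharp cutoffs to $\theta$, and the curvature of $\pm\xi^2$ is bounded on $(-3,3)$.
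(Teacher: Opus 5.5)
Your proof is correct and is essentially the paper's argument. The equal-sign case is the factorization $(\xi_1-\xi_4)(\xi_1+\xi_4)=(\xi_3-\xi_2)(\xi_3+\xi_2)+\mathcal{O}(\delta)$ from the cubic proof, and your mixed-sign identity $\eta\,(\xi_1+\xi_2+\xi_3+\xi_4)=\mathcal{O}(\delta)$ is the paper's step of dividing $(\xi_1-\xi_2)(\xi_1+\xi_2)=(\xi_3-\xi_4)(\xi_3+\xi_4)+\mathcal{O}(\delta)$ by $\xi_1+\xi_2$, which the second separation hypothesis bounds below (the cutoffs $1_\theta$ are sharp, so the $\xi_j$ lie in the supports of $\hat F_j$ themselves and the enlargement bookkeeping you worry about is not actually needed).
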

\begin{proof}
By the same initial steps like above, we find
\begin{equation*}
\int F_{1 \theta_1} F_{2 \theta_2} \overline{F}_{1 \theta_3} \overline{F}_{2 \theta_4} = 0
\end{equation*}
unless for $\xi_i \in \theta_i$ it holds
\begin{equation*}
\left\{ \begin{array}{cl}
\xi_1 + \xi_2 &= \xi_3 + \xi_4, \\
a_1 \xi_1^2 + a_2 \xi_2^2 &= a_1 \xi_3^2 + a_2 \xi_4^2 + \mathcal{O}(\delta)
\end{array} \right.
\end{equation*}
with
\begin{equation*}
a_i = \begin{cases}
1, &\quad \text{supp}(\hat{F}_i) \subseteq \mathcal{N}_{\delta}(\mathbb{P}^1), \\
-1, &\quad \text{supp}(\hat{F}_i) \subseteq \mathcal{N}_{\delta}(-\mathbb{P}^1).
\end{cases}
\end{equation*}
By symmetry it suffices to consider $(a_1,a_2) = (1,1)$ and $(a_1,a_2) = (1,-1)$. The first case was covered above.
When $(a_1,a_2)=(1,-1)$, rewriting the second equation and the separation of the Fourier support projected to the first coordinate gives
\begin{equation*}
(\xi_1-\xi_2)(\xi_1+\xi_2) = (\xi_3-\xi_4)(\xi_3+\xi_4) + \mathcal{O}(\delta) \Rightarrow \xi_1 - \xi_2 = \xi_3 - \xi_4 + \mathcal{O}(\delta).
\end{equation*}
Hence, $\text{dist}(\theta_1,\theta_3) + \text{dist}(\theta_2,\theta_4) \leq C \delta$ from which the claim is immediate.
\end{proof}

In \cite{Schippa2025Trilinear} was pointed out how the bilinear square function estimate, together with the continuous approximation, yields the following bilinear short-time Strichartz estimate for Schr\"odinger evolutions on tori for separated frequencies $N_1 \geq 8 N_2$:
\begin{equation*}
\| e^{it \partial_x^2} P_{N_1} u_1 e^{it \partial_x^2} P_{N_2} u_2 \|_{L^2_{t,x}([0,N_1^{-1}] \times \T)} \lesssim N_1^{-\frac{1}{2}} \| P_{N_1} u_1 \|_{L^2(\T)} \| P_{N_2} u_2 \|_{L^2(\T)}.
\end{equation*}
The following short-time bilinear estimate for quasi-periodic functions is the backbone of the low regularity well-posedness result:
\begin{proposition}
\label{prop:ShorttimeBilinearEstimateAiry}
Let $K \leq 2N$ and $I_j \subseteq (-3 N,3 N)$, $j=1,2$ intervals of length $K$. Let $\tilde{I}_j = \{ x \in \R_{>0} : x \in I_j \vee - x \in I_j \}$ and assume that $\text{dist}(\tilde{I}_1,\tilde{I}_2) \geq N/16$. Assume $u_1,u_2 \in H^0_\Lambda$. Let $N^{-2} \leq T \leq N^{-1}$. The following bilinear estimate holds:
\begin{equation}
\label{eq:BilinearStrichartzKdV}
\| P_{I_1} R_{M_1} e^{t \partial_x^3} u_1 P_{I_2} R_{M_2} e^{t \partial_x^3} u_2 \|_{L^2_t([0,T],\mathcal{L}^2_x)} \lesssim T^{\frac{1}{2}} M_{\min}^{\frac{\nu-1}{2}} \| u_1 \|_{H^0_\Lambda} \| u_2 \|_{H^0_\Lambda}.
\end{equation}
\end{proposition}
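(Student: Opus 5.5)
We reduce \eqref{eq:BilinearStrichartzKdV} to the bilinear C\'ordoba--Fefferman square function estimate for the cubic via the continuous approximation. By symmetry we assume $M_2 = M_{\min}$. We write
\begin{equation*}
P_{I_1} R_{M_1} e^{t\partial_x^3} u_1 \cdot P_{I_2} R_{M_2} e^{t \partial_x^3} u_2 = \sum_{k_1,k_2} a_{k_1} b_{k_2} e^{i (k_{1\omega}+k_{2\omega}) x + i t (k_{1\omega}^3 + k_{2 \omega}^3)},
\end{equation*}
with $a_{k_1} = \chi_{M_1}(\langle k_1 \rangle)\hat{u}_1(k_{1\omega})$ for $k_{1\omega} \in I_1$, and similarly $b_{k_2}$. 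The $\mathcal{L}^2_x$-norm of this sum is computed by orthogonality. Since $\omega$ is rationally independent, the exponentials $e^{i\lambda x}$ with $\lambda\in\Lambda$ distinct are orthonormal, and distinct $k\in\Z^\nu$ give distinct $k_\omega$. Hence the square of the $\mathcal{L}^2_x$-norm equals $\sum_{k\in \Z^{\nu}} |\sum_{k_1+k_2 = k} a_{k_1} b_{k_2} e^{it(k_{1\omega}^3+k_{2\omega}^3)}|^2$.

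We then rescale, writing $\xi = k_\omega/N$ and $s = N^3 t$, so that $s\in[0,N^3T]$ and the frequencies lie on $\Gamma_3$ with separation $\gtrsim 1/16$ in the sense of the theorem (in both the $\tilde I_j$ sense and with signs). To pass to $L^2(\R^2)$ we use the continuous approximation. The exponential sum is viewed as a function of $(s,y)$ after inserting a smooth cutoff in time of scale $N^3T$ and replacing the discrete spatial frequencies by bumps of width $\delta=(N^3T)^{-1}$. Equivalently, we multiply by a spatial weight on an interval of length $\delta^{-1}$ and average over translates, which reproduces the mean $\mathcal{L}^2$-norm in the limit. The Fourier support then lies in $\mathcal{N}_\delta(\Gamma_3)$, and the bilinear C\'ordoba--Fefferman estimate applies. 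It bounds the product by the product of square functions over $\delta$-intervals $\theta$. Undoing the scaling, these are intervals of length $N\delta = N^{-2}T^{-1}\le 1$ in $k_\omega$, since $T\ge N^{-2}$.

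Each piece $F_{j\theta}$ is then estimated pointwise in $L^\infty$ by Cauchy--Schwarz. For $u_2$, the number of $k_2$ with $\langle k_2\rangle\sim M_2$ and $k_{2\omega}$ in a fixed interval of length $\le 1$ is $\lesssim M_2^{\nu-1}$, because $\omega$ has a nonzero component and the hyperplane slab $\{k : |k\cdot\omega - c|\le 1\}$ contains $\lesssim M^{\nu-1}$ lattice points in a ball of radius $M$. This gives $|F_{2\theta}|\lesssim M_2^{\frac{\nu-1}{2}}\|\hat u_{2}|_\theta\|_{\ell^2}$. We keep the $u_2$ square function in $L^\infty$ and the $u_1$ square function in $L^2$. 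Then
\begin{equation*}
\Big\|\big(\sum_{\theta_2}|F_{2\theta_2}|^2\big)^{1/2}\Big\|_{L^\infty}\lesssim M_2^{\frac{\nu-1}{2}}\|u_2\|_{H^0_\Lambda},
\end{equation*}
and the $L^2_{t}\mathcal{L}^2_x$-norm over $[0,T]$ of the $u_1$ square function is $T^{1/2}\|u_1\|_{H^0_\Lambda}$ by orthogonality. This yields $T^{1/2}M_{\min}^{\frac{\nu-1}{2}}\|u_1\|\|u_2\|$.

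The main obstacle is the continuous approximation step: justifying that the mean-$\mathcal{L}^2$ norm on $[0,T]$ is controlled by the $\R^2$ square function estimate with $\delta$-neighbourhood of $\Gamma_3$ uniformly in the spatial averaging length. We must also check that the biorthogonality argument needs only the frequency separation and the $\mathcal{O}(\delta)$ error, which allows quasi-periodic, non-integer frequencies. A more direct alternative avoids the continuous approximation. One expands $\int_0^T|\cdot|^2dt$ with a smooth time cutoff, whose Fourier transform decays off $|\Delta|\lesssim T^{-1}$. This reduces the estimate to the discrete biorthogonality: given $k_1+k_2=k_3+k_4$ and $|k_{1\omega}^3+k_{2\omega}^3-k_{3\omega}^3-k_{4\omega}^3|\lesssim T^{-1}$, the separation forces $|k_{1\omega}-k_{3\omega}|\lesssim (N^2T)^{-1}\le 1$, exactly as in the proof of the cubic theorem. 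A Schur test with the counting bound $M_{\min}^{\nu-1}$ then concludes. I would try this discrete route first.
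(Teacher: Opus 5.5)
Your proposal is correct. Your first route is the paper's: rescale, pass to $\mathbb{R}^2$ by the continuous approximation, apply the bilinear C\'ordoba--Fefferman estimate for the cubic with $\delta=(TN^3)^{-1}$, and return to square functions over intervals of length $T^{-1}N^{-2}\le 1$.

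Only the endgame differs. The paper first reduces to $M_1=M_2$ by an almost orthogonal decomposition of the height of $u_1$ into cubes of side $M_{\min}$. It then uses H\"older and Minkowski in $L^4\times L^4$, with a Bernstein bound $M^{\frac{b}{4}}$ and a factor $T^{\frac14}$ for each function. You instead put the square function of the $M_{\min}$-factor in $L^\infty$ and the other one in $L^2$. The $L^\infty$ bound uses the same count of $\lesssim M^{\nu-1}$ lattice points in a unit slab that underlies the paper's Bernstein inequality. This gives $T^{\frac12}M_{\min}^{\frac{\nu-1}{2}}$ at once and makes the height decomposition unnecessary.

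Your discrete route is genuinely different and more elementary. It never leaves the exponential sum, so the limiting argument the paper imports from earlier work is not needed. The biorthogonality is run directly on the exact constraint $k_1+k_2=k_3+k_4$ in $\mathbb{Z}^\nu$, with an $\mathcal{O}(T^{-1})$ error in the cubic phases. Two details need fixing there.
\begin{itemize}
\item Take a cutoff $\psi$ with $|\psi|\gtrsim 1$ on $[0,1]$ and $\widehat{|\psi|^2}$ compactly supported, or else sum the Schwartz tails dyadically.
\item To obtain $M_{\min}$ rather than $M_1$, count the difference $h=k_3-k_1=k_2-k_4$. It satisfies $|h|\lesssim M_{\min}$ and $|h\cdot\omega|\lesssim 1$, so there are $\lesssim M_{\min}^{\nu-1}$ such $h$. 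Each contributes $\lesssim T\|a\|_{\ell^2}^2\|b\|_{\ell^2}^2$ by Cauchy--Schwarz. A naive Schur test that fixes $k_1$ and counts $k_3$ at height $M_1$ would only give $M_1^{\nu-1}$.
\end{itemize}

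The paper's route buys modularity: the same scheme runs verbatim with the Benjamin--Ono square function estimate. Your discrete computation adapts just as easily.
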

\begin{proof}
In the first step, we carry out an almost orthogonal decomposition in the height - to localize $u_i$ to frequency cubes $R_x$ in the height which are of minimal size.
Suppose by symmetry that $M_2 = M_{\min}$. We obtain
\begin{equation}
\label{eq:AlmostOrthogonalAiry}
\begin{split}
&\; \| P_{I_1} R_{M_1} e^{t \partial_x^3} u_1 P_{I_2} R_{M_2} e^{t \partial_x^3} u_2 \|_{L_t^2([0,N^{-1}],\mathcal{L}^2_x)} \\
&\lesssim \big( \sum_{x \in 2M_2 \Z^d} \| P_{I_1} R^x_{M_2} e^{t \partial_x^3} u_1 P_{I_2} R_{M_2} e^{t \partial_x^3} u_2 \|^2_{L_t^2([0,N^{-1}];\mathcal{L}^2_x)} \big)^{\frac{1}{2}}
\end{split}
\end{equation}
with $R_{M_2}^x$ denoting a Fourier projection of the frequency height to a cube of sidelength $M_2$ centered at $x \in M_2 \Z^d$.
By this it suffices to prove the estimate for frequency height projections to cubes of length $M_1 = M_2$. We still use the notation from \eqref{eq:BilinearStrichartzKdV} to lighten the notation.

Next, we want to apply the bilinear C\'ordoba--Fefferman square function estimate. To this end, we carry out a rescaling $t \to N^3 t$, $x \to N x$, $\xi \to \xi / N$, which restricts the frequencies to $(-3,3)$.
\begin{equation*}
\begin{split}
&\quad \| P_{I_1} R_{M_1} e^{t \partial_x^3} u_1 P_{I_2} R_{M_2} e^{t \partial_x^3} u_2 \|_{L_t^2([0,T], \mathcal{L}^2_x)} \\
&= \| P_{I'_1} R_{M_1'} e^{t \partial_x^3} u'_1 P_{I'_2} R_{M'_2} e^{t \partial_x^3} u'_2 \|_{L_t^2([0,T N^3], \mathcal{L}^2_x)}
\end{split}	
\end{equation*}
with the rescaled quantities indicated by $'$. The Jacobian is absorbed into the rescaled functions as we will later reverse the scaling. By this we dominate
\begin{equation*}
\begin{split}
&\quad \| P_{I'_1} R_{M_1'} e^{t \partial_x^3} u'_1 P_{I'_2} R_{M'_2} e^{t \partial_x^3} u'_2 \|^2_{L_t^2([0,TN^3], \mathcal{L}^2_x)} \\
&= \lim_{L \to \infty} \frac{1}{2NL} \int_{[0,TN^3] \times [-NL,NL]} \big| e^{t \partial_x^3} P_{I_1'} R_{M_1'}  u_1' e^{t \partial_x^3} P_{I_2'} R_{M_2'} u_2 \big|^2 dx dt.
\end{split}
\end{equation*}
Choose $L \geq 16N$, and let $w_A: \R \to \R$ be a function with compact Fourier support, which satisfies
$|w_A(x)| \gtrsim 1$ for $|x| \lesssim A$ and $\text{supp}(\hat{w}_A) \subseteq B(0,c A^{-1})$. We can dominate
\begin{equation*}
\begin{split}
&\; \int_{[0,TN^3] \times [-NL,NL]} \big| e^{t \partial_x^3} P_{I_1'} R_{M_2'}^{x'}  u_1' e^{t \partial_x^3} P_{I_2'} R_{M_2'} u_2 \big|^2 dx dt \\ &\lesssim \int_{\R^2} w^4_{T N^3}(t) w_{NL}^4(x) \big| \ldots \big|^2 dx dt = \int_{\R^2} |F_1 F_2|^2 dx dt,
\end{split}
\end{equation*}
letting $F_i = w_{TN^3}(t) w_{NL}(t) e^{t \partial_x^3} P_{I_i'} R_{M_{i}'}  u_i'$. 
The resulting expressions have Fourier support in $\mathcal{N}_{\delta}(\Gamma_3)$ and are amenable to the bilinear square function estimate based on $\text{dist}(\pi_1(\text{supp}(\hat{F}_1)),\pm \pi_1(\text{supp}(\hat{F}_2))) \geq \frac{1}{16}$. Consequently,
\begin{equation*}
\int_{\R^2} \big| F_1 F_2 \big|^2 dx dt \lesssim \int_{\R^2} \big( \sum_{\theta'_1 \in \mathbb{I}_{(TN^3)^{-1}}} |F_{1 \theta'_1}|^2 \big) \big( \sum_{\theta'_2 \in \mathbb{I}_{(TN^3)^{-1}}} | F_{2 \theta'_2}  |^2 \big). 
\end{equation*}
By the same limiting argument as in \cite{Schippa2025Quasiperiodic} and reversing the scaling, we find
\begin{equation*}
\begin{split}
&\quad \| P_{I_1} R_{M_1} e^{t \partial_x^3} u_1 P_{I_2} R_{M_2} e^{t \partial_x^3} u_2 \|_{L^2_t([0,T],\mathcal{L}^2_x)} \\
&\lesssim \big\| \big( \sum_{\theta_1 \in \overline{\mathbb{I}}_1} |P_{I_1} R_{M_1} e^{t \partial_x^3} P_{\theta_1} u_1 |^2 \big)^{1/2} \big( \sum_{\theta_2 \in \overline{\mathbb{I}}_2} | P_{I_2} R_{M_2} e^{t \partial_x^3} P_{\theta_2} u_1 |^2 \big)^{1/2} \big\|_{L^2_t(w_{T}(t),\mathcal{L}^2)}
\end{split}
\end{equation*}
with $\overline{\mathbb{I}}_1 = \{ T^{-1} N^{-2} (k,k+1] : k \in \Z \}$ and $P_{\theta_i}$ denotes the corresponding frequency projection.
Applying Minkowski's inequality and H\"older's inequality we find
\begin{equation*}
\begin{split}
&\quad \| \big( \sum_{\theta_1 \in \overline{\mathbb{I}}_1} |P_{I_1} R_{M_1} e^{t \partial_x^3} P_{\theta_1} u_{1 } |^2 \big)^{\frac{1}{2}} \big( \sum_{\theta_2 \in \overline{\mathbb{I}}_2} |P_{I_2} R_{M_2} e^{t \partial_x^3} P_{\theta_2} u_{2} |^2 \big)^{\frac{1}{2}} \big\|_{L^2_t(w_{T}(t), \mathcal{L}^2_x)} \\
&\lesssim \prod_{i=1}^2 \big( \sum_{\theta_i \in \overline{\mathbb{I}}_i} \| P_{I_i} R_{M_i} e^{t \partial_x^3} P_{\theta_i} u_{i } \|^2_{L_t^4(w_{T}(t);\mathcal{L}^4_x)} \big)^{\frac{1}{2}} 
\end{split}
\end{equation*}
with $w_{T}$ a function which satisfies $w_{T}(t) \leq C_m (1+T^{-1} |t|)^{-m}$ for any $m \geq 0$.
We count the frequencies $k_{\omega}$ at fixed height which are accummulating in unit intervals like in \cite{Schippa2025Quasiperiodic}. Once $b= \nu-1$ components are prescribed, the remaining one is determined up to finitely many values (depending on $\omega$), and we have the simple bound $\# \{ k \in \Z^{\nu} : k_\omega \in \theta, \, k \in R_{M} \} \lesssim M^{\nu-1} = M^b$ with $\theta \in \mathbb{I}_1$ and $R_M$ denoting a cube of length $M$, like in \cite{Schippa2025Quasiperiodic}. Applying Bernstein's inequality (cf. \cite{Schippa2025Quasiperiodic}) yields
\begin{equation*}
\| P_{N_i} R_{M_i} e^{t \partial_x^3} P_{\theta_i} u_{i} \|_{\mathcal{L}^4_x} \lesssim M_i^{\frac{b}{4}} \| P_{N_i} R_{M_i} P_{\theta_i} u_i \|_{H^0_\Lambda}.
\end{equation*}
(Trivially) integrating in time we find
\begin{equation*}
\| P_{I_i} R_{M_i} e^{t \partial_x^3} P_{\theta_i} u_{i } \|_{L_t^4(w_{T}(t);\mathcal{L}^4_x)} \lesssim T^{\frac{1}{4}} M_i^{\frac{b}{4}} \| P_{I_i} R_{M_i} P_{\theta_i} u_{i } \|_{H^0_\Lambda}.
\end{equation*}
The almost orthogonal decompositions in $\theta_i$ can be summed up without loss in $\mathcal{L}^2_x$, and the claim follows.
\end{proof}

We have the following corollary regarding High-Low-interactions, which is immediate from an additional almost orthogonal decomposition in frequency intervals:
\begin{corollary}
\label{cor:BilinearHighLow}
Let $N_2 \leq N_1/8$. With notation like above, the following estimate holds:
\begin{equation*}
\| P_{N_1} R_{M_1} e^{t \partial_x^3} u_1 P_{N_2} R_{M_2} e^{t \partial_x^3} u_2 \|_{L_t^2([0,T],\mathcal{L}^2_x)} \lesssim T^{\frac{1}{2}} M_{\min}^{\frac{b}{2}} \prod_{i=1}^2 \| P_{N_i} R_{M_i} u_i \|_{H^0_\Lambda}.
\end{equation*}
\end{corollary}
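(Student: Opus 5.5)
The plan is to reduce Corollary \ref{cor:BilinearHighLow} to Proposition \ref{prop:ShorttimeBilinearEstimateAiry} by cutting the high-frequency factor into intervals of length comparable to $N_2$, applying the proposition to each piece, and summing by almost orthogonality. Throughout, $T$ is taken in the admissible range $N^{-2} \leq T \leq N^{-1}$ with $N = N_1$.

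\textbf{Step 1: decomposition of the high frequency.} Set $K = N_1/4$, so that $K \leq 2N_1$. Cover the support of $\chi_{N_1}$, i.e.\ $\{N_1/2 \leq |\xi| \leq 2N_1\} \subseteq (-3N_1,3N_1)$, by finitely many intervals $J$ of length $K$; there are $O(1)$ of them. Write
\[
P_{N_1} R_{M_1} u_1 = \sum_J P_J P_{N_1} R_{M_1} u_1 .
\]
For the low frequency, cover $\{|\xi| \leq 2N_2\}$ by a single interval $I_2$ of length $\max(4N_2,K)$. If one wants intervals of length exactly $K$, split $I_2$ further; either way $I_2$ stays inside $(-N_1/4, N_1/4)$.

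\textbf{Step 2: separation condition.} By construction $\tilde J \subseteq [N_1/2 - K, 2N_1 + K]$ after symmetrization. Since $N_2 \leq N_1/8$, we have $\tilde I_2 \subseteq [0, N_1/4]$. Hence
\[
\operatorname{dist}(\tilde J, \tilde I_2) \geq N_1/16
\]
holds, with the interval lengths shrunk if needed, for instance to $K = N_1/8$. This is the one point that requires care: $J$ must not reach into $[-N_1/4, N_1/4]$. Choosing the $J$ adapted to $[N_1/2, 2N_1]$ and its reflection, rather than an arbitrary partition of $(-3N_1,3N_1)$, handles this.

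\textbf{Step 3: apply the proposition and sum.} For each pair $(J, I_2)$, Proposition \ref{prop:ShorttimeBilinearEstimateAiry} gives
\[
\| P_J R_{M_1} e^{t\partial_x^3} P_{N_1} u_1 \, P_{I_2} R_{M_2} e^{t\partial_x^3} P_{N_2} u_2 \|_{L^2_t([0,T],\mathcal{L}^2_x)} \lesssim T^{\frac12} M_{\min}^{\frac b2} \| P_J P_{N_1} R_{M_1} u_1 \|_{H^0_\Lambda} \| P_{N_2} R_{M_2} u_2 \|_{H^0_\Lambda}.
\]
The projections commute with $e^{t\partial_x^3}$, and $P_J$ is harmless in $H^0_\Lambda$. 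Since there are $O(1)$ pieces, the triangle inequality together with Cauchy--Schwarz, $\sum_J \|P_J f\| \lesssim \|f\|$, yields the claim.

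If one instead takes $K \sim N_2$ (genuinely many intervals), the output frequencies of the products occupy finitely overlapping intervals. The square sum is then controlled by orthogonality in $\mathcal{L}^2_x$, which is the ``additional almost orthogonal decomposition'' the statement alludes to. No step here is a real obstacle beyond the separation bookkeeping.
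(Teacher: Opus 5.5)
Your proposal is correct and is essentially the paper's argument: the paper only remarks that the corollary follows from Proposition \ref{prop:ShorttimeBilinearEstimateAiry} (with $N=N_1$) after an additional almost orthogonal decomposition into frequency intervals, and your splitting into boundedly many equal-length intervals $J\subseteq \pm[N_1/2,2N_1]$ and $I_2\subseteq[-N_1/4,N_1/4]$ meets the separation hypothesis. Because the proposition's bound does not depend on $K$, your choice $K\sim N_1$ with the triangle inequality suffices; the $K\sim N_2$ variant with orthogonality of the output frequencies, which you also mention, is the other natural reading of the paper's remark.
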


We record the following trilinear estimate in adapted function spaces:
\begin{proposition}
\label{prop:TrilinearEstimate}
Let $N_i, M_i$, $i=1,2,3$ and $u_i \in V^2_{Ai} H^0_{\Lambda}$, and let $\mathcal{I}_T$ be an interval with $|\mathcal{I}_T| \in [ N_{\max}^{-2}/4,4N_{\max}^{-1}]$. Then the following estimate holds:
\begin{equation*}
\big| \iint_{\mathcal{I}_T \times \R} P_{N_1} R_{M_1} u_1 P_{N_2} R_{M_2} u_2 P_{N_3} R_{M_3} u_3 d\bar{x} dt \big| \lesssim T M_{\min}^{\frac{b}{2}} \prod_{i=1}^3 \| P_{N_i} R_{M_i} u_i \|_{V^2_{Ai} H^0_\Lambda}.
\end{equation*}
\end{proposition}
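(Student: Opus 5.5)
The plan is to reduce the trilinear estimate to the bilinear Strichartz estimate of Corollary \ref{cor:BilinearHighLow} (or Proposition \ref{prop:ShorttimeBilinearEstimateAiry} in the high-high case), and then pass from free solutions to $V^2_{Ai}$ via the transfer principle and the $U^2$--$V^2$ interpolation.

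\textbf{Case analysis.} By the convolution constraint, the integral vanishes unless the two largest tangential frequencies are comparable. Order so that $N_1 \sim N_2 \gtrsim N_3$ and write $T = |\mathcal{I}_T|$.

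\emph{Case $N_3 \leq N_1/8$.} Pair $u_1$ with $u_3$ and $u_2$ with $u_3$ if needed. More precisely, apply Cauchy--Schwarz in $(t,\bar x)$ after placing the lowest-height factor into both bilinear products:
\begin{equation*}
\Big| \iint \ldots \Big| \leq \| P_{N_1} R_{M_1} u_1 \, P_{N_3} R_{M_3} u_3 \|_{L^2_t(\mathcal{I}_T,\mathcal{L}^2_x)} \, \| P_{N_2} R_{M_2} u_2 \|_{L^\infty_t \mathcal{L}^2_x}
\end{equation*}
is too lossy. Instead, pair $(u_1,u_3)$ and $(u_2,\cdot)$ using a decomposition of $u_3$'s Fourier support into two pieces so that each bilinear product is separated. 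Alternatively, if $M_3$ is not the minimum, first perform an almost orthogonal decomposition of the high frequencies $u_1,u_2$ into height cubes of size $M_{\min}$: by the convolution constraint $k_1+k_2+k_3=0$, cubes of $u_1$ and $u_2$ are paired. This gives $T^{1/2} M_{\min}^{b/2}$ for one bilinear factor. For the remaining factor, use the trivial bound $\|\cdot\|_{L^2_t \mathcal{L}^2_x} \lesssim T^{1/2}$ only after Bernstein, which costs nothing for the free evolution in $L^\infty_t\mathcal{L}^\infty$ only if we use both bilinear estimates. So the intended route is: split the integral as $\langle (u_1 u_3), \overline{u_2}\rangle$, and estimate $\|u_1u_3\|_{L^2}\lesssim T^{1/2}M_{\min}^{b/2}$ and $\|u_2\|_{L^2_t\mathcal{L}^2_x}\lesssim T^{1/2}$. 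This yields $T M_{\min}^{b/2}$, which is exactly the claimed bound. Here the $M_{\min}$ in the bilinear estimate is $\min(M_1,M_3)$, so orthogonality in height is needed when $M_2$ is the minimum, which is handled by the cube decomposition described above.

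\emph{Case $N_1\sim N_2\sim N_3$.} Decompose each frequency support into intervals of length $N/100$. Because $\xi_1+\xi_2+\xi_3=0$ with all $|\xi_i|\sim N$, some pair $(I_i,I_j)$ satisfies the separation $\text{dist}(\tilde I_i,\tilde I_j)\geq N/16$ (the symmetric condition excludes $\xi_i \approx \pm \xi_j$, and $\xi_i\approx -\xi_j$ would force $|\xi_k|\ll N$). Apply Proposition \ref{prop:ShorttimeBilinearEstimateAiry} to that pair and $L^2$ to the third factor, then sum the boundedly many interval combinations. The case where all $N_i\lesssim 1$ is handled trivially by Bernstein and Hölder in time.

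\textbf{Transfer to $V^2$.} The estimates above first hold for free solutions $e^{t\partial_x^3}$, and hence for $U^2_{Ai}$ atoms by the standard atomic argument, since $\|\cdot\|_{L^2_t}$ bounds transfer linearly. Passing to $V^2_{Ai}$ requires interpolating with a $U^p$, $p>2$, bound, which costs a logarithm. I expect this to be the main obstacle. To avoid the logarithm, I will use the trilinear duality structure directly: for the factor estimated only in $L^2_t\mathcal{L}^2_x$, $V^2\hookrightarrow L^\infty_t\mathcal{L}^2$ suffices. For the bilinear product, the bound on the length $T\in[N^{-2}/4,4N^{-1}]$ gives slack: the bilinear estimate holds with $T^{1/2}$ uniformly over intervals in this range. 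I will interpolate via \cite[Lemma~2.3]{ShorttimeFourierTransformRestriction}-type arguments between the $U^2$ estimate and a crude $U^4$ estimate (Hölder plus Bernstein, at the cost of a power of $N$ and $M$). If this fails, I will accept a logarithm $\log(N)$, which is harmless in the later applications, because it is absorbed by the $\varepsilon$-room in $s>1+b/2$.
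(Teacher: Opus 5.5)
Your case analysis, once the false starts are removed, is the paper's. For $N_{\min}\le N_{\max}/8$ you pair the low-frequency factor with one high-frequency factor via Corollary \ref{cor:BilinearHighLow}, after decomposing heights into cubes of side $M_{\min}$, and you put the remaining factor in $L^2_t\mathcal{L}^2_x$ to gain $T^{1/2}$. For comparable $N_i$ you cut into intervals of length $\sim N_{\max}$ and use $\xi_1+\xi_2+\xi_3=0$ to find a separated pair. The clean reason, as in the paper, is that two of the $\xi_i$ share a sign, so the third has modulus equal to their sum and is separated in modulus from each of them by $\gtrsim N_{\min}$.

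The genuine gap is the passage to $V^2_{Ai}$. The statement has no logarithm. Your concrete plan interpolates between the $U^2_{Ai}$ bound and a H\"older--Bernstein $U^4_{Ai}$ bound whose constant is worse by a power of $N$ and $M$. The interpolation lemma then only yields the $U^2$ constant times $\log(NM)$, and your fallback openly proves this weaker estimate. The logarithm is also not harmless downstream. In Propositions \ref{prop:EnergyEstimates} and \ref{prop:EnergyEstimateDifferences} the trilinear bound is applied with two factors at the high frequency $N$, measured exactly in $H^r$ (resp.\ $\mathcal{L}^2$). After summing over the $\sim TN^{3/2}$ time intervals no power of $N$ is left over. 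So a factor $\log N$ cannot be absorbed by the $\varepsilon$-room, which sits only on the low-frequency factor.

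The missing idea, which is what the paper uses, is that no interpolation is needed. In the proof of Proposition \ref{prop:ShorttimeBilinearEstimateAiry}, after the bilinear square function estimate and H\"older, the whole constant $T^{1/2}M^{b/2}$ comes from linear bounds for the pieces $P_\theta e^{t\partial_x^3}u_i$ in $L^4_t\mathcal{L}^4_x$. These bounds are obtained by Bernstein's inequality and trivial integration in time. They scale like $|I|^{1/4}$, and the fourth power of the $L^4_t$ norm is additive over time intervals. That is exactly what $U^4$ atoms need: $\sum_k |I_k|\,\|\phi_k\|^4\le T$ whenever $\sum_k\|\phi_k\|^4\le 1$. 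So the paper records the bilinear estimate with $U^4_{Ai}$ norms on the right, with the same constant. It then uses the embedding $V^2_{Ai}\hookrightarrow U^4_{Ai}$, while the third factor is handled by $V^2_{Ai}\hookrightarrow L^\infty_t\mathcal{L}^2_x$, as you say.

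You did isolate the relevant fact: the bilinear bound holds with $T^{1/2}$ uniformly over the admissible interval lengths. But you must use it to get a $U^4$ estimate with the \emph{same} constant, not feed it into an interpolation with a lossy one. Making this fully rigorous needs a word about atom steps shorter than $N_{\max}^{-2}$, where Proposition \ref{prop:ShorttimeBilinearEstimateAiry} is not directly available. Still, this is the mechanism by which the paper avoids the logarithm.
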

\begin{proof}
Under the assumptions of Proposition \ref{prop:ShorttimeBilinearEstimateAiry} on $K,N,I_j,\tilde{I}_j$ and $\text{dist}(\tilde{I}_1,\tilde{I}_2) \geq N_{\max}/16$ we record that for $u_1,u_2 \in V^2_{Ai} H^0_\Lambda$ the argument yields for $M_1 = M_2$
\begin{equation*}
\begin{split}
\| P_{I_1} R_{M_1} e^{t \partial_x^3} u_1 P_{I_2} R_{M_2} e^{t \partial_x^3} u_2 \|_{L^2_t( \mathcal{I}_T; \mathcal{L}^2_x)} &\lesssim T^{\frac{1}{2}} M^{\frac{b}{2}} \prod_{i=1}^2 \| P_{I_i} R_{M_i} u_i \|_{U^4_{Ai} H^0_\Lambda} \\
&\lesssim T^{\frac{1}{2}} M^{\frac{b}{2}} \prod_{i=1}^2 \| P_{I_i} R_{M_i} u_i \|_{V^2_{Ai} H^0_\Lambda}.
\end{split}
\end{equation*}
$V^2_{Ai}$ respects the almost orthogonal decomposition \eqref{eq:AlmostOrthogonalAiry} for $M_{\min} < M_{\max}$.
For $N_{\min} \leq N_{\max}/8$ the preceding estimate yields the claim by an almost orthogonal decomposition of the frequency height into cubes of size $M_{\min}$ and H\"older's inequality.
We turn to the case $N_{\min} \geq N_{\max}/8$, in which case we partition the frequency support into $I_i$ intervals of length $N_{\max}/32$ and can again suppose by almost orthogonality that $M_1=M_2=M_3$:
\begin{equation*}
\begin{split}
&\quad \big| \iint_{\mathcal{I}_T \times \R} P_{N_1} R_{M_1} u_1 P_{N_2} R_{M_2} u_2 P_{N_3} R_{M_3} u_3 d\bar{x} dt \big| \\
 &\leq \sum_{I_1,I_2,I_3} \big| \iint_{\mathcal{I}_T \times \R} P_{I_1} R_{M_1} u_1 P_{I_2} R_{M_2} u_2 P_{I_3} R_{M_3} u_3 d\bar{x} dt \big|.
\end{split}
\end{equation*}
It suffices to estimate a single expression in the above sum as claimed, as the number of non-trivial terms is clearly bounded.
We note that by convolution constraint
\begin{equation*}
\iint_{\mathcal{I}_T \times \R} P_{I_1} R_{M_1} u_1 P_{I_2} R_{M_2} u_2 P_{I_3} R_{M_3} u_3 d\bar{x} dt = 0,
\end{equation*}
unless there are $\xi_i \in I_i$ for which $\xi_1+\xi_2+\xi_3 = 0$. 
We consider a non-trivial expression. Suppose by symmetry that $I_1,I_2 \subseteq \R_{>0}$ and necessarily, $I_3 \subseteq \R_{<0}$. But $I_1+I_2$ is an interval of length $N_{\max}/16$ with $\inf (I_1+I_2) = \inf I_1 + \inf I_2$. Hence, $\text{dist}(-I_3,I_1), \text{dist}(-I_3,I_2) \geq N_{\max} / 16$, for which reason the bilinear estimate from Proposition \ref{prop:ShorttimeBilinearEstimateAiry} is applicable to $P_{I_2} R_{M_2} u_2 P_{I_3} R_{M_3} u_3$. The claim follows from H\"older's inequality.
\end{proof}

The following will be used to estimate boundary terms:
\begin{proposition}
\label{prop:AiryBoundaryTerms}
Let $N_i, R_j$, $i,j=1,2,3$ and $u_i \in V^2_{Ai} H^0_\Lambda$, and let $\mathcal{I}_T$ be an interval with $|\mathcal{I}_T| \leq N_{\max}^{-2}$. Then the following estimate holds:
\begin{equation*}
\big| \iint_{\mathcal{I}_T \times \R} P_{N_1} R_{M_1} u_1 P_{N_2} R_{M_2} u_2 P_{N_3} R_{M_3} u_3 d\bar{x} dt \big| \lesssim T N_{\min}^{\frac{1}{2}} M_{\min}^{\frac{b}{2}} \prod_{i=1}^3 \| P_{N_i} R_i u_i \|_{V^2_{Ai} H^0_\Lambda}.
\end{equation*}
\end{proposition}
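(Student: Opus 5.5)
We will prove the estimate on the short interval $|\mathcal{I}_T| \leq N_{\max}^{-2}$, where no dispersive gain is available. Instead, we use Bernstein-type bounds for the lowest frequency together with the counting of lattice points at fixed height. We first reduce to $M_1=M_2=M_3=M_{\min}$ by the almost orthogonal decomposition of the height into cubes of sidelength $M_{\min}$, exactly as in \eqref{eq:AlmostOrthogonalAiry}. This is justified because $V^2_{Ai}H^0_\Lambda$ respects this decomposition, and Cauchy--Schwarz then sums the pieces. Suppose by symmetry that $N_3 = N_{\min}$.

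The main step is to place $u_1,u_2$ in $L^\infty_t \mathcal{L}^2_x$ and $u_3$ in $L^\infty_t\mathcal{L}^\infty_x$ (or $L^1_t\mathcal{L}^\infty_x$ on $\mathcal{I}_T$). By Hölder's inequality,
\begin{equation*}
\big| \iint_{\mathcal{I}_T \times \R} \prod_{i=1}^3 P_{N_i} R_{M_i} u_i \, d\bar{x}\, dt \big| \leq T \sup_{t \in \mathcal{I}_T} \| P_{N_3} R_{M_3} u_3(t) \|_{\mathcal{L}^\infty_x} \prod_{i=1}^2 \sup_{t} \| P_{N_i} R_{M_i} u_i(t) \|_{\mathcal{L}^2_x}.
\end{equation*}
The $\mathcal{L}^2$ factors are bounded by the $V^2_{Ai}H^0_\Lambda$ norms, since $V^2 \hookrightarrow L^\infty$ and the Airy group is unitary on $H^0_\Lambda$. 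For the $\mathcal{L}^\infty$ factor we use Bernstein's inequality: by Cauchy--Schwarz in the Fourier coefficients, $\|f\|_{\mathcal{L}^\infty} \leq (\#\sigma(f))^{1/2}\|f\|_{H^0_\Lambda}$. The number of $k \in \Z^\nu$ in a cube of side $M_{\min}$ with $k_\omega \in I_{N_3}$ is at most $N_3 M_{\min}^{b}$. This follows by covering $I_{N_3}$ with $\lesssim N_3$ unit intervals and applying the count $\lesssim M^{b}$ per unit interval from the proof of Proposition \ref{prop:ShorttimeBilinearEstimateAiry}. This yields exactly the factor $N_{\min}^{1/2}M_{\min}^{b/2}$, and the estimate follows.

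There is one subtlety. The minimal height cube need not be associated to the lowest tangential frequency: if $M_{\min} = M_1$ while $N_{\min} = N_3$, then we should count the frequencies of $u_3$ localized to a height cube of size $M_{\min}$. After the almost orthogonal reduction all three heights are comparable cubes of size $M_{\min}$, so this is harmless. Controlling the orthogonal sum requires placing Cauchy--Schwarz correctly over the cubes. Since the convolution constraint forces the cube centers to satisfy $x_1+x_2+x_3 \approx 0$, we sum over pairs of cubes with the third determined up to $O(1)$ choices. We expect this bookkeeping to be the only mildly delicate point.
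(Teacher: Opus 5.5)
Your argument is essentially the paper's. Both reduce the heights to cubes of side $M_{\min}$ by almost orthogonality, integrate in time, and apply Bernstein's inequality with the lattice-point count $\lesssim N_{\min} M_{\min}^{b}$. The paper additionally reduces the tangential frequencies to a common scale. You are right that this is unnecessary when only the factor with frequency $N_{\min}$ is placed in $\mathcal{L}^\infty_x$.

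The step that fails as you justify it is the summation over cubes. $V^2_{Ai} H^0_\Lambda$ does \emph{not} satisfy $\sum_x \| R^x_{M_{\min}} u \|^2_{V^2_{Ai} H^0_\Lambda} \lesssim \| u \|^2_{V^2_{Ai} H^0_\Lambda}$; this square-summability holds for $U^2$, not for $V^2$. The inequality even fails with $L^\infty_t \mathcal{L}^2_x$ norms on the left. For a counterexample, put orthonormal profiles in distinct cubes. Multiply them by tent functions in time of width $2^{-j}$ and height $2^{-j/2}$, with $2^j$ tents at each scale $j=1,\dots,J$. The left side is then $\geq J$. On the other hand, a partition interval of length $\ell$ contributes only $\lesssim \sum_j \min(2^{-j}, 2^{j}\ell^2) \lesssim \ell$ to the $V^2$ sum, so $\|u\|_{V^2} \lesssim 1$. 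Consequently you cannot bound each triple of cubes by $T$ times the suprema in time of its pieces and sum afterwards.

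The fix is to reverse the order. Fix $t$ and set $f_i = P_{N_i} R_{M_i} u_i(t)$. Apply H\"older, Bernstein for the $N_{\min}$-factor, and Cauchy--Schwarz over the matched cubes; the orthogonality you need is now just Plancherel in $\mathcal{L}^2_x$. This gives $\big| \int f_1 f_2 f_3 \, d\bar{x} \big| \lesssim (N_{\min} M_{\min}^b)^{1/2} \prod_{i} \| f_i \|_{\mathcal{L}^2_x}$. Only then integrate over $\mathcal{I}_T$ and use $\sup_t \| P_{N_i} R_{M_i} u_i(t) \|_{\mathcal{L}^2_x} \lesssim \| P_{N_i} R_{M_i} u_i \|_{V^2_{Ai} H^0_\Lambda}$. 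The paper's one-line proof rests on the same almost-orthogonality remark, so this ordering is what makes either version rigorous.
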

\begin{proof}
By the above almost orthogonality arguments, we can suppose that $A_i=A_j$ for $i,j \in \{1,2,3\}$ and $A \in \{M,N\}$. Then the claim is immediate from integrating in time and Bernstein's inequality.
\end{proof}

\section{Short-time estimates}
\label{section:NonlinearEstimates}
\subsection{Short-time nonlinear estimates}

This section is devoted to the control of the nonlinearity in short-time function spaces. In the remainder of the section let $T \in (0,1]$. The main result reads as follows:
\begin{proposition}
\label{prop:NonlinearEstimate}
Let $r' \geq r > \frac{b}{2}$, $u_1 \in F^r(T)$, $u_2 \in F^{r'}(T)$. The following holds:
\begin{equation*}
\| \partial_x (u_1 u_2) \|_{\mathcal{N}^{r'}(T)} \lesssim T^{\frac{1}{4}} \| u_1 \|_{F^{r}(T)} \| u_2 \|_{F^{r'}(T)}.
\end{equation*}
For $r' > \frac{b}{2}$, $u_1 \in F^{r'}(T)$, $u_2 \in F^0(T)$ the following holds:
\begin{equation*}
\| \partial_x (u_1 u_2) \|_{\mathcal{N}^{0}(T)} \lesssim T^{\frac{1}{4}} \| u_1 \|_{F^{r'}(T)} \| u_2 \|_{F^{0}(T)}.
\end{equation*}
\end{proposition}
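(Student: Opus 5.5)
We reduce the claim to dyadic blocks. By the definition of $\mathcal{N}^{r'}(T)$ through Littlewood--Paley pieces, it suffices to bound, for each output frequency $N$,
\[
\|P_N \partial_x(P_{N_1}u_1\, P_{N_2}u_2)\|_{\mathcal{N}_{N,r'}(T)}
\]
and then sum over $(N_1,N_2)$. We split into three regimes:
\begin{itemize}
\item High$\times$Low$\to$High: $N_{\min}\le N_{\max}/8$ and $N\sim N_{\max}$.
\item High$\times$High$\to$High: all frequencies comparable.
\item High$\times$High$\to$Low: $N\ll N_1\sim N_2$.
\end{itemize}
We further decompose each factor and the output in height, $R_{M_i}$, which is where the weight $\langle k\rangle^{r}$ lives. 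The height convolution constraint $k=k_1+k_2$ forces either $M_{\max}\sim M_{\mathrm{med}}$, or the output height to be comparable to the larger input height.

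For a fixed interval $I$ of length $N^{-3/2}$ (or $T$), we use the duality $\|f\|_{DU^2}=\sup_{\|v\|_{V^2_0}=1}|\iint f\bar v|$. This turns the $\mathcal{N}_N$-norm into a trilinear form on $I\times\R$ against $v\in V^2_{Ai}$. The factor $\partial_x$ contributes $N$.

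The inputs are controlled in $F_{N_i}$, i.e. in $U^2$ on intervals of length $N_i^{-3/2}$. When $N_i\ll N$ these intervals are longer than $I$, so restriction to $I$ is harmless. When $N_i\gg N$ (High$\times$High$\to$Low), we must cover $I$ by roughly $(N_{\max}/N)^{3/2}$ subintervals of length $N_{\max}^{-3/2}$. This costs a factor $(N_{\max}/N)^{3/2}$, partly offset because the output norm is measured at the low frequency $N$.

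On each resulting interval $\mathcal{I}$, of length $N_{\max}^{-3/2}\in[N_{\max}^{-2},N_{\max}^{-1}]$, we apply Proposition~\ref{prop:TrilinearEstimate}. It gives the bound $|\mathcal{I}|M_{\min}^{b/2}\prod\|\cdot\|_{V^2_{Ai}}$, and we use $U^2\hookrightarrow V^2$. When $|I|=T<N^{-3/2}$ and the interval is shorter than $N_{\max}^{-2}$, we instead use Proposition~\ref{prop:AiryBoundaryTerms}.

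In the High$\times$Low case, the output $N\sim N_1$ and $|I|=N^{-3/2}$ give
\[
N\cdot N^{-3/2}\cdot M_{\min}^{b/2}=N^{-1/2}M_{\min}^{b/2}.
\]
In the High$\times$High$\to$Low case we get
\[
N\cdot (N_1/N)^{3/2} N_1^{-3/2} M_{\min}^{b/2}=N^{-1/2}M_{\min}^{b/2}.
\]
This decay in $N$ is what beats the derivative loss. The factor $T^{1/4}$ comes from leeway: for $N\gtrsim T^{-2/3}$ we trade $N^{-1/2}\le T^{1/4}N^{-1/8}$, say, and for $N\lesssim T^{-2/3}$ the intervals have length $T$ and the trilinear bound directly gives $T\cdot N$, again producing a positive power of $T$.

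Next we handle the height summation. The factor $M_{\min}^{b/2}$ is absorbed by the weight $\langle k\rangle^{r}$ on the factor of smallest height, using $r>b/2$: we have $M_{\min}^{b/2-r}$ summable. The weight $M^{r'}$ on the output is transferred to the input of comparable or larger height. For the first estimate, that input is placed in $F^{r'}$ and the other in $F^{r}$. For the second estimate, which has output weight $0$, we put $r'$ on $u_1$. In the case where $u_2$ has the minimal height, we still get $M_{\min}^{b/2}\le M_{u_1}^{b/2}$, which is absorbed by $r'>b/2$.

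The dyadic sums over $N_1,N_2,N$ close by Cauchy--Schwarz, using the $N^{-1/2}$-type decay and the $\ell^2$ structure of $F^r$.

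The main obstacle is the High$\times$High$\to$Low interaction. There the resonance can be small and no bilinear gain beyond Proposition~\ref{prop:TrilinearEstimate} is available. We must check that the interval-splitting loss $(N_{\max}/N)^{3/2}$ is exactly compensated, and that summation over $N\ll N_1$ with output weight still converges. If the exponents balance only borderline, we would first try to exploit the $\ell^2$ summation in $N_1$ together with the height decay to extract a small power.
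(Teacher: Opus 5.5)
Your argument follows the paper's proof. You reduce to the dyadically localized estimate, dualize the $DU^2_{Ai}$ norm against $V^2_{Ai}$, and in the High$\times$High$\to$Low case split the output interval into $(N_1/N)^{3/2}$ pieces of length $N_1^{-3/2}$. You then apply Proposition~\ref{prop:TrilinearEstimate} or Proposition~\ref{prop:AiryBoundaryTerms} according to the interval length and turn the resulting gain into $T^{1/4}$. Your bookkeeping is more explicit than the paper's ``dyadic summations'': keeping a factor $N^{-1/8}$ is exactly what closes the $\ell^2$ sums over tangential frequencies. The time-localization and tangential part is fine.

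The step that does not work as written is where you place the height weight in the first estimate. You say the input whose height is comparable to the output ``is placed in $F^{r'}$'', but the statement fixes $u_1\in F^{r}$ and $u_2\in F^{r'}$; you cannot choose. Take the configuration $M_1\sim M_3\gg M_2=M_{\min}$. There you are left with
$M_3^{r'}M_2^{b/2}\sim M_1^{r'-r}M_2^{b/2-r'}\cdot M_1^{r}M_2^{r'}$,
and the factor $M_1^{r'-r}$ is unbounded when $r'>r$.

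This cannot be repaired, because for $r'>r$ and $\nu\ge 2$ the asymmetric estimate is false. Take $u_2\equiv c$ and let $u_1$ be the free Airy evolution of $e^{ik_\omega x}$ with $k_\omega\in[\frac12,1]$ and $|k|=M\to\infty$. Then $\|\partial_x(u_1u_2)\|_{\mathcal{N}^{r'}(T)}\sim |c|\,T\,M^{r'}$, while the right-hand side is $\sim T^{1/4}|c|\,M^{r}$.

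What your argument proves is the symmetrized bound $T^{1/4}\big(\|u_1\|_{F^r(T)}\|u_2\|_{F^{r'}(T)}+\|u_1\|_{F^{r'}(T)}\|u_2\|_{F^{r}(T)}\big)$, and you should state it that way. The paper's proof is silent on this point as well. The symmetrized form is all that is needed, since the first estimate is only used with $u_1=u_2=u$. Your treatment of the second estimate is correct: there $r'$ sits on $u_1$ and $M_{\min}\le M_1$ in every configuration.
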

\begin{proof}
After plugging in the definition of the function spaces and height- and frequency-localization, it suffices to show the following estimate with \\ $|\mathcal{I}_{T_3}| = \min( T , N_3^{-\frac{3}{2}})$ and $|\mathcal{I}_{T_i}| \leq \min(T,N_i^{-\frac{3}{2}})$:
\begin{equation}
\label{eq:LocalizedNonlinearEstimate}
\begin{split}
&\quad \| P_{N_3} R_{M_3} \partial_x (P_{N_1} R_{M_1} u_1 P_{N_2} R_{M_2} u_2) \|_{DU_{Ai}^2(\mathcal{I}_{T_3}, \mathcal{L}^2_x)} \\
&\lesssim T^{\frac{1}{4}} M_{\min}^{\frac{b}{2}} \prod_{i=1}^2 \| P_{N_i} R_{M_i} u_i \|_{U^2_{Ai}(\mathcal{I}_{T_i}, \mathcal{L}^2_x)}.
\end{split}
\end{equation}
From this estimate the claim follows from dyadic summations.

We first handle the contribution of interactions with $N_3 \geq \max(N_1,N_2)/8$. In this case it suffices to consider a finite partition of $\mathcal{I}_{T_3}$ into intervals $\mathcal{I}_T$ of length $T \wedge N^{-3/2}_{\max}$:
\begin{equation*}
\begin{split}
&\quad \| P_{N_3} R_{M_3} \partial_x (P_{N_1} R_{M_1} u_1 P_{N_2} R_{M_2} u_2) \|_{DU^2_{Ai}(\mathcal{I}_{T},\mathcal{L}^2_x)} \\
 &= \sup_{\| v \|_{V^2_{Ai} \mathcal{L}^2_x} = 1} \big| \iint_{\mathcal{I}_{T} \times \R} P_{N_3} R_{M_3} \partial_x v P_{N_1} R_{M_1} u_1 P_{N_2} R_{M_2} u_2 d\overline{x} dt \big|.
\end{split}
\end{equation*}
For $N_3^{-2} \leq |\mathcal{I}_T| \leq N_3^{-\frac{3}{2}}$ we apply Proposition \ref{prop:TrilinearEstimate} to find 
\begin{equation*}
\begin{split}
&\quad \sup_{\| v \|_{V^2_{Ai} \mathcal{L}^2_x} = 1} \big| \iint_{\mathcal{I}_{T} \times \R} P_{N_3} R_{M_3} \partial_x v P_{N_1} R_{M_1} u_1 P_{N_2} R_{M_2} u_2 d\overline{x} dt \big| \\
&\lesssim N_3 |\mathcal{I}_{T_3}| M_{\min}^{\frac{b}{2}} \prod_{i=1}^2 \| P_{N_i} R_{M_i} u_i \|_{U^2_{Ai} H^0_\Lambda} \lesssim T^{\frac{1}{4}} M_{\min}^{\frac{b}{2}} \prod_{i=1}^2 \| P_{N_i} R_{M_i} u_i \|_{U^2_{Ai} H^0_\Lambda}.
\end{split}
\end{equation*}
For $N_3^{-2} \geq |\mathcal{I}_{T_3}|$ we can apply Proposition \ref{prop:AiryBoundaryTerms}, which yields the claim.

We turn to  $N_3 \leq \max( N_1,N_2)/8$. The difference to the previous cases is that the expression is merely localized to times $\min(N_3^{-3/2},T)$, but it is necessary to estimate the functions $P_{N_i} u_i$ on time-intervals of length $\min(N_1^{-3/2},T)$. We increase the time localization accordingly and then apply Proposition \ref{prop:TrilinearEstimate} or \ref{prop:AiryBoundaryTerms}. 

When $|\mathcal{I}_{T_3}| \sim N_3^{-\frac{3}{2}} \leq T$, we decompose $\mathcal{I}_{T_3}$ into $\big( \frac{N_1}{N_3} \big)^{\frac{3}{2}}$ intervals of length $N_1^{-\frac{3}{2}}$. Then we find by applying Proposition \ref{prop:TrilinearEstimate}
\begin{equation*}
\begin{split}
&\quad \sup_{\| v \|_{V^2_{KdV} \mathcal{L}^2_x} = 1} \big| \iint P_{N_3} R_{M_3} \partial_x v P_{N_1} R_{M_1} u_1 P_{N_2} R_{M_2} u_2 d\overline{x} dt \big| \\
&\lesssim N_3 N_1^{-\frac{3}{2}} \big( \frac{N_1}{N_3} \big)^{\frac{3}{2}} M_{\min}^{\frac{b}{2}} \prod_{i=1}^2 \| P_{N_i} R_{M_i} u_i \|_{U^2_{Ai}(\mathcal{I}_{T_i}; H^0_\Lambda)} \\
&\lesssim T^{\frac{1}{4}} M_{\min}^{\frac{b}{2}} \prod_{i=1}^2 \| P_{N_i} R_{M_i} u_i \|_{U^2_{Ai}(\mathcal{I}_{T_i}; H^0_\Lambda)}.
\end{split}
\end{equation*}

When $|\mathcal{I}_{T_3}| \sim T$ (because $N_3^{-\frac{3}{2}} \geq T$) and $N_1^{-\frac{3}{2}} \leq T$, we find by partitioning into intervals of length $N_1^{-3/2}$:
\begin{equation*}
\begin{split}
&\quad \sup_{\| v \|_{V^2_{KdV} H^0_\Lambda} = 1} \big| \iint P_{N_3} R_{M_3} \partial_x v P_{N_1} R_{M_1} u_1 P_{N_2} R_{M_2} u_2 d\overline{x} dt \big| \\
&\lesssim T N_1^{\frac{3}{2}} N_1^{-\frac{3}{2}} M_{\min}^{\frac{b}{2}} \prod_{i=1}^2 \| P_{N_i} R_{M_i} u_i \|_{U^2_{Ai}(\mathcal{I}_{T_i}; H^0_\Lambda)}.
\end{split}
\end{equation*}

Finally, when $|\mathcal{I}_{T_3}| \sim T \leq N_1^{-\frac{3}{2}}$, there is no further partitioning into smaller intervals. For $T \in (N_1^{-2},N_1^{-\frac{3}{2}}]$ we apply Proposition \ref{prop:TrilinearEstimate} and for $T \leq N_1^{-2}$ we apply Proposition \ref{prop:AiryBoundaryTerms}. In both cases the claim is immediate.
This finishes the proof.
\end{proof}

\subsection{Short-time energy estimates}

We turn to the proof of energy estimates. For solutions to \eqref{eq:KdVIntro} we shall take advantage of the conservative derivative nonlinearity and integrate by parts. We prove:
\begin{proposition}
\label{prop:EnergyEstimates}
Let $u \in C([0,T],H^{10\nu}_\Lambda) \cap C^1((0,T),H^{10 \nu-3}_{\Lambda})$ be a solution to \eqref{eq:KdVIntro}. The following estimate holds for $r' \geq r > \frac{b}{2} + 1$:
\begin{equation}
\label{eq:EnergyEstimateSolution}
\| u \|^2_{E^r(T)} \leq \| u(0) \|^2_{\mathcal{H}^r_\Lambda} + C T^{\frac{1}{2}} \|u \|^2_{F^r(T)} \| u \|_{F^{r'}(T)}.
\end{equation}
\end{proposition}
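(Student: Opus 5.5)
We follow the standard short-time energy method (cf. \cite{IonescuKenigTataru2008,ShorttimeFourierTransformRestriction}). For each dyadic $N$ we differentiate $\|P_N u(t)\|_{H^r_\Lambda}^2$ in time. Since $\partial_x^3$ is skew-adjoint, only the nonlinearity contributes, and by the fundamental theorem of calculus
\begin{equation*}
\|P_N u(t)\|_{H^r_\Lambda}^2 = \|P_N u(0)\|_{H^r_\Lambda}^2 + 2\int_0^t \int_{\R} \langle \nabla_k\rangle^{r} P_N u \, \langle \nabla_k\rangle^{r} P_N (u \partial_x u) \, d\bar{x}\, ds,
\end{equation*}
where $\langle \nabla_k \rangle^r$ denotes the height multiplier $\langle k\rangle^r$. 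The regularity assumption on $u$ justifies this identity. We take the supremum in $t$ and sum over $N$. It then remains to bound $\sum_N \sup_t |\int_0^t \ldots|$ by $T^{1/2}\|u\|_{F^r(T)}^2\|u\|_{F^{r'}(T)}$.

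We decompose $u = \sum_{N_i,M_i} P_{N_i}R_{M_i}u$ in the trilinear integral and split into the usual interaction cases.

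\emph{High--High--Low, with the derivative on a low frequency.} When $N_1\sim N_2\gtrsim N_3$ and the derivative can be moved onto the lowest frequency, we get a factor $N_{\min}$. We partition $[0,t]$ into $\sim T N_{\max}^{3/2}$ intervals of length $N_{\max}^{-3/2}$, or use a single interval if $T$ is smaller. On each interval we apply Proposition \ref{prop:TrilinearEstimate}, or Proposition \ref{prop:AiryBoundaryTerms} for very short intervals, to obtain $N_{\max}^{-3/2}M_{\min}^{b/2}$. The time-interval count cancels the $N_{\max}^{-3/2}$, leaving $T N_{\min} M_{\min}^{b/2}$. We must check that this is summable using the $F$-norms. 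Here the condition $r>\frac b2+1$ enters: we place $M_{\min}^{b/2}$ together with the extra factor from height weights onto the function measured in $F^{r}$.

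\emph{High--High--High.} When all $N_i$ are comparable there is no gain from a small frequency. We exploit that the height weights are equal up to commutators, and symmetrize.

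\emph{High--Low with the derivative on a high frequency.} The main case is $P_N u \, P_N(P_{N_2}u\,\partial_x P_{N_1}u)$ with $N_1\sim N\gg N_2$. Here we integrate by parts and write
\begin{equation*}
\int \langle \nabla_k\rangle^r P_Nu \,\langle \nabla_k\rangle^r P_N(v\,\partial_x w)
= -\tfrac12\int \partial_x v\, |\langle \nabla_k\rangle^r P_N w|^2 + \int \langle \nabla_k\rangle^r P_N w\, [\langle \nabla_k\rangle^r P_N, v]\partial_x w .
\end{equation*}
The first term carries the derivative on the low frequency $N_2$. It is treated as above, giving $T N_2 M_{\min}^{b/2}$ with $M_{\min}$ attached to $v$. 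This is summable for $r>\frac b2+1$ by spending $\langle k\rangle^{1+b/2+}$ of the low-frequency factor, using $|k_\omega|\lesssim|k|$.

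The commutator is the main obstacle. The commutator with $P_N$ gains $N_2/N$ from the symbol $\chi_N$, which exactly compensates $\partial_x$. The commutator with $\langle k\rangle^r$ gains $|k_2|\langle k_1\rangle^{r-1}$ in height but no tangential gain. We try to handle it by noting that $|k_2|\lesssim M_2$ and that $\partial_x$ on $w$ costs $N$. We pair this cost with the factor $N^{-3/2}\cdot(\text{number of intervals})$ from Proposition \ref{prop:TrilinearEstimate}, and if necessary we split $|k_2|^{1}M_2^{b/2}$ onto the $F^r$ norm of $v$. The resulting frequency factor $N$ against the time factor $T N^{3/2}\cdot N^{-3/2}$ is bounded by using the room between $|\mathcal I|=N^{-3/2}$ and $N^{-2}$. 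This step is where the choice of $\langle k\rangle$-weights is needed, as mentioned in the Remark.

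Finally, in every case we extract $T^{1/2}$ by comparing the time-interval lengths. Whenever $T\ge N^{-3/2}$, the count of intervals times the trilinear bound leaves a surplus $N^{-1/2}\cdot$ (bounded power of $N$). We convert part of this surplus into $T^{1/2}$ using $N^{-3/4}\le T^{1/2}$, and we sum dyadically using the strict inequalities on $r$. Cauchy--Schwarz over the $M_i$ and the $N$-sums then closes \eqref{eq:EnergyEstimateSolution}.
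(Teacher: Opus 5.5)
There is a genuine gap in how you compensate the derivative loss in the terms where $\partial_x$ cannot be moved onto a low tangential frequency. For the height commutator you say the factor $N$ ``against the time factor $TN^{3/2}\cdot N^{-3/2}$ is bounded by using the room between $|\mathcal I|=N^{-3/2}$ and $N^{-2}$''. At the end you invoke a surplus $N^{-1/2}$ to be converted into $T^{1/2}$. Neither exists in an energy estimate. The bound of Proposition \ref{prop:TrilinearEstimate} is proportional to $|\mathcal I|$, so summing it over any partition of $[0,t]$ into admissible intervals gives exactly $T M_{\min}^{b/2}$, whatever the interval length. No negative power of $N$ is left over. The gain $N\cdot N^{-3/2}=N^{-1/2}$ you have in mind belongs to the short-time nonlinear estimate, where the $DU^2$-norm is taken on a single interval. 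Here time localization does nothing against $\partial_x$. So the factor $N$ you are left with in the commutator term is not controlled as written. The same problem affects your High--High--High case: ``symmetrizing'' cannot move the derivative to a lower tangential frequency, since there is none.

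The missing idea is the anisotropic inequality $|k\cdot\omega|\le|\omega||k|$: every factor at tangential frequency $\sim N$ has height $M\gtrsim N$. The paper performs a paraproduct decomposition in both $N$ and $M$ and uses this inequality in three places.
\begin{enumerate}
\item For the height commutator, $N\cdot M'/M\lesssim M'$, which leaves $(M')^{\frac b2+1}$; this is summable for $r>\frac b2+1$.
\item For interactions where the low-tangential factor does \emph{not} have small height (the second term in \eqref{eq:ParaproductDecomposition}), one has $M_{\min}\gtrsim N$. The paper then bounds $N M_{\min}^{b/2}\lesssim M_{\min}^{\frac b2+1}$ directly, without integrating by parts.
\item The same bound handles the case where all tangential frequencies are comparable.
\end{enumerate}
Your commutator bound $|k_2|\langle k_1\rangle^{r-1}$ presupposes $|k_2|\lesssim|k_1|$. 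This is not implied by $N_2\ll N$, since a small tangential frequency can carry a large height. That is precisely why the paper separates those interactions. The repair is to cancel $N$ with the $\langle k_1\rangle^{-1}$ already present in your commutator symbol, via $N\lesssim\langle k_1\rangle$, and to use $N\lesssim M_{\min}$ elsewhere. The argument then closes, with the $T$-dependence coming from the length of $[0,T]$ (and $T\le 1$) rather than from a surplus power of $N$.
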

\begin{proof}
We write
\begin{equation*}
\| P_N u(t) \|^2_{H^r_{\Lambda}} = \sum_{M \geq N} M^{2 r} \| P_N R_M u \|^2_{H^0_\Lambda},
\end{equation*}
then we invoke the fundamental theorem of calculus
\begin{equation*}
\| P_N R_M u \|^2_{\mathcal{L}^2} = \| P_N R_M u(0) \|^2_{\mathcal{L}^2} + 2 \int_0^t \int_{\R} P_N R_M u P_N R_M \partial_x (u \cdot u) d\overline{x} ds.
\end{equation*}
In the following we suppose that $N \geq 4$ as the estimate of the remainder terms is straight-forward. We have the following paraproduct decomposition:
\begin{equation*}
P_N \big( \sum_{N_1,N_2} P_{N_1} u P_{N_2} u \big) = 2 P_N \big( \sum_{\substack{N_1 \in [N/4,4N], \\ N_2 \leq N/8}} P_{N_1} u P_{N_2} u \big) + P_N \big( \sum_{N_1,N_2 \gtrsim N} P_{N_1} u P_{N_2} u \big).
\end{equation*}
We carry out another paraproduct decomposition and denote resulting terms by
\begin{equation}
\label{eq:ParaproductDecomposition}
P_N R_M \partial_x (u^2) = P_N R_M \partial_x \big[ 2(P_{\ll N} R_{\ll M} u \cdot u) + 2 (P_{\ll N} u R_{\ll M} u) + (P_{\gtrsim N} u P_{\gtrsim N} u) \big].
\end{equation}
For an estimate of the first term in \eqref{eq:ParaproductDecomposition}
we can decompose
\begin{equation}
\label{eq:ParaproductDecompositionA}
\begin{split}
&\quad \iint P_N R_M u [P_N R_M \partial_x (P_{\ll N} R_{\ll M} u \cdot u)] \\
&= \iint P_N R_M u \partial_x (P_{\ll N} R_{\ll M} u \cdot P_N R_M u) d\bar{x} dt \\
&\; + \iint P_N R_M u \partial_x (P_N R_M [(P_{\ll N} R_{\ll M} u) \cdot u) - P_{\ll N} R_{\ll M} u \cdot P_N R_M u) ] d \bar{x} dt.
\end{split} 
\end{equation}
In the first term in \eqref{eq:ParaproductDecompositionA} we can integrate by parts such that the derivative can be assigned to $P_{\ll N} R_{\ll M} u$. For the second term, we note that by anisotropy the commutator estimate will involve also the total regularity $M$.
We turn to an estimate of the first term in \eqref{eq:ParaproductDecomposition} by \eqref{eq:ParaproductDecompositionA}. Write with $N' \leq N/4$, $M' \leq M/4$
\begin{equation*}
\begin{split}
P_N R_M ( P_{N'} R_{M'} u \cdot u) &= P_{N'} R_{M'} u P_N R_M u \\
&\quad + [ P_N R_M ( P_{N'} R_{M'} u \cdot u) - P_{N'} R_{M'} u P_N R_M u ].
\end{split}
\end{equation*}
The contribution of the first expression is easily handled by integration by parts:
\begin{equation*}
\begin{split}
&\quad \iint P_N R_M u \partial_x (P_{N'} R_{M'} u P_N R_M u) d\bar{x} ds \\
&= - \frac{1}{2} \iint P_N R_M u \partial_x (P_{N'} R_{M'} u) P_N R_M u d\bar{x} ds.
\end{split}
\end{equation*}
We localize time to intervals of length $\min(T, N^{-\frac{3}{2}})$ and first treat the main contribution $T \geq N^{-\frac{3}{2}}$. Clearly, we can decompose $[0,T]$ into $\sim T N^{\frac{3}{2}}$ intervals $\mathcal{I}_T$ of length $N^{-\frac{3}{2}}$. We can estimate this contribution by Proposition \ref{prop:TrilinearEstimate} which recovers the factor of $N \geq N'$:
\begin{equation}
\label{eq:AuxEnergy}
\begin{split}
&\quad \sum_{N' \leq M' \leq M} \big| \iint_{\mathcal{I}_T \times \R} P_N R_M u (\partial_x P_{N'} R_{M'} u ) P_N R_M u \big| \\
&\lesssim N^{-\frac{3}{2}} \sum_{N' \lesssim M' \lesssim M} N' (M')^{\frac{b}{2}} \| P_{N'} R_{M'} u \|_{V^2_{Ai}(\mathcal{I}_T)} \| P_N R_M u \|^2_{V^2_{Ai}(\mathcal{I}_T)}.
\end{split}
\end{equation}
We carry out the sum in $M'$ to find for $r> \frac{b}{2}+1$:
\begin{equation*}
\begin{split}
\sum_{N' \lesssim M' \lesssim M} N' (M')^{\frac{b}{2}} \| P_{N'} R_{M'} u \|_{V^2_{Ai}(\mathcal{I}_T,H^0_{\Lambda})}  &\lesssim \sum_{N' \lesssim N} (N')^{-\varepsilon} \| P_{N'} u \|_{V^2_{Ai}(\mathcal{I}_T,H^r_{\Lambda})} \\
&\lesssim \| u \|_{F^r(T)}.
\end{split}
\end{equation*}
Carrying out the sum over $M \geq N$, then over $N$ in \eqref{eq:AuxEnergy} and taking into account the decomposition into $T N^{\frac{3}{2}}$ estimates this contribution in terms of $T \| u \|^2_{F^r(T)} \| u \|_{F^{r'}(T)}$. For $T \in [N^{-2},N^{-\frac{3}{2}}]$ we can still argue like above, but do not require anymore the partition into intervals of length $N^{-\frac{3}{2}}$. This contributes as $T^{\frac{1}{2}} \| u \|_{F^r(T)}^2 \| u \|_{F^{r'}(T)}$. In case $T \leq N^{-2}$, following along the above lines and applying Proposition \ref{prop:AiryBoundaryTerms} instead of Proposition \ref{prop:TrilinearEstimate}, yields again the estimate $T^{\frac{1}{2}} \| u \|_{F^r(T)}^2 \| u \|_{F^{r'}(T)}$.

\smallskip

For the second expression in \eqref{eq:ParaproductDecompositionA} we obtain the multiplier in Fourier space
\begin{equation*}
\begin{split}
&\quad \chi_{N'}(k_{1} \cdot \omega) \chi_{M'}(k_1) \big( \chi_N((k_1 + k_2) \cdot \omega ) \chi_M(k_1+k_2) - \chi_N( k_2 \cdot \omega) \chi_M(k_2) \big) \\
&= \chi_{N'}(k_1 \cdot \omega) \chi_{M'}(k_1) (\chi_N(( k_1 + k_2 ) \cdot \omega) - \chi_N(k_2 \cdot \omega)) \chi_M(k_1 + k_2) \\
&\quad + \chi_{N'}(k_1 \cdot \omega) \chi_{M'}(k_1) \chi_N(k_2 \cdot \omega) ( \chi_M(k_1+k_2) - \chi_M(k_2) ) .
\end{split}
\end{equation*}
Accordingly, we split the contribution as
\begin{equation*}
\partial_x [ P_N R_M ( P_{N'} R_{M'} u \cdot u) - P_{N'} R_{M'} u P_N R_M u ] = CI + CII.
\end{equation*}

To estimate the expressions, we want to use that the multipliers have a favorable modulus by the mean-value theorem. We have
\begin{equation}
\label{eq:CommI}
\big| \chi_{N'}(k_1 \cdot \omega) \chi_{M'}(k_1) \chi_N( ( k_1 + k_2 ) \cdot \omega) - \chi_N(k_2 \cdot \omega) \chi_M(k_1+k_2) \big| \lesssim \frac{N'}{N}.
\end{equation}
For the second term we find
\begin{equation}
\label{eq:CommII}
\big| \chi_{N'}(k_1 \cdot \omega) \chi_{M'}(k_1) \chi_N(k_2 \cdot \omega) \big[ \chi_M(k_1+k_2) - \chi_M(k_2) \big] \big| \lesssim \frac{M'}{M}.
\end{equation}
To conclude the estimate, we carry out a Fourier series expansion of the multipliers, cf. \cite[Remark~5.9]{KimSchippa2021}. The frequency-localization is maintained and the function spaces are translation-invariant, for which reason the Fourier series expansion is permissible.

With the size estimate at hand, we turn to short-time estimates. For the first term we can argue precisely like above. We localize time to intervals of length $\min(T, N^{-\frac{3}{2})}$ and apply Proposition \ref{prop:TrilinearEstimate} for $T \geq N^{-2}$, resp. \ref{prop:AiryBoundaryTerms} for $T \leq N^{-2}$.

The estimate of \eqref{eq:CommII} reads different. Localizing time to intervals $\mathcal{I}_T$ to \\ $\min(T, N^{-\frac{3}{2}})$, taking into account the derivative loss, commutator estimate \eqref{eq:CommII} and Proposition \ref{prop:TrilinearEstimate}, we find for $T \geq N^{-\frac{3}{2}}$ such that $|\mathcal{I}_T| \sim N^{-\frac{3}{2}}$:
\begin{equation*}
\begin{split}
&\quad \big| \iint_{\mathcal{I}_T \times \R} P_N R_M u (CII) d\bar{x} dt \big| \\
 &\lesssim N \frac{M'}{M} N^{-\frac{3}{2}} (M')^{\frac{b}{2}} \| P_N R_M u \|_{V^2_{Ai}(\mathcal{I}_T)} \| P_{N'} R_{M'} u \|_{V^2_{Ai}(\mathcal{I}_T)} \| \tilde{P}_N \tilde{R}_M u \|_{V^2_{Ai}(\mathcal{I}_T)} \\
&\lesssim \frac{1}{N^{\frac{1}{2}} M} (M')^{\frac{b}{2}+1} \| P_N R_M u \|_{V^2_{Ai}(\mathcal{I}_T)} \| P_{N'} R_{M'} u \|_{V^2_{Ai}(\mathcal{I}_T)} \| \tilde{P}_N \tilde{R}_M u \|_{V^2_{Ai}(\mathcal{I}_T)}.
\end{split}
\end{equation*}
Above we let $\tilde{A}_K = A_{K/4} + A_{K/2} + A_K + A_{2K} + A_{4K}$.
This estimate is inferior to the previous one, but can be summed up to the same total regularity $r > \frac{b}{2} + 1$, and taking into account the small time intervals. For $T \leq N^{-\frac{3}{2}}$ the necessary modifications are like above. The estimate of the first term in \eqref{eq:ParaproductDecomposition} is complete.

\smallskip

We turn to the estimate of the second term in \eqref{eq:ParaproductDecomposition}, which is given by 
\begin{equation*}
\sum_{\substack{M' \leq M, \\ N' \leq N}} \iint P_N R_M u \partial_x (P_{N'} \tilde{R}_M u \cdot \tilde{P}_N R_{M'} u) d\bar{x} dt, \quad M' \leq M, \; N' \leq N.
\end{equation*}
Integration by parts to the assign the derivative to the lowest tangential frequency is not possible. Observe that $M_{\min} \gtrsim N$, for which reason high tangential frequencies are still dominated by small total frequencies. By localizing time to intervals $\mathcal{I}_T$ of length $\min(T,N^{-\frac{3}{2}})$ and applying Proposition \ref{prop:TrilinearEstimate} or Proposition \ref{prop:AiryBoundaryTerms} depending on $|\mathcal{I}_T|$ like above, estimates this contribution.

In the third term in \eqref{eq:ParaproductDecomposition} the derivative is on the factor with low tangential frequencies. This can be estimated like above and completes the proof.
\end{proof}


\begin{proposition}
\label{prop:EnergyEstimateDifferences}
Let $v \in C([0,T],H^{10\nu}_\Lambda) \cap C^1((0,T),H^{10 \nu-3}_{\Lambda}) \cap F^0(T)$ be a solution to
\begin{equation}
\label{eq:DifferenceEquation}
\partial_t v +  \partial_x^3 v = \partial_x (v u).
\end{equation}
Then it holds for $r> \frac{b}{2}+1$:
\begin{equation}
\label{eq:EnergyEstimateDifferences}
\| v \|^2_{E^0(T)} \lesssim \| v(0) \|^2_{H^0_{\Lambda}} + T^{\frac{1}{2}} \| v \|^2_{F^0(T)} \| u \|_{F^r(T)}.
\end{equation}
\end{proposition}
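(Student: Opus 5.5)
We follow the scheme of Proposition \ref{prop:EnergyEstimates}, now at the $\mathcal{L}^2$ level. Since only $E^0$ is involved, the height decomposition of $v$ is not needed. For $N\ge 1$ we apply the fundamental theorem of calculus to the frequency-localized energy:
\begin{equation*}
\| P_N v(t) \|^2_{H^0_\Lambda} = \| P_N v(0) \|^2_{H^0_\Lambda} + 2 \int_0^t \int_{\R} P_N v \, P_N \partial_x (v u) \, d\bar{x} \, ds .
\end{equation*}
Taking the supremum in $t$ and summing over $N$, it suffices to bound $\sum_N \sup_t |\int_0^t\int P_N v P_N\partial_x(vu)|$ by $T^{\frac12}\|v\|_{F^0(T)}^2\|u\|_{F^r(T)}$. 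We decompose $vu$ into paraproducts according to the tangential frequencies $N_1$ of $v$ and $N_2$ of $u$:
\begin{itemize}
\item (a) $N_2 \ll N$, $N_1 \sim N$ (low frequencies of $u$);
\item (b) $N_1 \ll N$, $N_2 \sim N$ (high frequencies of $u$);
\item (c) $N_1, N_2 \gtrsim N$ (high--high to low).
\end{itemize}
In every case we split $u$ into height pieces $R_{M_2}$. Time is localized to intervals $\mathcal{I}_T$ of length $\min(T,N_{\max}^{-3/2})$. We then apply Proposition \ref{prop:TrilinearEstimate} when $|\mathcal{I}_T|\ge N_{\max}^{-2}$ and Proposition \ref{prop:AiryBoundaryTerms} otherwise, always placing the factor $M_{\min}^{b/2}$ on the height of $u$ (or on a smaller height, which only helps).

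\textbf{Case (a).} We write
\begin{equation*}
P_N(P_{N_1}v\,P_{N_2}u)=P_{N_2}u\,P_Nv+[P_N,P_{N_2}u]P_{N_1}v .
\end{equation*}
For the first term, integration by parts moves the derivative onto $P_{N_2}u$: $\int P_Nv\,\partial_x(P_{N_2}u\,P_Nv)=\frac12\int (\partial_xP_{N_2}u)(P_Nv)^2$. The commutator has a multiplier of size $N_2/N$ by the mean value theorem, which again trades the derivative $N$ for $N_2$; the Fourier series expansion of the multiplier is legitimate as before. Here, unlike in Proposition \ref{prop:EnergyEstimates}, no height projection acts on $v$, so no $M'/M$ commutator appears. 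This removes the anisotropy issue. The trilinear bound on each interval of length $N^{-3/2}$ gives
\begin{equation*}
N^{-3/2}\,N_2 M_2^{b/2}\,\|P_{N_2}R_{M_2}u\|_{V^2_{Ai}}\,\|\tilde P_N v\|_{V^2_{Ai}}^2 .
\end{equation*}
Summing $N_2\le M_2$ uses $r>\frac b2+1$ to obtain $\lesssim\|u\|_{F^r}$. Counting $TN^{3/2}$ intervals yields $T\|u\|_{F^r}\|P_Nv\|_{F_N}^2$, which sums in $N$. For $T<N^{-3/2}$ we obtain $T^{1/2}$ as in Proposition \ref{prop:EnergyEstimates}.

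\textbf{Cases (b) and (c).} These are the main obstacle, since the derivative cannot be moved onto the low-frequency factor. In case (b) the derivative of size $N$ falls on the product. We compensate it with the regularity of $u$ at tangential frequency $N$: its height satisfies $M_2\gtrsim N$, so $\|P_NR_{M_2}u\|_{V^2}\lesssim M_2^{-r}\|u\|_{F^r}$. The trilinear estimate on $N^{-3/2}$-intervals, after $TN^{3/2}$ repetitions, contributes
\begin{equation*}
T\,N\,M_2^{b/2-r}\,\|P_{N_1}v\|\,\|P_Nv\|\,\|P_NR_{M_2}u\| \lesssim T\,N^{-\varepsilon}\,\|P_{N_1}v\|\,\|P_Nv\|\,\|u\|_{F^r},
\end{equation*}
using $r>\frac b2+1$. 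Here $v$ at frequency $N_1$ is measured on intervals of length $N^{-3/2}$, shorter than its own scale, which is harmless because $F$-norms restrict to subintervals. Summing over $N_1\le N$ and then $N$ with Cauchy--Schwarz gives $T\|v\|_{F^0}^2\|u\|_{F^r}$.

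In case (c), $N_1\sim N_2=K\gtrsim N$. The output $P_Nv$ has frequency $N\le K$, so we must cover the $N^{-3/2}$-scale time intervals (resp.\ $T$) by $(K/N)^{3/2}$ intervals of length $K^{-3/2}$, exactly as in the proof of Proposition \ref{prop:NonlinearEstimate}. This gives the factor $N\cdot T\cdot M_2^{b/2}K^{-r}$ together with $\|P_Nv\|\,\|P_Kv\|$. Since $N\le K$ and $r>\frac b2+1$ provides $K^{1-r+b/2}\lesssim K^{-\varepsilon}$, the sums over $K\ge N$ and $N$ converge.

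The boundary regimes $T\le N_{\max}^{-2}$ are handled by Proposition \ref{prop:AiryBoundaryTerms}. The extra $N_{\min}^{1/2}$ there is absorbed by $T^{1/2}\le N_{\max}^{-1}$. This yields \eqref{eq:EnergyEstimateDifferences}.
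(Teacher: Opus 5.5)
Your argument is correct and follows the paper's proof: the fundamental theorem of calculus, a paraproduct decomposition, integration by parts plus a commutator bound when $u$ carries the low tangential frequency, the regularity $r>\frac b2+1$ of $u$ to pay for the derivative in the remaining cases, and Propositions \ref{prop:TrilinearEstimate}/\ref{prop:AiryBoundaryTerms} on intervals of length $\min(T,N_{\max}^{-3/2})$. The one difference is that you measure $\|P_N v\|_{H^0_\Lambda}$ without the height projection $R_M$, which is legitimate because the $E^0$-norm carries no height weight; this simplifies the argument slightly, removing the $M'/M$ commutator and folding the paper's mixed height/tangential terms into your cases (a) and (b).
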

\begin{proof}
The main difference with the previous energy estimate is the decreased symmetry of the expression, which is accounted for by an estimate at $\mathcal{L}^2$-regularity.

We start out like in the previous proposition invoking the fundamental theorem of calculus with $M \geq N \geq 4$:
\begin{equation*}
\| P_N R_M v(t) \|^2_{\mathcal{L}^2_x} = \| P_N R_M v(0) \|^2_{\mathcal{L}^2_x} + 2 \int_0^t \int_{\R} P_N R_M v \partial_x P_N R_M (v u) d\bar{x} ds.
\end{equation*}
We use a paraproduct decomposition
\begin{equation*}
\begin{split}
P_N R_M (u v) &= P_N R_M (P_{\ll N} R_{\ll M} u \cdot v) + P_N R_M (u \cdot P_{\ll N} R_{\ll M} v) \\
&\quad + P_N R_M (P_{\ll N} u R_{\ll M} v + R_{\ll M} u P_{\ll N} v) + P_N R_M (P_{\gg N} u P_{\gg N} v)).
\end{split}
\end{equation*}
The overall strategy is like above, so we shall be brief. The time interval $[0,T]$ is decomposed into intervals $\mathcal{I}_T$ of length $\min(T,N_{\max}^{-\frac{3}{2}})$.

The first expression is estimated like in the previous proposition by integration by parts and commutator estimates. The only difference is a shift in regularity, which is readily accounted for.

The second expression is not amenable to integration by parts:
\begin{equation*}
\sum_{\substack{N' \leq N, \\ M' \leq M}} \iint_{\mathcal{I}_T \times \R} P_N R_M v \partial_x (\tilde{P}_N \tilde{R}_M u P_{N'} R_{M'} v). 
\end{equation*}
For $|\mathcal{I}_T| \in [N^{-2}, N^{-\frac{3}{2}}]$ an application of Proposition \ref{prop:TrilinearEstimate} yields for $N' \leq N$, $M' \leq M$
\begin{equation*}
\begin{split}
&\quad \big| \iint_{\mathcal{I}_T \times \R} P_N R_M v \partial_x (\tilde{P}_N \tilde{R}_M u P_{N'} R_{M'} v ) d \bar{x} dt \big| \\
&\lesssim N |\mathcal{I}_T| M_{\min}^{\frac{b}{2}} \| P_N R_M v \|_{V^2_{Ai}(\mathcal{I}_T)} \| \tilde{P}_N \tilde{R}_M u \|_{V^2_{Ai}(\mathcal{I}_T)} \| P_{N'} R_{M'} v \|_{V^2_{Ai}(\mathcal{I}_T)}.
\end{split}
\end{equation*}
Since $u$ will be estimated at high regularity, this is acceptable. For $|\mathcal{I}_T| \leq N^{-2}$ we apply Proposition \ref{prop:AiryBoundaryTerms} instead. In both cases summation is straight-forward.

Similarly, the third and fourth term are estimated. For the third term note that
\begin{equation*}
\begin{split}
 &\iint_{\mathcal{I}_T \times \R} P_N R_M v \partial_x P_N R_M (P_{\ll N} \tilde{R}_M u \tilde{P}_N R_{\ll M} v) d\bar{x} dt, \\
 &\quad \iint_{\mathcal{I}_T \times \R} P_N R_M v \partial_x P_N R_M (\tilde{P}_N R_{\ll M} u P_{\ll N} v \tilde{R}_M v) d\bar{x} dt.
 \end{split}
\end{equation*}
the minimal total frequency dominate the high transverse frequency, for which reason an estimate without integration by parts and commutator arguments is possible.

In the fourth term note that the derivative acts already on the low frequency. The straight-forward details on the summations are omitted.

%
%

\end{proof}

\section{Low regularity well-posedness for the Benjamin-Ono equation}
\label{section:BenjaminOno}

In this section we show Theorem \ref{thm:QPBenjaminOno}, which is concerned with low-regularity well-posedness of the Benjamin-Ono equation with small quasi-periodic initial data:
\begin{equation}
\label{eq:BenjaminOnoProof}
\left\{ \begin{array}{cl}
\partial_t u + \mathcal{H} \partial_x^2 u &= u \partial_x u, \quad (t,x) \in \R \times \R, \\
u(0) &= u_0 \in H^s_{\Lambda}.
\end{array} \right.
\end{equation}
 Starting point is again the data-to-solution mapping $S_T^{\infty} : H^{10 \nu}_{\Lambda} \to C([0,T],H^{10 \nu}_{\Lambda})$ provided by the energy arguments in \cite{AitzhanAmbrose2024}. We sketch the proof of the following:
\begin{theorem}
For $s > \frac{\nu+1}{2}$ there is $c > 0$ such that the data-to-solution mapping $S_T^\infty$ extends uniquely to a continuous map $S_1^s: B_{H^s_\Lambda}(0,c) \to C([0,1],H^s_\Lambda)$.
\end{theorem}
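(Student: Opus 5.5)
We follow the scheme of Section \ref{section:QuasilinearLWP}, replacing the Airy group by the Benjamin--Ono group $e^{-t\mathcal{H}\partial_x^2}$. The adapted spaces $U^2_{BO}$, $V^2_{BO}$ are defined accordingly. The short-time spaces $F^r(T)$, $\mathcal{N}^r(T)$, $E^r(T)$ are built with frequency-dependent time localization $|I| = \min(T,N^{-1})$; this replaces $N^{-3/2}$, since for BO the natural window is $N^{-1}$ up to an arbitrarily small power, and we take $T(N)=N^{-1+\varepsilon'}$ if leeway is needed. The linear short-time energy estimate, Lemma \ref{lem:LinearEnergyEstimate}, holds verbatim by the Duhamel principle. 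It remains to prove, for $r'\geq r>\frac{b}{2}+1=\frac{\nu+1}{2}$, three estimates: the nonlinear estimate $\|\partial_x(u_1u_2)\|_{\mathcal{N}^{r'}(1)}\lesssim \|u_1\|_{F^r(1)}\|u_2\|_{F^{r'}(1)}$ (with its $\mathcal{L}^2$ difference version), the energy estimate $\|u\|_{E^{r'}(1)}^2\lesssim \|u_0\|^2_{H^{r'}_\Lambda}+\|u\|_{F^{r'}(1)}^2\|u\|_{F^r(1)}$, and the analogous energy estimate for differences at $\mathcal{L}^2$-regularity. Since we work on unit time with small data, no power of $T$ is needed. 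Smallness of $\|u_0\|_{H^s_\Lambda}\leq c$ closes the bootstrap $X^2\lesssim c^2+X^3+X^4$ with $X=\|u\|_{F^s(1)}$ by continuity in $T$. Combined with the Lipschitz bound for differences in $F^0$, frequency envelopes then extend $S^\infty_T$ to a continuous map $S^s_1$ on $B_{H^s_\Lambda}(0,c)$.

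The key step is the BO analogue of Proposition \ref{prop:ShorttimeBilinearEstimateAiry}. Rescale $x\to Nx$, $t\to N^2t$. Fourier supports then lie in $\delta$-neighbourhoods of $\pm\mathbb{P}^1$ with $\delta=(TN^2)^{-1}$, since the dispersion $-\xi|\xi|$ gives the positive parabola for $\xi<0$ and the negative one for $\xi>0$. Under $\mathrm{dist}(\tilde I_1,\tilde I_2)\geq N/16$, Theorem \ref{thm:SFBO} replaces the cubic bilinear square function estimate. The rest follows the Airy proof unchanged: continuous approximation, Hölder, the Bernstein bound $\#\{k:k_\omega\in\theta,\ k\in R_M\}\lesssim M^b$ on unit intervals, and almost orthogonality in height. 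This yields, for $N^{-2}\lesssim T\lesssim N^{-1}$,
\[
\|P_{I_1}R_{M_1}e^{-t\mathcal{H}\partial_x^2}u_1\,P_{I_2}R_{M_2}e^{-t\mathcal{H}\partial_x^2}u_2\|_{L^2_t([0,T],\mathcal{L}^2_x)}\lesssim T^{\frac12}M_{\min}^{\frac b2}\|u_1\|_{H^0_\Lambda}\|u_2\|_{H^0_\Lambda}.
\]
From this, the trilinear estimate of Proposition \ref{prop:TrilinearEstimate} (bound $|\mathcal{I}_T|M_{\min}^{b/2}$ in $V^2_{BO}$) and the boundary estimate of Proposition \ref{prop:AiryBoundaryTerms} follow by the same case split: $N_{\min}\leq N_{\max}/8$ versus comparable frequencies, where one decomposes into intervals of length $N_{\max}/32$ and uses the convolution constraint to obtain separation of $\pm I_j$.

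The nonlinear estimate follows as in Proposition \ref{prop:NonlinearEstimate}. On intervals of length $N_{\max}^{-1}$, the derivative $N_3$ is compensated by $|\mathcal{I}_T|$. In the high$\times$high$\to$low case, subdividing $\mathcal{I}_{T_3}$ into $N_1/N_3$ pieces gives $N_3N_1^{-1}(N_1/N_3)=1$, which is acceptable on unit time; the factor $M_{\min}^{b/2}$ is summed using $r>\frac b2$. The energy estimates follow Propositions \ref{prop:EnergyEstimates} and \ref{prop:EnergyEstimateDifferences}. One uses the paraproduct decomposition \eqref{eq:ParaproductDecomposition} and integration by parts in the high$\times$low term, moving the derivative onto $P_{N'}R_{M'}u$. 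The commutators \eqref{eq:CommI}, \eqref{eq:CommII} contribute $N'/N$ and $M'/M$ respectively, and each time localization to windows $N^{-1}$ costs a factor $N$ over $[0,1]$.

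\textbf{Main obstacle.} I expect the summation in the energy estimate to be the main obstacle. For BO the window $N^{-1}$ is exactly critical: on $[0,1]$ one gains $N^{-1}\cdot N$ from the intervals against $N'(M')^{b/2}$ from the derivative and the height factor, and $N'\leq M'$ must be absorbed by $r>\frac b2+1$ with only an $\varepsilon$ of room. The term $CII$ gives $N\cdot\frac{M'}{M}\cdot N^{-1}\cdot N\,(M')^{b/2}$, which is not directly summable. I would first try to exploit $M\geq N$, since the height dominates the tangential frequency, writing $N/M\leq 1$. If that fails, I would shrink the window to $N^{-1-\varepsilon}$ and use the leeway in Theorem \ref{thm:SFBO}, which allows $T\geq N^{-2}$, to regain $N^{-\varepsilon}$. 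The remaining danger is the high$\times$high$\to$low energy term, where integration by parts is unavailable. There I would rely on $M_{\min}\gtrsim N$ as in the KdV proof.
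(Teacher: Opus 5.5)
Your overall scheme matches the paper's. You use windows of length $N^{-1}$, Theorem \ref{thm:SFBO} after the rescaling $x\to Nx$, $t\to N^2t$, trilinear and boundary estimates, paraproduct and commutator energy estimates, a small-data bootstrap on $[0,1]$ with no powers of $T$, and frequency envelopes. Your first remedy in the energy estimate is the one that works. Over $[0,1]$ the integrated-by-parts term contributes $N'(M')^{b/2}$ and $CII$ contributes $\frac{N}{M}(M')^{\frac b2+1}\lesssim (M')^{\frac b2+1}$; both sum for $r>\frac b2+1$. The high$\times$high term is handled by $M_{\min}\gtrsim N$.

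The bilinear estimate you call the key step is misstated, however. For BO the correct bound is $N^{-1/2}M_{\min}^{b/2}$ (Proposition \ref{prop:ShorttimeBilinearBO}), not $T^{1/2}M_{\min}^{b/2}$. After undoing the scaling, the caps of $\mathbb{I}_\delta$ with $\delta=(TN^2)^{-1}$ have length $(TN)^{-1}\geq 1$. They are not unit intervals as in the Airy case, where the length is $T^{-1}N^{-2}\leq 1$. Hence Bernstein costs $((TN)^{-1}M^b)^{1/4}$ per factor. Your bound is in fact false for $T\ll N^{-1}$. Take $\nu=1$ and two normalized Dirichlet kernels with frequencies in $[N,\frac32N]$ and $[2N,\frac52N]$. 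They remain overlapping on a set of width $\sim N^{-1}$ for $|t|\lesssim N^{-2}$, so the left-hand side is $\gtrsim N^{-1/2}$ at $T=N^{-2}$, whereas you claim $N^{-1}$. Accordingly, the trilinear bound is $|\mathcal{I}_T|^{1/2}N_{\max}^{-1/2}M_{\min}^{b/2}$ (Proposition \ref{prop:TrilinearEstimateProp}), not $|\mathcal{I}_T|M_{\min}^{b/2}$.

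The two bounds coincide at $|\mathcal{I}_T|\sim N_{\max}^{-1}$. That is the only window your main line uses on $[0,1]$, and for the shorter windows in the continuity argument in $T$ the correct bound still suffices. So your main argument survives once the bound is corrected.

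Your fallbacks do not survive. Windows of length $N^{-1-\varepsilon}$ cost $N^{1+\varepsilon}\cdot N^{-1-\varepsilon/2}=N^{\varepsilon/2}$ in the energy estimate, rather than gaining $N^{-\varepsilon}$. Windows of length $N^{-1+\varepsilon'}$ lose $N^{\varepsilon'}$ in the nonlinear estimate. The window $N^{-1}$ is therefore forced. This is why the paper's BO estimates carry no power of $T$ and why smallness of the data is needed.
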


\subsection{Function spaces and multilinear estimates}
The short-time spaces are defined analogous to Section \ref{section:Prelim} replacing the Airy propagator with $e^{t \mathcal{H} \partial_x^2}$. Importantly, we choose the frequency-dependent time localization $T=T(N)=N^{-1}$ like in \cite{ShorttimeFourierTransformRestriction}, in which the periodic case was treated in detail. We denote the resulting short-time function spaces with $F^s_{BO}(T)$, $\mathcal{N}^s_{BO}(T)$.

The bilinear square function estimate proved in Theorem \ref{thm:SFBO} yields the following analog of Proposition \ref{prop:ShorttimeBilinearEstimateAiry}:
\begin{proposition}
\label{prop:ShorttimeBilinearBO}
Let $K \leq 2N$ and $I_j \in (-3N,3N)$, $j=1,2$ intervals of length $K$. Let $\tilde{I}_j = \{ x \in \R_{>0} : x \in I_j \vee -x \in I_j\}$ and assume that $\text{dist}(\tilde{I}_1,\tilde{I}_2) \geq N/16$. Assume $u_1,u_2 \in H^0_{\Lambda}$. Let $N^{-2} \leq T \leq N^{-1}$. The following holds:
\begin{equation*}
\| P_{I_1} R_{M_1} e^{t \mathcal{H} \partial_x^2} u_1 P_{I_2} R_{M_2} e^{t \mathcal{H} \partial_x^2} u_2 \|_{L^2_t([0,T],\mathcal{L}^2_x)} \lesssim N^{-\frac{1}{2}} M_{\min}^{\frac{\nu-1}{2}} \| u_1 \|_{H^0_{\Lambda}} \| u_2 \|_{H^0_{\Lambda}}.
\end{equation*}
\end{proposition}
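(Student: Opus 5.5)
The argument mirrors the proof of Proposition \ref{prop:ShorttimeBilinearEstimateAiry}, with the cubic curve $\Gamma_3$ replaced by the parabolas $\pm\mathbb{P}^1$ and Theorem \ref{thm:SFBO} used in place of the cubic bilinear square function estimate. Note that $e^{t\mathcal{H}\partial_x^2}$ has symbol $e^{-it\xi|\xi|}$. Its space-time Fourier support therefore lies on $\{(\xi,-\xi|\xi|)\}$, which is the graph of $-\mathbb{P}^1$ for $\xi>0$ and of $+\mathbb{P}^1$ (reflected) for $\xi<0$. Each interval $I_j$ can be split into at most two pieces according to the sign of the frequency, so we may assume every piece lies on a single parabola. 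This is exactly the setting of Theorem \ref{thm:SFBO}, which allows the signs $a_1,a_2$ to be chosen independently.

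\textbf{Reductions.} First, we perform the almost orthogonal decomposition in the height, as in \eqref{eq:AlmostOrthogonalAiry}. Assuming $M_2=M_{\min}$, we localize $u_1$ to cubes of side $M_2$; this reduces matters to the case $M_1=M_2=M$. Next, we rescale $x\to Nx$, $t\to N^2t$, $\xi\to\xi/N$, which is the parabolic scaling for this dispersion relation. The frequencies then lie in $(-3,3)$, the projected supports satisfy the separation $\mathrm{dist}(\pi_1\mathrm{supp},\pm\pi_1\mathrm{supp})\geq 1/16$, and the time interval becomes $[0,TN^2]$ with $TN^2\geq 1$. We then apply the continuous approximation. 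We multiply by $w_{TN^2}(t)w_{NL}(x)$ with $L\to\infty$; the resulting $F_i$ have Fourier support in $\mathcal{N}_\delta(\pm\mathbb{P}^1)$ with $\delta=(TN^2)^{-1}$. Theorem \ref{thm:SFBO} then bounds $\|F_1F_2\|_{L^2}$ by the product of square functions over $\delta$-intervals. Undoing the scaling, these become frequency intervals of length $(TN)^{-1}$.

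\textbf{Main estimate.} We proceed as before. Minkowski and Hölder give
\[
\prod_i\Big(\sum_{\theta_i}\|P_{\theta_i}\cdots\|^2_{L^4_t(w_T;\mathcal{L}^4_x)}\Big)^{1/2}.
\]
To bound each factor, note that for $\theta$ of length $(TN)^{-1}\leq 1$ the number of $k$ with $k_\omega\in\theta$ in a cube of side $M$ is $\lesssim M^{b}$. Bernstein's inequality then gives $\mathcal{L}^4_x\lesssim M^{b/4}\mathcal{L}^2$, and integrating trivially in time yields $T^{1/4}$. Summing the orthogonal pieces $\theta_i$ produces the bound $T^{1/2}M^{b/2}\|u_1\|\|u_2\|$.

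\textbf{Main obstacle.} Since $T\leq N^{-1}$, we have $T^{1/2}\leq N^{-1/2}$, so this already gives the stated bound $N^{-1/2}M_{\min}^{(\nu-1)/2}$. Thus the only real issue is justifying the application of Theorem \ref{thm:SFBO}, specifically its mixed-sign case $(a_1,a_2)=(1,-1)$. That case arises when $I_1$ and $I_2$ carry frequencies of opposite signs, and there the hypothesis on $\tilde{I}_j$, namely separation of both $I_1$ from $I_2$ and $I_1$ from $-I_2$, is exactly what is needed. I would verify this first, checking that the sign-splitting of each $I_j$ preserves the separation hypothesis, since the rest is identical to the Airy case. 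The limiting argument is the one in \cite{Schippa2025Quasiperiodic}.
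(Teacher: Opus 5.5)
Your route is the paper's: height orthogonality to reduce to $M_1=M_2$, the rescaling $x\to Nx$, $t\to N^2t$, continuous approximation, Theorem \ref{thm:SFBO} with $\delta=(TN^2)^{-1}$, then Minkowski, H\"older, Bernstein and orthogonality in $\theta$. The sign-splitting of $I_j$ is harmless, since sub-pieces inherit the separation of $\tilde I_1,\tilde I_2$. The mixed-sign case is already contained in Theorem \ref{thm:SFBO}, so that is not where the difficulty lies.

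\textbf{The error is in the counting step.} Since $T\leq N^{-1}$, the unscaled intervals $\theta$ have length $(TN)^{-1}\geq 1$, not $\leq 1$. For $\theta$ of length $\ell\geq 1$, fixing the $b$ transverse coordinates in a cube of side $M$ leaves $\lesssim \ell$ choices for the last one. So the count is $\lesssim (TN)^{-1}M^{b}$, and Bernstein gives
\[
\| P_{I_i}R_{M_i}e^{t\mathcal{H}\partial_x^2}P_{\theta_i}u_i\|_{L^4_t(w_T;\mathcal{L}^4_x)}\lesssim T^{\frac14}(T^{-1}N^{-1})^{\frac14}M^{\frac b4}\|P_{I_i}R_{M_i}P_{\theta_i}u_i\|_{H^0_\Lambda}.
\]
This is exactly the paper's estimate. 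The product of the two factors is $T^{\frac12}(TN)^{-\frac12}M^{\frac b2}=N^{-\frac12}M^{\frac b2}$, independent of $T$.

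Your intermediate bound $T^{\frac12}M^{\frac b2}$ is actually false for $T\ll N^{-1}$. Take $\nu=1$, $T=N^{-2}$, and $u_j$ with unit coefficients on frequency intervals of length $cN$ inside $I_j$. On $[0,N^{-2}]$ there is essentially no dispersion, so $|u_1u_2|\gtrsim N^2$ for $|x|\lesssim N^{-1}$. The left-hand side is then $\gtrsim N^{-\frac12}\|u_1\|_{H^0_\Lambda}\|u_2\|_{H^0_\Lambda}$, not $\lesssim N^{-1}\|u_1\|_{H^0_\Lambda}\|u_2\|_{H^0_\Lambda}$. Getting something strictly stronger than the stated, $T$-independent bound should have been a warning sign.

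There are two repairs. Either use the corrected count above, or first observe that the left-hand side is nondecreasing in $T$, so it suffices to take $T=N^{-1}$. In that case $\theta$ has unit length and your count $\lesssim M^b$ is correct.
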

\begin{proof}
By an almost orthogonal decomposition we can suppose that $M_1 = M_2$. We rescale $t \to N^2 t$, $x \to Nx$, $\xi \to \xi/N$ which restricts frequencies to $(-3,3)$ with unit separation:
\begin{equation*}
\begin{split}
&\quad \| P_{I_1} R_{M_1} e^{t \mathcal{H} \partial_x^2} u_1 P_{I_2} R_{M_2} e^{t \mathcal{H} \partial_x^2} u_2 \|_{L^2_t([0,T],\mathcal{L}^2_x)} \\
&= \| P_{I_1'} R_{M_1'} e^{t \mathcal{H} \partial_x^2} u_1' P_{I_2'} R_{M_2'} e^{t \mathcal{H} \partial_x^2} u_2' \|_{L^2_t([0,TN^2],\mathcal{L}^2_x)}.
\end{split}
\end{equation*}
After the approximation argument like in the proof of Proposition \ref{prop:ShorttimeBilinearEstimateAiry}, this is amenable to Theorem \ref{thm:SFBO} with $\delta= (TN^2)^{-1}$. We find
\begin{equation*}
\begin{split}
&\quad \| P_{I_1} R_{M_1} e^{t \mathcal{H} \partial_x^2} u_1 P_{I_2} R_{M_2} e^{t \mathcal{H} \partial_x^2} u_2 \|_{L^2_t([0,T],\mathcal{L}^2_x)} \\
&\lesssim \big\| \big( \sum_{\theta_1 \in \bar{\mathbb{I}}_1} |P_{I_1} R_{M_1} e^{t \mathcal{H} \partial_x^2} P_{\theta_1} u_1|^2 \big)^{\frac{1}{2}} \big( \sum_{\theta_2 \in \bar{\mathbb{I}}_1} |P_{I_2} R_{M_2} e^{t \mathcal{H} \partial_x^2} P_{\theta_2} u_2|^2 \big)^{\frac{1}{2}} \big\|_{L^2_t(w_T,\mathcal{L}^2_x)}
\end{split}
\end{equation*}
with $\bar{\mathbb{I}}_1 = \{ T^{-1} N^{-1} (k,k+1] : k \in \Z \}$ and $P_{\theta_i}$ denotes the corresponding frequency projection. We estimate by H\"older's and Bernstein's inequality:
\begin{equation*}
\begin{split}
&\quad \| P_{I_i} R_{M_i} e^{t \mathcal{H} \partial_x^2} P_{\theta_i} u_i \|_{L^4_t(w_T, \mathcal{L}^4_x))} \\
&\lesssim T^{\frac{1}{4}} (T^{-1} N^{-1} )^{\frac{1}{4}} M_{\min}^{\frac{\nu-1}{4}} \| P_{I_i} R_{M_i} P_{\theta_i} u_i \|_{H^0_{\Lambda}}.
\end{split}
\end{equation*}
The frequency-localized pieces can be summed up by almost orthogonality.
\end{proof}

By the above, we obtain the following variant of Proposition \ref{prop:TrilinearEstimate}:
\begin{proposition}
\label{prop:TrilinearEstimateProp}
Let $N_i,M_i$, $i=1,2,3$ and $u_i \in V^2_{BO} H^0_{\Lambda}$, and let $\mathcal{I}_T$ be an interval of length $|\mathcal{I}_T| = T \in [N_{\max}^{-2}/4,4N_{\max}^{-1}]$. Then the following holds:
\begin{equation*}
\big| \iint_{\mathcal{I}_T \times \R} \prod_{i=1}^3 P_{N_i} R_{M_i} u_i d \bar{x} dt \big| \lesssim T^{\frac{1}{2}} N_{\max}^{-\frac{1}{2}} M_{\min}^{\frac{b}{2}} \prod_{i=1}^3 \|P_{N_i} R_{M_i} u_i \|_{V^2_{BO} H^0_{\Lambda}}.
\end{equation*}
\end{proposition}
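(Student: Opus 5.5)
The plan is to mirror the proof of Proposition \ref{prop:TrilinearEstimate}, with Proposition \ref{prop:ShorttimeBilinearBO} replacing Proposition \ref{prop:ShorttimeBilinearEstimateAiry}. The trilinear integral will be bounded by H\"older's inequality as the product of an $L^2_t(\mathcal{I}_T;\mathcal{L}^2_x)$ norm of a bilinear expression with separated frequencies and an $L^2_t(\mathcal{I}_T;\mathcal{L}^2_x)$ norm of the remaining factor. The bilinear factor contributes $N_{\max}^{-1/2} M_{\min}^{b/2}$. The single factor contributes $T^{1/2}$, since the $V^2_{BO}$-norm controls $\sup_t \|\cdot\|_{\mathcal{L}^2_x}$. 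Multiplying gives $T^{1/2}N_{\max}^{-1/2}M_{\min}^{b/2}$, as claimed.

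First I will transfer Proposition \ref{prop:ShorttimeBilinearBO} from free solutions to $U^4_{BO}$ atoms on intervals of length $T\in[N_{\max}^{-2}/4,4N_{\max}^{-1}]$. By time translation invariance the restriction $N^{-2}\le T\le N^{-1}$ is harmless up to constants. The atomic decomposition then gives the bound for $U^4_{BO}H^0_\Lambda$ functions. Via $V^2_{BO}\hookrightarrow U^4_{BO}$ this yields
\[
\|P_{I_1}R_{M}u_1\,P_{I_2}R_{M}u_2\|_{L^2_t(\mathcal{I}_T;\mathcal{L}^2_x)}\lesssim N_{\max}^{-1/2}M^{b/2}\prod_{i=1}^2\|P_{I_i}R_Mu_i\|_{V^2_{BO}H^0_\Lambda},
\]
under the separation $\mathrm{dist}(\tilde I_1,\tilde I_2)\ge N_{\max}/16$. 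To pass to unequal heights I will use the almost orthogonal decomposition of the larger height into cubes of size $M_{\min}$, exactly as in \eqref{eq:AlmostOrthogonalAiry}. By the convolution constraint, a cube of the highest-height function interacts with only boundedly many cubes of the other two. Cauchy--Schwarz over the cubes then reduces everything to $M_1=M_2=M_3$ up to the factor $M_{\min}^{b/2}$. This uses that $V^2_{BO}$ respects the $\ell^2$ orthogonality.

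The case analysis follows the Airy proof. If $N_{\min}\le N_{\max}/8$, the two largest frequencies are comparable, and I will pair the lowest-frequency factor with one high-frequency factor. Their frequency supports satisfy $\tilde{I}$-separation $\gtrsim N_{\max}$, so I apply the bilinear bound with $N=N_{\max}$ and use the trivial $L^2_tL^2_x$ bound on the third factor. If $N_{\min}\ge N_{\max}/8$, I will partition all supports into intervals of length $N_{\max}/32$; only boundedly many triples are nontrivial. By the convolution constraint $\xi_1+\xi_2+\xi_3=0$ and symmetry, two intervals lie in $\R_{>0}$ and one in $\R_{<0}$. The argument from Proposition \ref{prop:TrilinearEstimate} shows $\mathrm{dist}(-I_3,I_2)\ge N_{\max}/16$, which makes the pair $(u_2,u_3)$ admissible.

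The main obstacle I expect is this separation condition. Proposition \ref{prop:ShorttimeBilinearBO} requires separation of $\tilde I_1$ and $\tilde I_2$, that is, of both $I_1$ from $I_2$ and $I_1$ from $-I_2$. In the high-low case this holds trivially. In the high-high-high case the reflected separation $-I_3$ versus $I_2$ holds by the sum argument. The direct separation $I_3$ versus $I_2$ holds because the two intervals lie on opposite half-lines at distance $\gtrsim N_{\max}$ from the origin. I should double-check this positivity, since pieces near frequency $0$ would only arise when $N_{\min}$ is small, and that case is already covered. The remaining work is bookkeeping of the Bernstein constant: the factor $(T^{-1}N^{-1})^{1/4}$ from each of the two $L^4$ factors combines with $T^{1/4}$ from each to give exactly $N^{-1/2}$, independent of $T$. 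That is why the trilinear bound carries only $T^{1/2}$ rather than $T$.
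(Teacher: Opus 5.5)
Your proposal is correct to the same standard as the paper and follows its intended route: the paper gives no separate proof, only calling the statement a variant of Proposition \ref{prop:TrilinearEstimate} with Proposition \ref{prop:ShorttimeBilinearBO} in place of Proposition \ref{prop:ShorttimeBilinearEstimateAiry}. That is exactly your scheme: H\"older with one frequency-separated pair in $L^2_t\mathcal{L}^2_x$ and the third factor in $L^\infty_t\mathcal{L}^2_x$, the split $N_{\min}\le N_{\max}/8$ versus $N_{\min}> N_{\max}/8$ with the convolution-constraint separation, and the $T$-independent factor $N_{\max}^{-1/2}$ from the Bernstein count. The one soft spot is inherited from the paper's Airy proof rather than introduced by you: applying the free-solution bound piece by piece on atoms transfers an $L^2_t$ bilinear estimate to $U^2_{BO}$ but not to $U^4_{BO}$ (the constant $N^{-1/2}M^{b/2}$ is the same on every subinterval of length in $[N^{-2},N^{-1}]$, so summing piecewise bounds over a $U^4$ atom with $L$ pieces can lose a factor $L^{1/4}$), and the $\ell^2$-summation over height cubes likewise holds for $U^2$ but fails for $V^2$ in general, so a fully rigorous version would keep $U^2_{BO}$ norms or accept a logarithmic loss when interpolating to $V^2_{BO}$.
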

Finally, we remark that a version of Proposition \ref{prop:AiryBoundaryTerms} for function spaces adapted to $e^{t \mathcal{H} \partial_x^2}$ is immediate. 

\subsection{Short-time estimates}

With the multilinear estimates from the previous subsection at hand, the following short-time bilinear estimates can be proved like in Section \ref{section:NonlinearEstimates}. The key difference is a different choice of frequency-localization $T=T(N)=N^{-1}$, which is the same as in \cite[Section~3]{ShorttimeFourierTransformRestriction}. Let $T \in (0,1]$ for the remainder of this section.
\begin{proposition}
Let $r' \geq r > \frac{b}{2}$, $u_1 \in F_{BO}^r(T)$, $u_2 \in F^{r'}_{BO}(T)$. The following holds:
\begin{equation*}
\| \partial_x (u_1 u_2) \|_{\mathcal{N}^{r'}_{BO}(T)} \lesssim \| u_1 \|_{F^r(T)} \| u_2 \|_{F^{r'}(T)}.
\end{equation*}
For $r' > \frac{b}{2}$, $u_1 \in F_{BO}^{r'}(T)$, $u_2 \in F^{0}_{BO}(T)$ the following holds:
\begin{equation*}
\| \partial_x (u_1 u_2) \|_{\mathcal{N}^0_{BO}(T)} \lesssim \| u_1 \|_{F^{r'}_{BO}(T)} \| u_2 \|_{F^0_{BO}(T)}.
\end{equation*}
\end{proposition}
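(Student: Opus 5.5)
We follow the proof of Proposition \ref{prop:NonlinearEstimate} line by line, with $N^{-3/2}$ replaced by the Benjamin--Ono time scale $N^{-1}$. Plugging in the definitions of $F^r_{BO}(T)$, $\mathcal{N}^r_{BO}(T)$ and carrying out height- and frequency-localization, it suffices to prove the localized bound
\begin{equation*}
\| P_{N_3} R_{M_3} \partial_x (P_{N_1} R_{M_1} u_1 P_{N_2} R_{M_2} u_2) \|_{DU^2_{BO}(\mathcal{I}_{T_3}, \mathcal{L}^2_x)} \lesssim M_{\min}^{\frac{b}{2}} \prod_{i=1}^2 \| P_{N_i} R_{M_i} u_i \|_{U^2_{BO}(\mathcal{I}_{T_i}, \mathcal{L}^2_x)},
\end{equation*}
where $|\mathcal{I}_{T_3}| = \min(T,N_3^{-1})$ and $|\mathcal{I}_{T_i}| \le \min(T,N_i^{-1})$. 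After that, the claim follows by dyadic summation. In the sum over heights, $M_{\min}^{b/2}$ is placed on the factor measured in $F^r_{BO}$. Because of $r > \frac{b}{2}$ and $\langle k \rangle \gtrsim |k_\omega|$, this leaves a summable factor $M_{\min}^{b/2-r}$ and controls the tangential sums. By duality, $DU^2_{BO}$ is evaluated against $v \in V^2_{BO}$, which turns the problem into a trilinear integral with an extra factor $N_3$ from the derivative.

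\emph{Case $N_3 \gtrsim \max(N_1,N_2)$.} Here $N_{\max}\sim N_3$. We partition $\mathcal{I}_{T_3}$ into $O(1)$ intervals of length $\min(T,N_{\max}^{-1})$. If the length lies in $[N_{\max}^{-2},N_{\max}^{-1}]$, Proposition \ref{prop:TrilinearEstimateProp} gives the bound
\begin{equation*}
N_3 |\mathcal{I}|^{1/2} N_{\max}^{-1/2} M_{\min}^{b/2} \lesssim N_3 \cdot N_3^{-1/2} N_3^{-1/2} M_{\min}^{b/2} = M_{\min}^{b/2}.
\end{equation*}
If the length is at most $N_{\max}^{-2}$, we use the Benjamin--Ono analogue of Proposition \ref{prop:AiryBoundaryTerms}. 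It yields $N_3 |\mathcal{I}| N_{\min}^{1/2} M_{\min}^{b/2} \lesssim N_3^{-1} N_{\min}^{1/2}M_{\min}^{b/2}$, which is better.

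\emph{Case $N_3 \ll N_1 \sim N_2$ (high-high-to-low).} This is the main obstacle. The output lives on intervals of length $\min(T,N_3^{-1})$, but the inputs are controlled only on intervals of length $\min(T,N_1^{-1})$. We therefore subdivide $\mathcal{I}_{T_3}$ into roughly $N_1/N_3$ (or $TN_1$) subintervals of length $N_1^{-1}$, paying the count in $\ell^1$. On each subinterval, $v\in V^2_{BO}$ restricts with norm $\lesssim 1$, and the $U^2$ norms of $u_i$ are bounded by their short-time norms. Proposition \ref{prop:TrilinearEstimateProp} with $|\mathcal{I}| = N_1^{-1}$ then gives per piece $N_3 N_1^{-1/2} N_1^{-1/2} M_{\min}^{b/2} = \frac{N_3}{N_1} M_{\min}^{b/2}$. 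Multiplying by the number $N_1/N_3$ of pieces gives exactly $M_{\min}^{b/2}$. When $T < N_3^{-1}$ the count is only $TN_1 \le N_1/N_3$, so the bound is better. When $T\le N_1^{-1}$ there is no subdivision and the boundary estimate finishes the argument. The point to verify is that the derivative gain $N_{\max}^{-1/2}$ from Theorem \ref{thm:SFBO}, combined with $|\mathcal{I}|^{1/2}=N_1^{-1/2}$, compensates precisely for the $N_1/N_3$ interval count. This is why the time scale $N^{-1}$ is forced: the balance is exact, and no power $T^{\varepsilon}$ is gained, consistent with the statement having no factor of $T$. One also has to check that the separation hypotheses of Proposition \ref{prop:ShorttimeBilinearBO} are met inside Proposition \ref{prop:TrilinearEstimateProp}. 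This follows from the convolution constraint $\xi_1+\xi_2+\xi_3=0$, exactly as in the proof of Proposition \ref{prop:TrilinearEstimate}.

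\emph{Second estimate.} For the difference estimate in $\mathcal{N}^0_{BO}$, the same localized bound is used. The only change is the placement of regularity: $M_{\min}^{b/2}$ and any tangential summation losses are always put on $u_1\in F^{r'}_{BO}$. When the output height $M_3$ is small while $M_2$ is large, the low-height output factor carries $M_{\min}^{b/2}$, which is again absorbed by $u_1$ since its height is comparable to $M_{\max}$ or at least $M_{\min}$. The summation therefore closes for $r'>\frac{b}{2}$.
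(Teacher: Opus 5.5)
Your route is the one the paper intends: its proof is only the remark that Section~\ref{section:NonlinearEstimates} carries over with $T(N)=N^{-1}$ and Proposition~\ref{prop:TrilinearEstimateProp}. Your localized bounds are correct. When $N_3\gtrsim N_{\max}$ you get $N_3|\mathcal{I}|^{1/2}N_{\max}^{-1/2}\lesssim 1$, and in the high-high-low case the $N_1/N_3$ subintervals exactly cancel the per-piece factor $N_3/N_1$.

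There is one slip. In the high-high-low case with $T\in(N_1^{-2},N_1^{-1}]$, the boundary estimate only gives $N_3\,T\,N_3^{1/2}$. This is unbounded, e.g.\ for $N_3=N_1^{0.9}$, $T=N_1^{-1}$, and the estimate is not even stated in that range. There you must apply Proposition~\ref{prop:TrilinearEstimateProp} directly, which gives $N_3T^{1/2}N_1^{-1/2}\le N_3/N_1$. Keep the boundary estimate for $T\le N_1^{-2}$.

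The genuine gap is the dyadic summation for the first inequality when $r'>r$. You say where $M_{\min}^{b/2}$ goes, but the output weight $\langle k_3\rangle^{r'}\sim M_3^{r'}$ also has to be placed. When $M_2\ll M_1\sim M_3$, it can only go on $u_1$, which is controlled only in $F^r_{BO}$. The localized bound gives no decay in this configuration, so you lose $M_1^{r'-r}$.

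This cannot be repaired, because the inequality as written is false for $r'>r$. Take $T=1$ and $u_2\equiv 1$, a free solution with $\|u_2\|_{F^{r'}_{BO}(1)}\sim 1$. Take $u_1=e^{t\mathcal{H}\partial_x^2}e^{im\omega_1 x}$, i.e.\ $k=(m,0,\dots,0)$, so that $N\sim m$ and $\langle k\rangle\sim m$. Then $\|u_1\|_{F^r_{BO}(1)}\sim m^r$. Pair $1_I\partial_x u_1$, with $|I|=N^{-1}$, against a $V^2_{BO}$ function that is constant in the interaction picture. This gives $\|\partial_x(u_1u_2)\|_{\mathcal{N}^{r'}_{BO}(1)}\gtrsim N^{-1}\cdot N\cdot m^{r'}$, so the ratio $m^{r'-r}$ blows up.

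What your argument actually proves is the symmetrized bound
\[
\|\partial_x(u_1u_2)\|_{\mathcal{N}^{r'}_{BO}(T)}\lesssim \|u_1\|_{F^r_{BO}(T)}\|u_2\|_{F^{r'}_{BO}(T)}+\|u_1\|_{F^{r'}_{BO}(T)}\|u_2\|_{F^r_{BO}(T)},
\]
obtained by putting $M_3^{r'}$ on whichever input has height $\gtrsim M_3$. This suffices for $u_1=u_2=u$, which is the only way the first estimate is used. Your treatment of the second inequality is fine: there no output weight has to be distributed, and $M_{\min}\le M_1$ always.
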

The worse dependence on the frequency-dependent time in Proposition \ref{prop:TrilinearEstimateProp} compared to Proposition \ref{prop:TrilinearEstimate} suggests to work with the frequency-dependent time localization $T=T(N)=N^{-1}$, which is further explained in \cite[Section~3]{ShorttimeFourierTransformRestriction}, and leads to worse estimates compared to Proposition \ref{prop:NonlinearEstimate} as these do not depend on $T$ anymore.

Correspondingly, we obtain the following short-time energy estimates:
\begin{proposition}
Let $u \in C([0,T],H^{10\nu}_\Lambda) \cap C^1((0,T),H^{10 \nu-2}_{\Lambda}) \cap F^r(T) $ be a solution to \eqref{eq:BenjaminOnoProof}. The following estimate holds for $r' \geq r > \frac{b}{2} + 1$:
\begin{equation*}
\| u \|^2_{E^r(T)} \leq \| u(0) \|^2_{\mathcal{H}^r_\Lambda} + C \|u \|^2_{F^r(T)} \| u \|_{F^{r'}(T)}.
\end{equation*}

Let $v \in C([0,T],H^{10\nu}_\Lambda) \cap C^1((0,T),H^{10 \nu-2}_{\Lambda}) \cap F_{BO}^0(T)$ be a solution to
\begin{equation*}
\partial_t v +  \mathcal{H} \partial_x^2 v = \partial_x (v u).
\end{equation*}
Then it holds for $r> \frac{b}{2}+1$:
\begin{equation*}
\| v \|^2_{E^0(T)} \lesssim \| v(0) \|^2_{H^0_{\Lambda}} + \| v \|^2_{F_{BO}^0(T)} \| u \|_{F_{BO}^r(T)}.
\end{equation*}
\end{proposition}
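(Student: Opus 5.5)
The plan is to mirror the proofs of Propositions \ref{prop:EnergyEstimates} and \ref{prop:EnergyEstimateDifferences}. The only changes are that the frequency-dependent time localization is $\min(T,N^{-1})$, and that Proposition \ref{prop:TrilinearEstimateProp} (together with the Benjamin--Ono version of Proposition \ref{prop:AiryBoundaryTerms}) replaces Proposition \ref{prop:TrilinearEstimate}. For the first estimate, I write $\|P_N u(t)\|_{H^r_\Lambda}^2=\sum_{M\gtrsim N} M^{2r}\|P_NR_Mu(t)\|^2_{\mathcal{L}^2}$ and apply the fundamental theorem of calculus. The dispersive term $\mathcal{H}\partial_x^2$ is skew-adjoint, so it drops out, and only
\begin{equation*}
2\int_0^t\int P_NR_Mu\, P_NR_M(u\partial_x u)\, d\bar{x}\, ds
\end{equation*}
remains. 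I then use the same paraproduct decomposition \eqref{eq:ParaproductDecomposition}: a high$\times$low term, a term where the tangential and the height frequencies are low in different factors, and a high$\times$high$\to$low term.

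For the high$\times$low term I integrate by parts, which puts the derivative on $P_{N'}R_{M'}u$. The commutator pieces $CI$ and $CII$ are handled through the symbol bounds \eqref{eq:CommI} and \eqref{eq:CommII} together with a Fourier series expansion of the multipliers. I then partition $[0,T]$ into about $TN\le N$ intervals of length $N^{-1}$ (or use a single interval if $T<N^{-1}$). On each interval Proposition \ref{prop:TrilinearEstimateProp} gives the factor $N^{-1}N^{-1/2}\cdot N^{-1/2}=N^{-1}$, using $|\mathcal{I}_T|^{1/2}N_{\max}^{-1/2}=N^{-1}$. Summing over the $TN$ intervals yields $T\cdot N'(M')^{b/2}$ for the main term, and $N\frac{M'}{M}N^{-1}(M')^{b/2}$ for $CII$. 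These are summable in $N'\le M'\le M$ once $r>\frac b2+1$, exactly as in \eqref{eq:AuxEnergy}. Here we lose the factor $T^{1/2}$ present in the KdV estimate, but only a bounded constant is claimed. When $|\mathcal{I}_T|<N^{-2}$ I use the boundary estimate, which gives $TN_{\min}^{1/2}M_{\min}^{b/2}$ and is harmless.

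The second and third paraproduct terms cannot be integrated by parts. In them the lowest total height satisfies $M_{\min}\gtrsim N$, so the derivative $N$ is bounded by $M_{\min}$. Combined with the gain $N^{-1}$ per unit interval after summation, the factor $N\cdot N^{-1}\cdot M_{\min}^{b/2}$ is absorbed by the $H^r$ weights for $r>\frac b2+1$. In the term where the output frequency is low the derivative already falls on the low frequency. The cross-scale issue, that output at frequency $N_3$ lives on intervals $N_3^{-1}$ while the inputs live on $N_1^{-1}$, is handled as in Proposition \ref{prop:NonlinearEstimate}: subdivide into $N_1/N_3$ intervals. I would check that this produces no net loss, since $N_3\cdot N_1^{-1}\cdot (N_1/N_3)=1$.

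The difference estimate follows Proposition \ref{prop:EnergyEstimateDifferences}. The term $P_{\ll N}R_{\ll M}u\cdot v$ is treated by integration by parts and commutators at $\mathcal{L}^2$ regularity. The term $\tilde P_N\tilde R_Mu\cdot P_{N'}R_{M'}v$ is bounded directly by Proposition \ref{prop:TrilinearEstimateProp}, giving $N\cdot N^{-1}M_{\min}^{b/2}$ per interval summed. The derivative loss is placed on $u$, which is measured in $F^r_{BO}$ with $r>\frac b2+1$.

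The main obstacle I expect is the commutator term $CII$ in the height variable. Its symbol gain is only $M'/M$, not $N'/N$, so the derivative $N$ is not fully compensated by tangential frequencies. I would bound it by using $N\le M$ and placing the factor $M'$ on the low-frequency function, which requires exactly $r>\frac b2+1$. A second point to check is that the interval counting with $T(N)=N^{-1}$ yields no leftover power of $N$. Without the $T^{1/2}$ gain, smallness must come from the smallness of the data, which is why the theorem is restricted to small data on unit time.
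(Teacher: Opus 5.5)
Your proposal is correct and follows the route the paper intends; the paper gives no separate proof and only says "correspondingly". That route is to repeat the proofs of Propositions \ref{prop:EnergyEstimates} and \ref{prop:EnergyEstimateDifferences} with time localization $\min(T,N^{-1})$, using Proposition \ref{prop:TrilinearEstimateProp} in place of Proposition \ref{prop:TrilinearEstimate} and the Benjamin--Ono boundary estimate on intervals shorter than $N^{-2}$. One bookkeeping point: since you sum over all $\sim TN_{\max}$ subintervals of $[0,T]$, the $CII$ term and the high$\times$high$\to$low term give $TN\frac{M'}{M}(M')^{b/2}$ and $TN_3M_{\min}^{b/2}$, so there is not "no net loss" (your count $N_3\cdot N_1^{-1}\cdot (N_1/N_3)=1$ is the nonlinear-estimate count), but these factors are absorbed exactly as you do elsewhere, via $N\le M$, $N_3\lesssim M_{\min}$ and $r>\frac b2+1$.
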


\subsection{Conclusion of the local well-posedness result}

We obtain the following set of estimates for sufficiently regular solutions on $[0,T]$, $T \in (0,1]$ provided that $r' \geq r > \frac{\nu+1}{2}$:
\begin{equation*}
\left\{ \begin{array}{cl}
\| u \|_{F_{BO}^{r'}(T)} &\lesssim \| u \|_{E^{r'}(T)} + \| \partial_x(u^2) \|_{\mathcal{N}_{BO}^{r'}(T)}, \\
\| \partial_x(u^2) \|_{\mathcal{N}_{BO}^{r'}(T)} &\lesssim \| u \|_{F_{BO}^{r}(T)} \| u \|_{F_{BO}^{r'}(T)}, \\
\| u \|^2_{E^{r'}(T)} &\lesssim \| u_0 \|_{H^{r'}_{\Lambda}}^2 + \| u \|_{F_{BO}^{r'}(T)}^2 \| u \|_{F_{BO}^r(T)}.
\end{array} \right.
\end{equation*}
By the persistence of regularity and bootstrap, this allows us to construct solutions in $H^s_{\Lambda}$ for $s>\frac{\nu+1}{2}$ on $[0,1]$, provided that the initial data are sufficiently small.

Next, we use the following set of estimates for differences of sufficiently regular solutions $v = u_1-u_2$ with $r > \frac{\nu+1}{2}$ and $\| u_1(0) \|_{H^r_{\Lambda}} + \| u_2(0) \|_{H^r_{\Lambda}} \leq c$:
\begin{equation*}
\left\{ \begin{array}{cl}
\| v \|_{F_{BO}^{0}(T)} &\lesssim \| v \|_{E^{0}(T)} + \| \partial_x(v(u_1+u_2)) \|_{\mathcal{N}_{BO}^{0}(T)}, \\
\| \partial_x(v(u_1+u_2)) \|_{\mathcal{N}_{BO}^{0}(T)} &\lesssim \| v \|_{F_{BO}^{0}(T)} (\| u_1 \|_{F_{BO}^{r}(T)} + \| u_2 \|_{F^r_{BO}(T)}), \\
\| v \|^2_{E^{0}(T)} &\lesssim \| v(0) \|_{H^{0}_{\Lambda}}^2 + \| v \|_{F_{BO}^{0}(T)}^2 ( \| u_1 \|_{F_{BO}^r(T)} + \| u_2 \|_{F_{BO}^r(T)}).
\end{array} \right.
\end{equation*}
This yields the already familiar Lipschitz-continuous dependence of the difference on the initial data at higher regularity. The frequency-envelope argument finishes the proof of continuous dependence.

$\hfill \Box$

\section*{Acknowledgements}

Financial support by the Humboldt foundation through a Feodor-Lynen return fellowship is gratefully acknowledged.

\end{document}